\def\tcb{\textcolor{black}}
\def\rcb{\textcolor{black}}
\newcommand{\interior}[1]{%
  {\kern0pt#1}^{\mathrm{o}}%
}
\newcommand\preceqdot{\mathrel{\ooalign{$\preceq$\cr
  \hidewidth\raise0.125ex\hbox{$\cdot\mkern0.5mu$}\cr}}}
\newcommand\precdott{\mathrel{\ooalign{$\prec$\cr
  \hidewidth\raise0.015ex\hbox{$\cdot\mkern0.5mu$}\cr}}} 
\title{Sections and Chapters}
\newcolumntype{L}[1]{>{\raggedright\let\newline\\\arraybackslash\hspace{0pt}}m{#1}}
\newcolumntype{C}[1]{>{\centering\let\newline\\\arraybackslash\hspace{0pt}}m{#1}}
\newcolumntype{R}[1]{>{\raggedleft\let\newline\\\arraybackslash\hspace{0pt}}m{#1}}
\newcommand\Item[1][]{%
  \ifx\relax#1\relax  \item \else \item[#1] \fi
  \abovedisplayskip=0pt\abovedisplayshortskip=0pt~\vspace*{-\baselineskip}}
\newtheorem{thm}{Theorem}[section]
\newtheorem{lemma}[thm]{Lemma}
\newtheorem{prop}[thm]{Proposition}
\newtheorem{cor}[thm]{Corollary}
\newtheorem{conjecture}[thm]{Conjecture}
\newtheorem{lemmax}{Lemma}
\newtheorem{propx}{Proposition}
\newtheorem{example}[thm]{Example}
\newtheorem{remark}[thm]{Remark}
\newtheorem{question}[thm]{Question}
\newtheorem{question
}{Question}
\def\0{{\bf 0}}
\def\P{{\bf P}}
\def\R{{\bf R}}
\def\Z{{\bf Z}}
\def\keywords{\xdef\@thefnmark{}\@footnotetext}
  \title {\tcb{Exact expressions for the maximal probability that all $k$-wise independent bits are 1}}
 \date{}
\author{D. Berend\footnote{
Departments of Mathematics and Computer Science, Ben-Gurion
University, Beer Sheva 84105, Israel.
E-mail: berend@cs.bgu.ac.il }
\footnote{Research supported in part by
the Milken Families Foundation Chair in
Mathematics.}
\and
P. A. Ernst\footnote{Department of Mathematics, Imperial College London, United Kingdom. E-mail: p.ernst@imperial.ac.uk}
\and A. Kontorovich\footnote{Department of Computer Science, Ben-Gurion
University, Beer Sheva 84105, Israel. E-mail: karyeh@cs.bgu.ac.il}
\and
R. Kumar\footnote{Department of Mathematics, Ben-Gurion
University, Beer Sheva 84105, Israel. E-mail: kumarr@post.bgu.ac.il}
}
\begin{document}
\maketitle
\keywords{2020 {Mathematics Subject Classification:} Primary 60E15 ; Secondary 62E99, 90C05, 90C08.}
\keywords{{Key words and phrases. Bonferroni inequalities, linear programming problem, lexicographic order.}}
\begin{abstract}
\tcb{Let $M(n, k, p)$ denote the maximum probability of the event $X_1 = X_2 = \cdots = X_n=1$ under a $k$-wise independent distribution whose marginals are Bernoulli random variables with mean $p$. A long-standing question is to calculate $M(n, k, p)$ for all values of $n,k,p$. This question has been partially addressed by several authors, primarily with the goal of answering asymptotic questions. The present paper focuses on obtaining exact expressions for this probability. To this end, we provide closed-form formulas of $M(n,k,p)$ for $p$ near 0 as well as $p$ near 1.}


\end{abstract}
\section{Introduction}
\tcb{Let $X_1, X_2,\ldots, X_n$ be a sequence of $n$ random bits, modelled as $n$ Bernoulli random variables with mean $p$ for some $p \in [0,1]$.} To specify the joint distribution of the random variables, one needs to specify the $2^n$ probabilities
$$\P(X_1= \varepsilon_1, X_2= \varepsilon_2,\ldots, X_n= \varepsilon_n), \qquad (\varepsilon_1,\varepsilon_2,\ldots, \varepsilon_n)\in \{0,1\}^n.$$
\tcb{The simplest setting for calculating the joint distribution occurs when the $n$ variables are independent}. Often, however, there is some form of weak independence between the $n$ bits. It then becomes natural to inquire what this weak dependence implies about the distribution of various functions of the $X_i$-s. This question was studied by Benjamini et al. \cite{benjamini-gurel-gurevich-peled} with respect to several natural functions. For simplicity, Benjamini et al. \cite{benjamini-gurel-gurevich-peled} assumed that all $X_i$-s have the same marginals, namely $\P(X_i=1)=p$, $1\leq i\leq n$, for some $p\in[0,1]$. One of the functions they studied is the product $\prod_{i=1}^n X_i$. \tcb{A vector $(X_1,\ldots,X_n)$ of random variables is $k$-wise independent if each subset of $k$ of the random variables is independent}. Denote by $A(n,k,p)$ the space of all probability measures on $\{0,1\}^n$ that make the $X_i$-s $k$-wise independent \tcb{with Bernoulli mean $p$ marginals}. In other words, $\P\in A(n,k,p)$ if 
$$\P(X_{i_1}= \cdots =X_{i_s}=1)= p^s,\qquad s\leq k,\, 1\leq i_1<\cdots <i_s\leq n.$$
\tcb{The authors of Benjamini et al. \cite{benjamini-gurel-gurevich-peled}} were especially interested in the maximum $M(n,k,p)$ of the functional $\P(X_1=X_2=\cdots= X_n=1)$ over the space $A(n,k,p)$. \tcb{Roughly speaking, they asked how large $k$ should be in order to ensure that this maximum is not much larger than the value of $p^n$ for the i.i.d. case.} This study was continued in \cite{peled-yadin-yehudayoff}, where the function $M(n,k,p)$ was specifically studied.

The main focus of \cite{benjamini-gurel-gurevich-peled} and \cite{peled-yadin-yehudayoff} \tcb{was to obtain both explicit bounds and asymptotic results} for $M(n,k,p)$.  For example, it was shown in \cite[Theorem 23]{benjamini-gurel-gurevich-peled} that for even $k$
$$M(n,k,p)\leq \frac{p^n}{\mathbb{P}(\textup{Bin}(n,1-p)\leq k/2)},$$
where $\mathbb{P}$ is the fully independent distribution on $n$ bits. Lower bounds on $M(n,k,p)$ were given in \cite[Theorem 1.1]{peled-yadin-yehudayoff}. \\
\indent \tcb{The main interest of the present paper is in obtaining exact results for $M(n,k,p)$.} Benjamini et al. \cite{benjamini-gurel-gurevich-peled} note that, for $k=2$ and $k=3$, closed-form expressions for $M(n,k,p)$ over the entire interval $[0,1]$ \tcb{can be obtained by invoking the closed-form results in \cite{Prekopa1}}  (see Propositions \ref{prop for M(n,2,p) except finitely many p} and \ref{prop for M(n,3,p) except finitely many p} below). An additional closed-form result, established by Benjamini et al.\cite[Proposition 35]{benjamini-gurel-gurevich-peled}, is that
\begin{equation}\label{results section tight bound for small p}
M(n,k,p)= p^k,\qquad 0\leq p\leq \frac{1}{n-1}.\end{equation} \tcb{We shall generalize this result in Theorem \ref{resuls sec, theorem on first half} of the present paper.}
\tcb{To the best of our knowledge, the result in \eqref{results section tight bound for small p} and the exact results for $M(n,2,p)$ and $M(n,3,p)$ constitute the only exact results in the literature for $M(n,k,p)$.}

\tcb{The primary purpose of the present paper is to help close this gap by obtaining further exact expressions for $M(n,k,p)$}. \tcb{This paper's key contributions are the closed-form formulas of $M(n,k,p)$ for $p$ near 0 (Theorems \ref{resuls sec, theorem on first half}, \ref{resuls sec, theorem on second interval 1}) and closed-form formulas for $p$ near 1 (Theorems \ref{resuls sec, theorem on last half}, \ref{resuls sec, theorem on second last interval 1}).} We also obtain tight upper bounds on $M(n,k,p)$ (see Theorem \ref{resuls sec, theorem on upper bounds}). \tcb{\rcb{The second purpose of this paper is to provide observations (based on numerical studies) about $M(n,k,p)$ which we conjecture will hold in general}. Our hope is that these observations will help future researchers with the very difficult task of providing exact formulas for $M(n,k,p)$ over the whole interval $[0,1]$.}

\tcb{A key motivation for the study of $k$-wise independent distributions arises from the extensive literature in computer science regarding derandomization of algorithms. The objective of derandomization is to analyze how a given algorithm behaves on $k$-wise independent data in comparison to how it would behave on completely independent data. The use of $k$-wise independent distributions for derandomization of algorithms was pioneered by \cite{Noga1986,Chor,Karp,Luby1985}. We refer the reader to \cite{Noga, Andrei, Kaplan, Karloff, Luby, Schmidt, Censor} for a more extensive overview of this literature.}

\tcb{The remainder of the paper is organized as follows.} In Section \ref{result section} we state our main results. In Section \ref{preliminaries k-wise} we recall \tcb{preliminaries needed for the main proofs}. In Section \ref{k-wise proof section} we provide these proofs. \tcb{\rcb{In Section \ref{numerical data}, \tcb{we conduct numerical studies and present our key observations as eight open conjectures.}}}

\section{The Main Results}\label{result section}
Let \tcb{$1\leq k\leq n$} be fixed integers and $p\in [0,1]$. Let $A(n,k,p)$ be the set of all laws~$\P$ making the collection $\{X_i\,: \, 1\leq i\leq n\}$ into $k$-wise  random bits with mean $p$. Let $M(n,k,p)$ be the maximum value of $\P(X_1=\cdots=X_n=1)$ over all laws $\P\in A(n,k,p)$. A law $\P\in A(n,k,p)$ is an {\it optimizing measure} for $(n,k,p)$ if it attains the maximum $M(n,k,p)$. 

Clearly, $M(n,k,0)=0$ and $M(n,1,p)=p$. Also, $M(n,k,1)=1$ and $M(n,n,p)=p^n$. Thus, we will implicitly assume that $2\leq k\leq n-1$ and $p\in (0,1)$. We first notice that the set $A(n,k,p)$ is convex since, if $\P_1$ and $\P_2$ both make $\{X_i\, :\, 1\leq i\leq n\}$ into $k$-wise random bits with mean $p$, then so does $\alpha \P_1 + (1-\alpha) \P_2$. As $A(n,k,p)$ is compact, there always exists an optimizing measure for $(n,k,p)$. The following lemma shows that we can restrict the search for an optimizing measure to a small subset of $A(n,k,p)$. For self-containedness, we also provide a proof. 

\begin{lemmax}\emph{\cite[p.507]{peled-yadin-yehudayoff}}\label{result sec, lemma for exchangeable measure}
There exists an optimizing measure for $(n,k,p)$ that makes $\{X_i\, :\, 1\leq i\leq n\}$ exchangeable.
\end{lemmax}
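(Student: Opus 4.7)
The plan is to use a standard symmetrization argument. Start with an arbitrary optimizing measure $\P^* \in A(n,k,p)$, which exists by the compactness of $A(n,k,p)$ noted in the text. For each permutation $\sigma \in S_n$, define $\P^*_\sigma$ to be the pushforward of $\P^*$ under the coordinate-permuting map $(\varepsilon_1,\ldots,\varepsilon_n) \mapsto (\varepsilon_{\sigma(1)},\ldots,\varepsilon_{\sigma(n)})$ on $\{0,1\}^n$, and then form the symmetrized measure
$$\widetilde{\P} \;=\; \frac{1}{n!} \sum_{\sigma \in S_n} \P^*_\sigma.$$
The goal is to show that $\widetilde{\P}$ lies in $A(n,k,p)$, attains the maximum $M(n,k,p)$, and makes $\{X_1,\ldots,X_n\}$ exchangeable.

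First I would check that each $\P^*_\sigma$ belongs to $A(n,k,p)$. This is immediate from the definition: the $k$-wise independence condition
$$\P(X_{i_1}=\cdots=X_{i_s}=1) = p^s, \qquad s\leq k,\ 1\leq i_1<\cdots<i_s\leq n,$$
only constrains the values of the probabilities on subsets of size at most $k$, and is symmetric under any relabeling of the coordinates; applying $\sigma$ simply permutes which indices play which roles. Since $A(n,k,p)$ is convex (as already noted in the excerpt), the average $\widetilde{\P}$ also lies in $A(n,k,p)$.

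Next I would verify that $\widetilde{\P}$ is still optimizing. The event $\{X_1=X_2=\cdots=X_n=1\}$ corresponds to the single outcome $(1,1,\ldots,1)\in\{0,1\}^n$, which is fixed by every permutation of coordinates. Consequently $\P^*_\sigma(X_1=\cdots=X_n=1) = \P^*(X_1=\cdots=X_n=1) = M(n,k,p)$ for every $\sigma$, and averaging gives $\widetilde{\P}(X_1=\cdots=X_n=1) = M(n,k,p)$.

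Finally, exchangeability of $\widetilde{\P}$ follows from the averaging: for any permutation $\tau \in S_n$, the pushforward $\widetilde{\P}_\tau$ equals $\frac{1}{n!}\sum_\sigma \P^*_{\sigma\tau}$, which is just a reindexing of the same sum and hence equals $\widetilde{\P}$. Thus $\widetilde{\P}$ is an optimizing exchangeable measure, completing the proof. There is no real obstacle here; the argument is a one-line symmetrization and the only thing to be careful about is noting that the target event is invariant under permutations of coordinates, which is precisely what makes the averaging preserve optimality.
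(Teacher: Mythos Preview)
Your proof is correct and takes essentially the same approach as the paper: both start from an optimizing measure and symmetrize by averaging over permutations, using that $k$-wise independence with identical marginals is preserved under symmetrization and that the target event $\{X_1=\cdots=X_n=1\}$ (equivalently $\{\sum_i X_i=n\}$) is permutation-invariant. Your write-up is simply more explicit than the paper's terse version.
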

Recall that a sequence $C_1,C_2,\ldots,C_n$ of events is {\it exchangeable} if, for every $1\leq t\leq n$, the probabilities $\P(C_{i_1}\cap C_{i_2}\cap \cdots \cap C_{i_t})$ are the same for every $1\leq i_1<i_2<\cdots<i_t\leq n$.
\begin{proof}
Let $\P$ be an optimizing measure for $(n,k,p)$. Under the special case where all $k$-wise marginals have the same law, $k$-wise independence is closed under symmetrization. It follows that there is a measure $\P'$ that makes $\{X_i\,:\, 1\leq i\leq n\}$ exchangeable and gives $\sum_{i=1}^n X_i$ the same law as does $\P$. Thus $\P'(\sum_{i=1}^n X_i=n) =\P(\sum_{i=1}^n X_i=n)= M(n,k,p)$, so that $\P'$ is an optimizing measure for $(n,k,p)$.    
\end{proof}
By Lemma \ref{result sec, lemma for exchangeable measure}, the problem is the same if we restrict to exchangeable measures. Let $A_E(n,k,p)$ be the subset of $A(n,k,p)$, consisting all exchangeable measures. An exchangeable measure~$\P$ is uniquely 
determined by the quantities
\begin{equation}\label{result sec, equation of vi}
v_i= \P\left(\sum_{j=1}^i X_j=i,\sum_{j=i+1}^n X_j=0\right), \qquad 0\leq i\leq n.    
\end{equation}
By symmetry, for any set of $i$ bits, $v_i$ is the probability of the event that these bits are all~1 and the rest are all 0. There are $\binom{n}{i}$ such subsets, and hence $\sum_{i=0}^n \binom{n}{i}v_i=1$.

By contrast,
\begin{equation}\label{pre, sec, equation sum of vi equal to pi in the first form}
\P\left(\sum_{j=1}^i X_j=i\right)= \sum_{j=i}^n\binom{n-i}{j-i}v_j=p^i,\qquad 0\leq i\leq k,
\end{equation}
but this relation is no longer (necessarily) true for $i>k$.
Thus, the set of all $k$-wise independent distributions, whose marginals are all defined by $\P(X_i=1)=p$, obeys the $k+1$ equality constraints \tcb{in} \eqref{pre, sec, equation sum of vi equal to pi in the first form}, in addition to the non-negativity constraints. 
A~subset $ I\subseteq \{0,1,\ldots,n\}$ is a {\it weak support} of an exchangeable measure $\P$ if $v_i=0$ for all $i\in \{0,1,\ldots,n\}- I$. The set $I$ is the {\it support} of $\P$ if $v_i>0$ for each $i\in I$ and  $v_i=0$ for all $i\in \{0,1,\ldots,n\}- I$.  
The following two propositions provide explicit formulas for $M(n,2,p)$ and $M(n,3,p)$. These formulas were obtained in \cite{Prekopa1} (see also \cite{benjamini-gurel-gurevich-peled} and \cite{Arjun}). 
For completeness, we also present the corresponding optimizing measures as in \cite{Arjun}.

\begin{propx}\emph{\cite[Theorem 4.1]{Arjun}}\label{prop for M(n,2,p) except finitely many p}
Let $n\geq 3$ and $p\in \left[\frac{j-1}{n-1},\frac{j}{n-1}\right]$ for some $1\leq j\leq n-1$. Then,
\begin{equation}\label{result sec, equation of M(n,2,p) in the lemma}
M(n,2,p)= \frac{1}{\binom{n-j+1}{2}}\left(\binom{n}{2}p^2 -n(j-1)p + \binom{j}{2}\right).
\end{equation}
Moreover, the optimizing distribution is supported on $\{j-1,j,n\}$ and given by
{ \everymath={\displaystyle}
\begin{equation}\label{result sec, equation distribution of v(j) in lemma}
 \begin{split}
 v_i=\begin{cases}
     \frac{1}{\binom{n-1}{j-1}}\cdot (1-p)\cdot (j+p-np),&\qquad i=j-1,\\
     \\
     \frac{1}{\binom{n-1}{j}} \cdot (1-p)\cdot (np-p-j+1),&\qquad i=j,\\
     \\
  \frac{1}{\binom{n-j+1}{2}}\cdot\left(\binom{n}{2}p^2 -n(j-1)p + \binom{j}{2}\right),&\qquad i=n.
 \end{cases} 
 \end{split}   
\end{equation}
}
\end{propx}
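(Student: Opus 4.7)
The plan is to cast the maximization of $\P(X_1=\cdots=X_n=1)=v_n$ over $A_E(n,2,p)$ as a linear program. By Lemma \ref{result sec, lemma for exchangeable measure} we may restrict to exchangeable measures, parameterized by $(v_0,\ldots,v_n)$. Applying \eqref{pre, sec, equation sum of vi equal to pi in the first form} at $i=0,1,2$ gives three linear equality constraints (normalization, mean, pairwise product), and together with the non-negativity constraints $v_i\ge 0$ this is an LP with $n+1$ variables and three equalities. Standard LP theory then guarantees that an optimal extreme point is supported on a set $I\subseteq\{0,1,\ldots,n\}$ of size at most three.

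I would first establish the lower bound by exhibiting the claimed optimizer. For $p\in[\tfrac{j-1}{n-1},\tfrac{j}{n-1}]$, fix the candidate support $I=\{j-1,j,n\}$ and verify, via elementary binomial identities, that the triple in \eqref{result sec, equation distribution of v(j) in lemma} solves the three equality constraints. The factorizations displayed there make non-negativity of $v_{j-1},v_j,v_n$ transparent on the entire $p$-interval, with $v_j$ vanishing at the left endpoint $p=\tfrac{j-1}{n-1}$ and $v_{j-1}$ at the right endpoint $p=\tfrac{j}{n-1}$. This yields $M(n,2,p)\ge$ RHS of \eqref{result sec, equation of M(n,2,p) in the lemma}.

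For the matching upper bound I would invoke LP duality. The dual seeks $\alpha,\beta,\gamma\in\R$ minimizing $\alpha+\beta p+\gamma p^2$ subject to
\[
\alpha\binom{n}{i}+\beta\binom{n-1}{i-1}+\gamma\binom{n-2}{i-2}\ \ge\ \delta_{i,n},\qquad 0\le i\le n.
\]
The crucial observation is that for $0\le i\le n-1$ the left-hand side equals $\binom{n}{i}\cdot q(i)$, where $q(x):=\alpha+\tfrac{\beta}{n}x+\tfrac{\gamma}{n(n-1)}x(x-1)$ is a quadratic in $x$. I would choose $(\alpha,\beta,\gamma)$ so that
\[
q(x)\ =\ \frac{(x-(j-1))(x-j)}{(n-j+1)(n-j)}.
\]
This quadratic vanishes at the two integers $j-1$ and $j$ and is strictly positive at every other integer in $[0,n-1]$, so the dual inequalities hold for $i\le n-1$; the identity $q(n)=1$ combined with $\binom{n}{n}=1$ handles $i=n$ with equality. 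A short expansion shows that the dual objective $\alpha+\beta p+\gamma p^2$ then simplifies to $\frac{1}{\binom{n-j+1}{2}}(\binom{n}{2}p^2-n(j-1)p+\binom{j}{2})$, matching the primal value. Complementary slackness with the primal support $\{j-1,j,n\}$ certifies optimality.

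I anticipate the main obstacle to be identifying the correct support $\{j-1,j,n\}$ for each of the $n-1$ sub-intervals of $[0,1]$. Once this combinatorial choice is pinned down by the position of $p$ relative to the breakpoints $\tfrac{\ell}{n-1}$, the dual-certifying quadratic $q$ is forced (it must vanish on the two support points in $\{0,\ldots,n-1\}$ and take value $1$ at $n$), and the remaining verifications are routine binomial algebra.
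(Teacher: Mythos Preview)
Your proposal is correct and rests on the same LP duality with optimal support $\{j-1,j,n\}$ that the paper uses, but the execution differs. The paper invokes the Boros--Pr{\'e}kopa machinery (Theorems~\ref{pre sec, theorem on I} and~\ref{pre sec, theorem on B(I) tight bound}) as a black box: it writes down the inverse matrix $B(I)^{-1}$ for $I=\{j-1,j,n\}$, reads off the $v_i$ from $v_i=\tfrac{1}{\binom{n}{i}}\sum_{l}b_{i,l}\binom{n}{l}p^l$, checks primal feasibility on the interval, and then cites Theorem~\ref{pre sec, theorem on B(I) tight bound} for optimality. You instead build the dual certificate from scratch, via the observation that the $i$-th dual constraint reads $\binom{n}{i}q(i)\ge\delta_{i,n}$ for a quadratic $q$, and that the choice $q(x)=\frac{(x-(j-1))(x-j)}{(n-j+1)(n-j)}$ forces $q(j-1)=q(j)=0$, $q(n)=1$, and $q(i)>0$ at every other integer in $\{0,\ldots,n-1\}$. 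Your route is more elementary and fully self-contained (no appeal to the Pr{\'e}kopa theorems), and it makes transparent \emph{why} the pair-plus-$n$ structure is the right dual basis. The paper's route, by contrast, treats the $k=2$ case as an instance of the general framework already set up in Section~\ref{subsection on LP}, which is what it needs for the higher-$k$ theorems; your explicit quadratic construction does not extend so directly to $k\ge4$. One small point: the non-negativity of $v_n$ is not quite ``transparent'' from the displayed factorization, but it follows immediately once you note that the discriminant of $\binom{n}{2}p^2-n(j-1)p+\binom{j}{2}$ is $n(j-1)(j-n)\le 0$.
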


\tcb{For self-containedness, we provide a proof of the proposition in Section \ref{k-wise proof section}}. Once we know the value of $M(n,k,p)$ for a certain even $k$ (for all $n$), \tcb{we will also know its value when $k$ is replaced by $k+1$}. In fact (see \cite[p.504]{peled-yadin-yehudayoff})
\begin{equation}\label{intro, equation M(n,k,p) for odd k}
M(n,k+1,p)= p M(n-1,k,p),  \qquad  k\equiv 0 \, (\textup{mod}\, 2),\, n\geq k-2,\, p\in [0,1].  
\end{equation}
In particular, Proposition \ref{prop for M(n,2,p) except finitely many p} readily yields a formula for $M(n,3,p)$. In Proposition \ref{prop for M(n,3,p) except finitely many p} we also provide the optimizing measure.
\begin{propx}\label{prop for M(n,3,p) except finitely many p}
Let $n\geq 4$ and $p\in \left[\frac{j-1}{n-2},\frac{j}{n-2}\right]$ for some $1\leq j\leq n-1$. Then, 
\begin{equation}\label{result sec, equation of M(n,3,p) in the lemma}
M(n,3,p)=  p\cdot \frac{1}{\binom{n-j}{2}}\left(\binom{n-1}{2}\cdot p^2 -(n-1)(j-1)p + \binom{j}{2}\right).
\end{equation}
Moreover, the optimizing distribution is supported on $\{0, j,j+1,n\}$ and given by
{ \everymath={\displaystyle}
\begin{equation}\label{result sec, equation distribution of v(j) in M(n,3,p) lemma}
\begin{split}
 w_i=\begin{cases}
\frac{1}{\binom{j+1}{2}}\cdot (1-p)\left(\binom{n-1}{2}p^2-j(n-1)p + \binom{j+1}{2})\right),&\qquad i=0,\\
\\
 p\cdot \frac{1}{\binom{n-2}{j-1}}\cdot  (1-p)(j-(n-2)p),&\qquad i=j,\\
  \\
     p\cdot \frac{1}{\binom{n-2}{j-1}}\cdot (1-p)(1-j + (n-2)p),&\qquad i=j+1,\\
     \\
     p\cdot \frac{1}{\binom{n-j}{2}}\left(\binom{n-1}{2}\cdot p^2 -(n-1)(j-1)p + \binom{j}{2}\right),&\qquad i=n.
 \end{cases} 
 \end{split}
\end{equation}
}
\end{propx}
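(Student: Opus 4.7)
The plan is to derive both the formula and the optimizing distribution by reducing the case $k=3$ to the case $k=2$ via the identity \eqref{intro, equation M(n,k,p) for odd k}, namely $M(n,3,p)=p\,M(n-1,2,p)$. Since $p\in\bigl[\tfrac{j-1}{n-2},\tfrac{j}{n-2}\bigr]$ is precisely the hypothesis of Proposition \ref{prop for M(n,2,p) except finitely many p} with $n$ replaced by $n-1$, a direct substitution into \eqref{result sec, equation of M(n,2,p) in the lemma} and multiplication by $p$ immediately yield the claimed formula for $M(n,3,p)$.

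To obtain the optimizing measure, I would exploit the construction implicit in the identity above. Let $\P'\in A_E(n-1,2,p)$ be the optimizer from Proposition \ref{prop for M(n,2,p) except finitely many p} (with $n\mapsto n-1$), with $v$-values $v'_{j-1},v'_j,v'_{n-1}$ read off from \eqref{result sec, equation distribution of v(j) in lemma}. Define a non-exchangeable law $\widetilde{\P}$ on $\{0,1\}^n$ by: with probability $p$ set $X_n=1$ and sample $(X_1,\ldots,X_{n-1})\sim\P'$; with probability $1-p$ set $X_n=0$ and sample $(X_1,\ldots,X_{n-1})\sim\Q$, for an auxiliary exchangeable $2$-wise independent measure $\Q\in A_E(n-1,2,p)$ to be determined. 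Symmetrizing over the permutations of $\{1,\ldots,n\}$ (as in the proof of Lemma \ref{result sec, lemma for exchangeable measure}) yields an exchangeable candidate $\P$ with
$$v_s=\frac{s}{n}\,p\,v'_{s-1}+\frac{n-s}{n}(1-p)\,u_s,\qquad 0\leq s\leq n,$$
where $u_s$ are the $v$-values of $\Q$. I would then force $\Q$ to have support (in sum) inside $\{0,j,j+1\}$, so that the only nonzero $v_s$ occur at $s\in\{0,j,j+1,n\}$, matching the claimed support.

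The remaining work is essentially algebraic verification: (a) checking that the $v_s$ produced by the recipe above coincide with $w_0,w_j,w_{j+1},w_n$ of \eqref{result sec, equation distribution of v(j) in M(n,3,p) lemma}; (b) confirming non-negativity, where the conditions $w_j\geq 0$ and $w_{j+1}\geq 0$ precisely recover the endpoint constraints $p\leq\tfrac{j}{n-2}$ and $p\geq\tfrac{j-1}{n-2}$; and (c) verifying the three moment conditions $\sum_{i=s}^n\binom{n-s}{i-s}w_i=p^s$ for $s=1,2,3$ from \eqref{pre, sec, equation sum of vi equal to pi in the first form}, together with $\sum_i\binom{n}{i}w_i=1$. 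The main obstacle is the bookkeeping in identifying the forced $\Q$ from its support constraint and verifying that $\Q$ itself lies in $A_E(n-1,2,p)$; however, since Proposition \ref{prop for M(n,2,p) except finitely many p} already supplies the $s=1,2$ identities for $\P'$, the verification reduces to elementary manipulation and no new optimization argument is required.
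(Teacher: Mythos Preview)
Your proposal is correct and takes a genuinely different route from the paper.  The paper omits the proof, indicating only that it is ``similar to that of Proposition~\ref{prop for M(n,2,p) except finitely many p}'': that means the linear-programming argument of Subsection~\ref{subsection on LP}, i.e.\ take the dual-feasible basis $I=\{0,j,j+1,n\}$ (the shape forced by Theorem~\ref{pre sec, theorem on I} for odd $k$), invert $B(I)$ via \eqref{pre sec, equation of elements of matrix B}, read off the $v_i$, and verify non-negativity on $[\tfrac{j-1}{n-2},\tfrac{j}{n-2}]$ to invoke Theorem~\ref{pre sec, theorem on B(I) tight bound}.  You instead exploit the odd--even reduction \eqref{intro, equation M(n,k,p) for odd k}, which dispatches the formula for $M(n,3,p)$ in one line --- a real simplification over the matrix inversion.

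For the optimizing measure your coupling-then-symmetrize scheme is valid but heavier than necessary, and one point deserves care: once $\Q$ is pinned down by its support $\{0,j,j+1\}$ and by membership in $A_E(n-1,2,p)$, it is unique, so there is no remaining freedom; the non-automatic fact is that the resulting symmetrized $\P$ is \emph{3}-wise (not merely 2-wise) independent.  That is exactly what your step~(c) has to supply, and it does hold, via Pascal-type identities among the $v'$-moments.  A shorter path that avoids the auxiliary $\Q$ altogether is to note that the claimed optimizer satisfies $w_s=p\,v'_{s-1}$ for $s\in\{j,j+1,n\}$, whence for $t=1,2,3$
\[
\sum_{s\ge t}\binom{n-t}{s-t}w_s \;=\; p\sum_{r\ge t-1}\binom{(n-1)-(t-1)}{\,r-(t-1)\,}v'_r \;=\; p\cdot p^{\,t-1}
\]
directly from the moments of $\P'$; then $w_0$ is fixed by normalization and its non-negativity (together with $w_j,w_{j+1}\ge 0$, which recover the interval endpoints) is an easy check.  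Either variant of your reduction buys conceptual transparency and reusability for other odd $k$, at the cost of some binomial bookkeeping that the paper's LP method handles mechanically.
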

\tcb{Since the proof of the proposition is similar to that of Proposition \ref{prop for M(n,2,p) except finitely many p}, it is omitted.}

It will follow from the discussion in Subsection \ref{subsection on LP} that $M(n,k,p)$ is a piecewise polynomial function of degree $k$ in $p$ for every fixed $n$ and $k$. In Proposition \ref{prop for M(n,2,p) except finitely many p} and~\ref{prop for M(n,3,p) except finitely many p}, these functions \tcb{are} presented explicitly for $k=2,3$. We will refer to the (maximal) intervals on which $M(n,k,p)$ is polynomial, starting at $p=0$ and continuing towards $p=1$, as the {\it first interval} of $M(n,k,p)$, {\it second interval}, and so forth. Notably, for $k=2$ and $k=3$, the intervals have constant lengths of $\frac{1}{n-1}$ and $\frac{1}{n-2}$, respectively. We will see that the intervals are of varying lengths for $k\geq 4$.

For integers $m\geq l\geq 0$, denote
\begin{equation}\label{proof sec, defi of bonferroni}
B(m,l,x)= 1-\binom{m}{1}x + \binom{m}{2}x^2 - \cdots + (-1)^l \binom{m}{l}x^l, \qquad  x\in \R
\end{equation}
\tcb{where} $B(m,l,x)=0$ for negative integers $l$.
We refer to these polynomials as {\it Bonferroni polynomials} for reasons that will be explained in Subsection \ref{subsection Bonferroni}.

The following theorem provides upper bounds on $M(n,k,p)$ in terms of Bonferroni polynomials.

\begin{thm}\label{resuls sec, theorem on upper bounds}\hspace{0.025 cm}
\begin{description}
\item{1.} For every $k$:
$$M(n,k,p)\leq \frac{1}{\binom{n-k+m-1} {m}} 
 \cdot 
 p^{k-m}\cdot B(n-k+m,m,p), \qquad 0\leq m\leq k,\, m\equiv 0 \, (\textup{mod}\, 2).
$$
\item {2.} For even $k$:
$$M(n,k,p)\leq  B(n,k,1-p).$$    
\end{description}
\end{thm}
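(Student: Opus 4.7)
The plan is to handle the two parts via different mechanisms, both rooted in truncated inclusion--exclusion. Part 2 succumbs to a direct Bonferroni inequality applied to the complementary events $A_i=\{X_i=0\}$. Under $k$-wise independence one has $\P(A_{i_1}\cap\cdots\cap A_{i_s})=(1-p)^s$ for every $s\leq k$, so the Bonferroni partial sum of order $s$ equals $S_s=\binom{n}{s}(1-p)^s$; the standard bound obtained by truncating inclusion--exclusion at an even index gives
$$\P(X_1=\cdots=X_n=1)=\P\!\left(\bigcap_{i=1}^n A_i^c\right)\leq \sum_{s=0}^k (-1)^s\,\binom{n}{s}(1-p)^s = B(n,k,1-p),$$
and maximizing over $\P\in A(n,k,p)$ yields Part 2.

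For Part 1, I would first establish the diagonal case $m=k$ (with $k$ even) and then reduce the general case to it by conditioning. By Lemma~\ref{result sec, lemma for exchangeable measure}, I may restrict attention to exchangeable measures, so $u_j:=\binom{n}{j}v_j=\P(\sum_i X_i=j)$ is a probability distribution on $\{0,1,\ldots,n\}$, and the constraints in \eqref{pre, sec, equation sum of vi equal to pi in the first form} rewrite as the factorial-moment identities $\sum_j\binom{j}{i}u_j=\binom{n}{i}p^i$ for $0\leq i\leq k$. Taking the alternating sum over $i$ and invoking the Pascal-type identity $\sum_{i=0}^{k}(-1)^i\binom{j}{i}=(-1)^k\binom{j-1}{k}$ (a one-line induction on $k$ via Pascal's rule), I obtain $\sum_j\binom{j-1}{k}u_j=B(n,k,p)$. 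Since $k$ is even every coefficient $\binom{j-1}{k}$ is non-negative, so isolating the $j=n$ term gives $\binom{n-1}{k}v_n\leq B(n,k,p)$---precisely the $m=k$ case of Part 1.

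For a general even $m\in\{0,2,\ldots,k\}$, I would condition on $B:=\{X_1=\cdots=X_{k-m}=1\}$, which has $\P(B)=p^{k-m}$ by $k$-wise independence. For any $s\leq m$ indices drawn from $\{k-m+1,\ldots,n\}$, the joint event that those coordinates and $B$ are all $1$ involves at most $k$ variables, so its probability is $p^{k-m+s}$; dividing by $\P(B)$ identifies the conditional law of $(X_{k-m+1},\ldots,X_n)$ given $B$ as an $m$-wise independent Bernoulli$(p)$ law on $n-k+m$ bits. Hence
$$\P(X_1=\cdots=X_n=1)=\P(B)\cdot\P(X_{k-m+1}=\cdots=X_n=1\mid B)\leq p^{k-m}\,M(n-k+m,m,p),$$
and chaining with the diagonal bound applied to $(n-k+m,m)$ yields Part 1; the boundary cases $m=0$ and $p=0$ are immediate.

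The step that requires the most care is the diagonal $m=k$ argument, because one must recognize the alternating combination of constraints that simultaneously produces the exact right-hand side $B(n,k,p)$ and leaves uniformly non-negative coefficients $\binom{j-1}{k}$; this is precisely where the parity hypothesis on $k$ is essential. Everything else is bookkeeping: Lemma~\ref{result sec, lemma for exchangeable measure} handles the reduction to exchangeable measures, and the conditioning step works because the conditional measure is manifestly a feasible instance of the same problem with smaller parameters $(n-k+m,m)$.
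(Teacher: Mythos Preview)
Your proof is correct. Part~2 is identical to the paper's: apply Bonferroni's inequality to the complementary events $\{X_i=0\}$, using $k$-wise independence to evaluate $S_s=\binom{n}{s}(1-p)^s$ for $s\leq k$.

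For Part~1 you take a genuinely different route. The paper works directly in the LP picture: for each even $m$ it multiplies the last $m+1$ constraints in \eqref{proof sec, equation of constraints second interval} by the alternating weights $(-1)^i\binom{n-k+m}{i}$, sums, and then verifies by explicit binomial manipulations (\eqref{proof sec, equation of coefficient c(k-m)}--\eqref{proof sec, equation3 of ck in the second interval}) that every resulting coefficient $c_l$ is non-negative and that $c_n=\binom{n-k+m-1}{m}$. You instead prove only the diagonal case $m=k$ (for even $k$) via the single identity $\sum_{i=0}^k(-1)^i\binom{j}{i}=(-1)^k\binom{j-1}{k}$, and then reduce the general case to it by conditioning on $B=\{X_1=\cdots=X_{k-m}=1\}$, observing that under $\P(\cdot\mid B)$ the remaining $n-k+m$ bits lie in $A(n-k+m,m,p)$ (which is immediate from the paper's characterization of $A(n,k,p)$ via all-ones probabilities). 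This yields the chain $M(n,k,p)\leq p^{k-m}M(n-k+m,m,p)\leq \frac{p^{k-m}}{\binom{n-k+m-1}{m}}B(n-k+m,m,p)$. The paper's argument is a uniform LP computation that stays entirely within the constraint system and avoids introducing the auxiliary problem $(n-k+m,m)$; your argument is more modular and more probabilistic, trading the general coefficient verification for a clean conditioning step and a single instance of the alternating-sum identity. Both are short; yours makes transparent that all the bounds in Part~1 are consequences of the single $m=k$ bound together with the monotonicity $M(n,k,p)\leq p^{k-m}M(n-k+m,m,p)$.
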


As mentioned in \eqref{results section tight bound for small p}, it was proven in Benjamini et al. \cite[Proposition 35]{benjamini-gurel-gurevich-peled} that, if $p\leq \frac{1}{n-1}$, then $M(n,k,p)= p^k$. \tcb{We generalize this result in Theorem \ref{resuls sec, theorem on first half} below.}

\begin{thm}\label{resuls sec, theorem on first half}
$M(n,k,p)=p^k$ if and only if $p\leq \frac{1}{n-k+1}$. Moreover, the optimizing distribution for such $p$ is given by
\begin{align}\label{result ses, equation of optimal distribution in the first interval}
v_i=\begin{cases}
 (1-p)\cdot p^i \cdot B(n-i-1, k-i-1,p),\quad &0\leq i\leq k-1,\\ 
0, \quad &k\leq i\leq n-1,\\
p^k, \qquad &i=n.
\end{cases}
\end{align}
\end{thm}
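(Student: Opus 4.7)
The plan is to first establish the universal upper bound $M(n,k,p)\le p^k$ and then pin down exactly when equality is attained. Setting $i=k$ in the constraint system \eqref{pre, sec, equation sum of vi equal to pi in the first form} gives
\[
\sum_{j=k}^{n}\binom{n-k}{j-k}v_j=p^k.
\]
Since each $v_j\ge 0$ and the coefficient of $v_n$ is $\binom{n-k}{n-k}=1$, we immediately obtain $v_n\le p^k$, and hence $M(n,k,p)\le p^k$ for all $p$.

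If $v_n=p^k$, then the same equation forces $v_j=0$ for $k\le j\le n-1$. The remaining $k$ constraints (those for $i=0,\dots,k-1$) then form an upper-triangular linear system in the unknowns $v_{k-1},\dots,v_0$ with $1$'s on the diagonal, so these values are uniquely determined. A backward induction on $k-i$ yields the closed form \eqref{result ses, equation of optimal distribution in the first interval}; equivalently, one can verify the formula by direct substitution into \eqref{pre, sec, equation sum of vi equal to pi in the first form}, which reduces to the polynomial identity
\[
\sum_{s=0}^{K-1}\binom{N}{s}p^s\,B(N-s-1,K-s-1,p)=\frac{1-p^{K}}{1-p},\qquad N\ge K\ge 1,
\]
provable by a short generating-function calculation.

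The ``if'' direction now reduces to showing that the uniquely determined $v_i$'s are nonnegative whenever $p\le 1/(n-k+1)$; that is, $B(n-i-1,k-i-1,p)\ge 0$ for every $0\le i\le k-1$. Writing $m=n-i-1$ and $l=k-i-1$ (so $m-l=n-k$ is independent of $i$), the required bound becomes $B(m,l,p)\ge 0$ on $[0,1/(m-l+1)]$. For even $l$ this is immediate from the classical (lower) Bonferroni inequality $B(m,l,p)\ge (1-p)^{m}\ge 0$. For odd $l$ I would use the derivative identity
\[
\frac{d}{dp}B(m,l,p)=-m\,B(m-1,l-1,p),
\]
which together with the even case shows that $B(m,l,\cdot)$ is nonincreasing on $[0,1]$; hence its minimum on $[0,1/(m-l+1)]$ is attained at the right endpoint, and it suffices to prove $B(m,l,1/(m-l+1))\ge 0$. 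This endpoint inequality is the main technical hurdle; I would attack it by nested induction on $l$ and on $m$, exploiting the recursion $B(m,l,p)=(1-p)B(m-1,l,p)+(-1)^l\binom{m-1}{l}p^{l+1}$.

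For the converse, if $p>1/(n-k+1)$ (recall $k\ge 2$), the uniqueness established above shows that any $\P\in A_E(n,k,p)$ with $v_n=p^k$ would be forced to have $v_{k-2}=p^{k-2}(1-p)\bigl(1-(n-k+1)p\bigr)<0$, which is impossible. Hence $v_n<p^k$ throughout the compact set $A_E(n,k,p)$, giving $M(n,k,p)<p^k$.
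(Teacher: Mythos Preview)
Your plan is correct and structurally matches the paper's proof: the upper bound $v_n\le p^k$ from the $i=k$ constraint, the forced vanishing of $v_k,\dots,v_{n-1}$ under equality, the necessity via $v_{k-2}=p^{k-2}(1-p)\bigl(1-(n-k+1)p\bigr)$, and the sufficiency via nonnegativity of the Bonferroni polynomials (the paper's Proposition~\ref{result, proposition for B(n,l,p)}) are all exactly what the paper does. Two technical differences are worth noting. First, to obtain the closed form for $v_i$ the paper does not solve the triangular system or verify your polynomial identity; instead it invokes the inclusion--exclusion representation $v_i=\sum_{j=i}^n(-1)^{j-i}\binom{n-i}{j-i}\P(X_1=\cdots=X_j=1)$ together with the observation that, once $v_n=p^k$, one has $\P(X_1=\cdots=X_j=1)=p^k$ for \emph{every} $j\ge k$ (by sandwiching between $\P(X_1=\cdots=X_k=1)=p^k$ and $v_n=p^k$), which yields the formula in one stroke. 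Second, for the endpoint inequality $B(m,l,1/(m-l+1))\ge 0$ with $l$ odd, the paper does not use your proposed induction via the recursion; it applies the even-level Bonferroni bound at level $l-1$ to get $B(m,l,p)\ge (1-p)^m-\binom{m}{l}p^l$, and then shows $(1-p_1)^m>\binom{m}{l}p_1^l$ at $p_1=1/(m-l+1)$ by an elementary product estimate and a short case analysis. Both of these steps are slightly slicker than what you outline, but your route would also work.
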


For general $k$, Theorem \ref{resuls sec, theorem on first half} \tcb{states the value of} $M(n,k,p)$ on the first interval (i.e., near~0). \tcb{We will also find the value of $M(n,k,p)$ on the second interval (Theorem \ref{resuls sec, theorem on second interval 1}) and on the last two intervals (Theorem \ref{resuls sec, theorem on last half} and \ref{resuls sec, theorem on second last interval 1}).}

\begin{thm}\label{resuls sec, theorem on last half}
Let $2\leq k\leq n-1$ be positive integers with $k$ even. Then
\begin{equation}
M(n,k,p)= B(n,k,1-p),\qquad 1-\frac{1}{n-k+1}\leq p\leq1.
\end{equation}
Moreover, the optimizing distribution is given by
\begin{align}\label{result sec, equation of optimal distribution in the last interval}
 v_i= \begin{cases}
     0,\quad &0\leq i\leq n-k-1,\\
     (1-p)^{n-i}\cdot B(i ,i-(n-k),1-p),\quad &n-k\leq i\leq n.\\
 \end{cases}    
\end{align}
\end{thm}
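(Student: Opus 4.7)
The plan is to sandwich $M(n,k,p)$ between the upper bound of Theorem \ref{resuls sec, theorem on upper bounds}(2) and the value $v_n$ of the explicit measure in \eqref{result sec, equation of optimal distribution in the last interval}. The inequality $M(n,k,p)\leq B(n,k,1-p)$ (for even $k$) arises from applying the Bonferroni inequality of order $k$ to the events $A_i=\{X_i=0\}$: $k$-wise independence fixes the symmetric sums $S_j=\binom{n}{j}(1-p)^j$ for $j\leq k$, and even-order truncation gives $\P(\bigcap A_i^c)\leq B(n,k,1-p)$. Equality holds precisely when $S_{k+1}=\cdots=S_n=0$, i.e., no $k+1$ bits can simultaneously be zero, which forces any optimizing exchangeable measure to have $v_i=0$ for $i<n-k$. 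Thus the task reduces to exhibiting a $k$-wise independent exchangeable distribution supported on $\{n-k,\ldots,n\}$ with mean-$p$ marginals.

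I would verify that $v_i=(1-p)^{n-i}B(i,\,i-(n-k),\,1-p)$ on $\{n-k,\ldots,n\}$ is such a distribution. Immediately $v_n=B(n,k,1-p)$. The $k+1$ equality constraints $\sum_{j=\ell}^n\binom{n-\ell}{j-\ell}v_j=p^\ell$ for $0\leq\ell\leq k$ (including normalization at $\ell=0$) rewrite, upon setting $q=1-p$ and substituting $j=n-i$, as $\sum_{j=0}^k\binom{n-\ell}{j}\,q^j\,B(n-j,\,k-j,\,q)=(1-q)^\ell$. Expanding the inner Bonferroni polynomial, swapping summation orders via $t=j+s$, and applying the standard binomial identity $\sum_{j=0}^t(-1)^j\binom{a}{j}\binom{b-j}{t-j}=\binom{b-a}{t}$ (with $a=n-\ell$, $b=n$) collapses the double sum to $\sum_{t=0}^\ell(-q)^t\binom{\ell}{t}=(1-q)^\ell$, completing the algebraic verification. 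Since the linear system has maximal rank in the $k+1$ unknowns $v_{n-k},\ldots,v_n$, this formula is the unique solution.

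The main obstacle is non-negativity of the $v_i$, i.e., showing $B(M,L,q)\geq 0$ on $q\in[0,1/(M-L+1)]$ with $M=i$, $L=i-(n-k)$, and $0\leq L\leq k$. For even $L$ this is painless: applying Theorem \ref{resuls sec, theorem on upper bounds}(2) to $M$ iid Bernoulli$(q)$ bits gives $B(M,L,q)\geq(1-q)^M\geq 0$. For odd $L$ the same Bonferroni argument reverses direction and is useless; instead, I would use the derivative identity $\frac{d}{dq}B(M,L,q)=-M\,B(M-1,L-1,q)$. Since $L-1$ is even, the previous case gives $B(M-1,L-1,q)\geq 0$ on the relevant interval, so $B(M,L,\cdot)$ is non-increasing there and its minimum is attained at the right endpoint. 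It then remains to verify $B(M,L,1/(M-L+1))\geq 0$ directly, which can be handled by telescoping the recursion $B(M,L,q)=B(M-1,L,q)-q\,B(M-1,L-1,q)$ down to the base case $B(L,L,q)=(1-q)^L$ and checking that the resulting closed form factors non-negatively throughout the parameter range $L\leq M$.
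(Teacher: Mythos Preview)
Your overall architecture matches the paper's: bound $M(n,k,p)$ above by $B(n,k,1-p)$ via even-order Bonferroni applied to the events $\{X_i=0\}$, then exhibit the measure \eqref{result sec, equation of optimal distribution in the last interval} as a feasible member of $A_E(n,k,p)$ achieving this value. Your verification of the equality constraints is carried out differently from the paper---you expand $B(n-j,k-j,q)$, swap sums, and invoke the Vandermonde-type identity $\sum_{j}(-1)^j\binom{a}{j}\binom{b-j}{t-j}=\binom{b-a}{t}$---whereas the paper packages the same computation into a separate inductive identity for Bonferroni polynomials (its Lemma \ref{prelim, lemma for Bonferroni polynomial}.3). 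Your route is arguably more direct and is correct as written.

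The gap is in the non-negativity step for odd $L$. Your reduction to the endpoint via the derivative identity $\frac{d}{dq}B(M,L,q)=-M\,B(M-1,L-1,q)$ is fine and is exactly what the paper does. But the claim that the endpoint value $B(M,L,1/(M-L+1))\geq 0$ ``can be handled by telescoping the recursion $B(M,L,q)=B(M-1,L,q)-qB(M-1,L-1,q)$ down to $B(L,L,q)=(1-q)^L$ and checking that the resulting closed form factors non-negatively'' does not go through. That telescoping yields
\[
B(M,L,q)=(1-q)^L-q\sum_{m=L}^{M-1}B(m,L-1,q),
\]
and since each $B(m,L-1,q)\geq (1-q)^m$ for even $L-1$, this gives only an \emph{upper} bound (recovering $B(M,L,q)\leq (1-q)^M$), not the lower bound you need. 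The paper treats this endpoint inequality as a standalone proposition (Proposition \ref{result, proposition for B(n,l,p)}) and its proof is not a one-liner: after reducing via $B(M,L,q_0)\geq (1-q_0)^M-\binom{M}{L}q_0^L$, one still has to establish $\binom{M}{L}q_0^L<(1-q_0)^M$ at $q_0=1/(M-L+1)$, which the paper does by a case analysis (small $L$ and $M$ handled separately, large $L$ via a product estimate against $1/e$). You should either supply this argument or cite Proposition \ref{result, proposition for B(n,l,p)} directly.
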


From Theorem \ref{resuls sec, theorem on first half} and \ref{resuls sec, theorem on last half}, we obtain
\begin{cor}
\begin{equation*}
\begin{split}
 M(n,n-1,p)= \begin{cases}
  p^{n-1},&\qquad p\leq 1/2,\\
  B(n,n-1,1-p),& \qquad p> 1/2.\\
 \end{cases}   
\end{split}    
\end{equation*}
For $p\leq1/2$, the optimal distribution is given by \eqref{result ses, equation of optimal distribution in the first interval}. For $p>1/2$, it is given by \eqref{result sec, equation of optimal distribution in the last interval}.
\end{cor}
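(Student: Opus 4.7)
The plan is to derive the corollary as a straightforward specialization of the two preceding theorems at $k = n-1$. Note that Theorem~\ref{resuls sec, theorem on last half} requires $k$ to be even, so the corollary tacitly assumes $n$ is odd; the case $n$ even would need a separate argument via \eqref{intro, equation M(n,k,p) for odd k} and would not in general collapse to $B(n,n-1,1-p)$.

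First, I would apply Theorem~\ref{resuls sec, theorem on first half} with $k = n-1$: the critical threshold $\tfrac{1}{n-k+1}$ collapses to $\tfrac{1}{2}$, and the theorem yields $M(n,n-1,p) = p^{n-1}$ precisely for $p \leq 1/2$, with optimizing measure given by \eqref{result ses, equation of optimal distribution in the first interval} specialized to $k = n-1$. Next, applying Theorem~\ref{resuls sec, theorem on last half} with $k = n-1$ (even), the lower threshold $1 - \tfrac{1}{n-k+1}$ likewise becomes $\tfrac{1}{2}$, and we obtain $M(n,n-1,p) = B(n,n-1,1-p)$ for $p \geq 1/2$, with optimizing measure given by \eqref{result sec, equation of optimal distribution in the last interval}. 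Concatenating the two pieces gives the claimed formula on $[0,1]$.

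The only point that deserves a short verification is consistency at the shared endpoint $p = 1/2$, where the two expressions must coincide by continuity of $M(n,k,\cdot)$ in $p$. Using the binomial identity $\sum_{j=0}^{n} (-1)^j \binom{n}{j} (1/2)^j = (1/2)^n$ and isolating the $j = n$ term yields
\[
B(n,n-1,1/2) \;=\; (1/2)^n \bigl(1 - (-1)^n\bigr),
\]
which equals $(1/2)^{n-1}$ exactly when $n$ is odd, confirming that the two pieces glue correctly at $p = 1/2$. Beyond this short computation there is no substantive obstacle: the corollary is essentially the observation that for $k = n-1$ the ``first interval'' and the ``last interval'' of the two theorems jointly cover all of $[0,1]$.
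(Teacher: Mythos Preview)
Your proposal is correct and follows exactly the paper's approach: the paper simply states that the corollary follows from Theorems~\ref{resuls sec, theorem on first half} and~\ref{resuls sec, theorem on last half}, and you have spelled out precisely this specialization to $k=n-1$, where both thresholds $\tfrac{1}{n-k+1}$ and $1-\tfrac{1}{n-k+1}$ collapse to $\tfrac12$. Your parity caveat (that Theorem~\ref{resuls sec, theorem on last half} needs $k$ even, hence $n$ odd) is a valid observation that the paper leaves implicit, and your consistency check at $p=\tfrac12$ is a helpful sanity check though not strictly required, since both theorems already include the endpoint $p=\tfrac12$.
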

\tcb{We now proceed to state Theorem \ref{resuls sec, theorem on second interval 1}.}
\begin{thm}\label{resuls sec, theorem on second interval 1}
\hspace{0.025 cm}
\begin{description}
\item {1.} Let $2\leq k\leq n-2$, and 
$$p_1= \frac{1}{n-k+1}.$$
Then, for some $p_2 >p_1$,
\begin{equation}\label{result sec, equation of M(n,k,p) in second interval}
M(n,k, p)= \frac{1}{\binom{n-k+1}{2}}\cdot p^{k-2}\cdot  B(n-k+2,2,p),\qquad p\in \left[p_1,p_2\right]. 
\end{equation}
Moreover, the optimizing distribution is given by
\begin{equation}\label{result sec, equation2 of M(n,k,p) in second interval}
 v_i= \begin{cases}
     p^i B(n-i,k-i,p) + (-1)^{k-i-1} \frac{\binom{n-i-1}{k-i}}{\binom{n-k+1}{2}}p^{k-2} B(n-k+2,2,p),\quad &i\in I-\{n\},\\
     0, \quad &i \not\in I,\\
     \frac{1}{\binom{n-k+1}{2}}\cdot p^{k-2}\cdot  B(n-k+2,2,p), \quad &i=n,\\
 \end{cases}
\end{equation}
where $I= \{0,1,\ldots ,k-3, k-1,k,n\}$.
\item {2.} The largest $p_2$ for which \eqref{result sec, equation of M(n,k,p) in second interval} and \eqref{result sec, equation2 of M(n,k,p) in second interval} hold is
\begin{itemize}
\item $\frac{2}{n-1}$ $-$ for $k=2$;
\item $\frac{2}{n-2}$ $-$ for $k=3$;
\item the zero of the cubic polynomial
\begin{equation}\label{result sec, equation of f(2,k-4)}
1-\binom{n-k+3}{1}p + \frac{5}{6} \binom{n-k+3}{2}p^2- \frac{1}{2}\binom{n-k+3}{3}p^3   
\end{equation}
in the interval $\left[\frac{1}{n-k+1},\frac{2}{n-k+1}\right]$ $-$ for $k\geq 4$.
\end{itemize}
\end{description}
\end{thm}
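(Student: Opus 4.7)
The plan is to construct the candidate measure $v_i$ from \eqref{result sec, equation2 of M(n,k,p) in second interval} explicitly, verify feasibility (the $k+1$ moment constraints and nonnegativity of each $v_i$), and then derive optimality by matching $v_n$ with the upper bound of Theorem \ref{resuls sec, theorem on upper bounds}(1) at $m=2$. Part (2) will follow by locating the largest $p_2>p_1$ for which the candidate remains feasible.

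For Part (1), I would write each $v_i$ for $i\in I-\{n\}$ as $v_i=M_i+C_i$, with main term $M_i=p^iB(n-i,k-i,p)$ and correction $C_i=(-1)^{k-i-1}\binom{n-i-1}{k-i}\alpha$, where $\alpha=p^{k-2}B(n-k+2,2,p)/\binom{n-k+1}{2}$. Substituting into $\sum_{j\ge i}\binom{n-i}{j-i}v_j$, the main-term portion collapses to $p^i$ by the same Bonferroni identity that underlies Theorem \ref{resuls sec, theorem on first half}, while the correction portion (together with $v_n=\alpha$) cancels to zero via a binomial identity of the form $\sum_j(-1)^{k-j-1}\binom{n-i}{j-i}\binom{n-j-1}{k-j}$ restricted to $j\in I$. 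The omission of the index $j=k-2$ from $I$ is precisely what makes this cancellation work. Once feasibility is verified, $v_n=\alpha$ matches the $m=2$ instance of Theorem \ref{resuls sec, theorem on upper bounds}(1), giving optimality and \eqref{result sec, equation of M(n,k,p) in second interval}.

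For Part (2), the value $p_2$ is the smallest $p>p_1$ at which some $v_i$, $i\in I-\{n\}$, becomes zero. For $k=2,3$ the value $p_2=2p_1$ follows by inspection of Propositions \ref{prop for M(n,2,p) except finitely many p} and \ref{prop for M(n,3,p) except finitely many p} with $j=2$. For $k\geq 4$ I single out $v_{k-4}$; expanding $v_{k-4}/p^{k-4}=B(N+1,4,p)-\frac{N(N-1)}{12}p^2B(N-1,2,p)$ with $N=n-k+3$, a direct matching of the four coefficients yields the factorization
\begin{equation*}
v_{k-4}=p^{k-4}(1-p)\left[1-\binom{N}{1}p+\frac{5}{6}\binom{N}{2}p^2-\frac{1}{2}\binom{N}{3}p^3\right].
\end{equation*}
Thus $v_{k-4}\ge 0$ iff the cubic is nonnegative. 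One checks the cubic is positive at $p_1=1/(N-2)$ and negative at $2p_1$, so the intermediate value theorem supplies a unique root in $(p_1,2p_1]$. A final verification that the other $v_i$, $i\in I-\{n\}$, remain positive on $[p_1,p_2]$ confirms that this root is the desired $p_2$.

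The principal obstacle is the Bonferroni-type cancellation in Part (1). A secondary one is confirming in Part (2) that $v_{k-4}$ is in fact the first basic variable to hit zero; other $v_i$ decrease as $p$ grows past $p_1$, and one must rule out any of them crossing zero before the cubic's root.
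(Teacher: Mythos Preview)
Your plan for Part~1 is sound and in fact more elementary than the paper's. The paper does not verify the moment constraints by hand; instead it invokes the LP machinery of Pr{\'e}kopa (Theorems~\ref{pre sec, theorem on I}--\ref{pre sec, theorem on B(I) tight bound} and the explicit inverse formula~\eqref{pre sec, equation of elements of matrix B}): it selects the dual feasible basis $I=\{0,1,\ldots,k-3,k-1,k,n\}$ and computes each $v_i$ as the $i$-th entry of $B(I)^{-1}d$, so that the equality constraints hold automatically. Your decomposition $v_i=M_i+C_i$ with a direct check of the constraints avoids all of this LP background; the cost is that the binomial cancellation you allude to must actually be carried out (it is correct, but not entirely trivial). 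For the mere \emph{existence} of some $p_2>p_1$, both arguments coincide: at $p=p_1$ the candidate matches the first-interval optimum of Theorem~\ref{resuls sec, theorem on first half}, so the relevant $v_i$ are nonnegative there, and continuity finishes.

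The genuine gap is the one you labelled secondary. To certify that $v_{k-4}$ is the \emph{first} basic variable to vanish past $p_1$, the paper treats $v_{k-1}$, $v_{k-3}$, $v_{k-4}$ by explicit factorizations (essentially yours), but for $v_0,\ldots,v_{k-5}$ it does \emph{not} analyze the polynomials at all. Instead it invokes Pr{\'e}kopa's structural lemma (Theorem~\ref{pre sec, theorem on basis change}): if, for a given $i\in I$, no single swap $(I\setminus\{i\})\cup\{j\}$ produces another dual feasible basis, then $v_i\ge 0$ automatically. For $i\le k-5$, removing $i$ breaks a consecutive pair in a way that no single insertion can repair (by the pair structure of Theorem~\ref{pre sec, theorem on I}), so the hypothesis holds and $v_i\ge 0$ follows for free. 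Your proposal offers no substitute for this step, and without it you face an unbounded-in-$k$ family of degree-$k$ polynomials to control on an interval whose right endpoint is only known implicitly as a cubic root. I do not see an elementary workaround; you will almost certainly need to import this dual-feasibility argument. One minor point in addition: the intermediate value theorem gives a root of the cubic in $(p_1,2p_1)$ but not its uniqueness; the paper secures uniqueness by checking that $f_{k-4}'<0$ on that interval.
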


We mention that, using Lemma \ref{prelim, lemma for Bonferroni polynomial}$.1$ below, we may rewrite the first case in \eqref{result sec, equation2 of M(n,k,p) in second interval} alternatively in the form
$$v_i=
   (1-p) \left(p^i B(n-i-1,k-i-1,p)
   + (-1)^{k-i-1} \frac{\binom{n-i-1}{k-i}}{\binom{n-k+1}{2}}p^{k-2} B(n-k+1,1,p)\right),$$
for $i\in I-\{n\}$.

\begin{remark}\emph{
In view of Theorems \ref{resuls sec, theorem on first half} and \ref{resuls sec, theorem on second interval 1}, the upper bound in Theorem \ref{resuls sec, theorem on upper bounds}$.1$ is tight for $m=0,2$, and by Theorem \ref{resuls sec, theorem on last half} the same holds for Theorem \ref{resuls sec, theorem on upper bounds}$.2$. $M(n,k,p)$ is given by the bound with $m=0$ in Theorem \ref{resuls sec, theorem on upper bounds}$.1$ on the first interval, by the bound with $m=2$ on the second interval, and by the bound in Theorem \ref{resuls sec, theorem on upper bounds}$.2$ on the last interval. In all of the examples we have checked, $M(n,k,p)$ coincided with the bound for $m=4$ on the third interval. \tcb{However,   $M(n,k,p)$ only sometimes coincided with the bound for $m=6$ on the fourth interval}. For example, for $n=10$, $k=6$, in the first four intervals, $M(n,k,p)$ is given by the bounds as in Theorem \ref{resuls sec, theorem on upper bounds}$.1$ for $m=0,2,4,6$, respectively,  but for $n=11$, only the first three are as in Theorem \ref{resuls sec, theorem on upper bounds}$.1$, but not the 4-th, as follows:
\begin{equation}\label{result section, equation for M(11,6,p)}
\begin{split}
 M(11,6,p)= \begin{dcases}
p^6,& 0 < p < \frac{1}{6},\\
 \frac{p^4}{15} (1 - 7 p + 21 p^2),&\frac{1}{6} < p < 0.263\ldots,\\
 \frac{p^2}{70} (1 - 9 p + 36 p^2 - 84 p^3 + 126 p^4),&0.263\ldots < p < 0.294\ldots,\\
 \frac{p^2}{140}(5 - 45 p + 180 p^2 - 378 p^3 + 378 p^4),&0.294\ldots < p < 0.315\ldots.
 \end{dcases}   
\end{split}    
\end{equation}
The first three equalities in \eqref{result section, equation for M(11,6,p)} are the same as the bound in Theorem \ref{resuls sec, theorem on upper bounds}$.1$, namely 
$$\frac{1}{\binom{n-k+m-1}{m}}\cdot p^{k-m}B(n-k+m,m,p),$$
for $m=0,2,4$, respectively,
while the fourth equality is different from that bound for $m=6$, which is
$$\frac{1}{\binom{n-k+m-1}{m}}\cdot p^{k-m}B(n-k+m,m,p)= \frac{1}{120} (1 - 11 p + 55 p^2 - 165 p^3 + 330 p^4 - 462 p^5 + 462 p^6).$$
}
\end{remark}

The following theorem deals with the \tcb{second to last interval}.
\begin{thm}\label{resuls sec, theorem on second last interval 1}\hspace{0.025 cm}
\begin{description}
\item {1.} For $k$ even let $2\leq k\leq n-2$ be positive integers and 
$$\overline{p}_1= 1-\frac{1}{n-k+1}.$$
Then, for some $\overline{p}_2 <\overline{p}_1$,
\begin{equation}\label{result sec, equation of M(n,k,p) in second last interval}
M(n,k,p)= B(n,k,1-p)+ \frac{\binom{n}{k+1}}{\binom{n-k+1}{2}}\cdot (1-p)^{k-1}B(n-k+1,1,1-p), \qquad p\in \left[\overline{p}_2,\overline{p}_1\right].
\end{equation}
Moreover, the optimizing distribution is given by
\begin{equation}\label{result sec, equation of distribution v(i) in second last interval}
\begin{split}
 v_{n-i}= \begin{cases}
     (1-p)^i B(n-i,k-i,1-p)-\\
     ~~(-1)^{k-i-1} \frac{\binom{n-i}{k+1-i}}{\binom{n-k+1}{2}}\cdot(1-p)^{k-1} B(n-k+1,1,1-p),& \quad i\in (n-I),\\
     0 & \quad i \not\in n- I,\\
 \end{cases}
\end{split}
\end{equation}
where $I= \{n-k-1,n-k,n-k+2,\ldots ,n-1,n\}$.
\item {2.} The smallest $\overline{p}_2$ for which \eqref{result sec, equation of M(n,k,p) in second interval} and \eqref{result sec, equation of distribution v(i) in second last interval} hold is
\begin{itemize}
\item $\frac{2}{n-1}$ $-$ for $k=2$;
\item the zero of the cubic polynomial
\begin{equation}\label{result sec, equation2 of f(2,k-4)}
1-\binom{n-k+3}{1}(1-p) + \frac{5}{6} \binom{n-k+3}{2}(1-p)^2- \frac{1}{2}\binom{n-k+3}{3}(1-p)^3    
\end{equation}
in the interval $\left[1-\frac{2}{n-k+1},1-\frac{1}{n-k+1}\right]$ $-$ for $k\geq 4$.
\end{itemize}
\end{description}
\end{thm}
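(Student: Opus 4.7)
The plan is to mirror the strategy used for Theorem \ref{resuls sec, theorem on second interval 1}, since Theorem \ref{resuls sec, theorem on second last interval 1} describes the optimizing measure at the opposite end of the interval $[0,1]$, now for even $k$. First I would take the explicit candidate distribution \eqref{result sec, equation of distribution v(i) in second last interval} with support contained in $I=\{n-k-1,n-k,n-k+2,\dots,n-1,n\}$, and verify directly that it lies in $A_E(n,k,p)$. Setting $q=1-p$, the $k+1$ moment constraints $\sum_{j=i}^{n}\binom{n-i}{j-i}v_j=p^i$ for $0\le i\le k$ (equivalently $\sum_{j=0}^{n-i}\binom{n-i}{j}v_j=q^i$) should collapse, after substituting the formula for $v_{n-i}$, to polynomial identities in $q$ among Bonferroni polynomials. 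These identities are handled by the recurrence and summation formulas recorded in Lemma \ref{prelim, lemma for Bonferroni polynomial}; the special case $i=0$ then recovers \eqref{result sec, equation of M(n,k,p) in second last interval} (using parity of $k$ to fix the sign of the correction term).

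For optimality, the bound $M(n,k,p)\le B(n,k,1-p)$ of Theorem \ref{resuls sec, theorem on upper bounds}.2 is tight on the last interval but is loose here, so a sharper argument is required. My plan is to exhibit a dual LP certificate: multipliers $(c_0,\dots,c_k)$ attached to the $k+1$ moment constraints whose dual objective $\sum_{i=0}^{k}c_ip^i$ matches the right-hand side of \eqref{result sec, equation of M(n,k,p) in second last interval} and for which the dual inequality $\sum_{i=0}^{\min(j,k)}c_i\binom{n-i}{j-i}\ge \mathbf{1}_{\{j=n\}}$ holds for every $0\le j\le n$. Complementary slackness against the support $I$ forces these to be tight precisely for $j\in I$, which pins down the $c_i$; one then verifies strict inequality at the remaining indices $j\in\{0,1,\dots,n\}\setminus I$, again via Bonferroni-polynomial identities.

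To fix the interval of validity, the right endpoint $\overline{p}_1=1-1/(n-k+1)$ is exactly the value at which one coordinate of the candidate vanishes, marking the transition to the last interval of Theorem \ref{resuls sec, theorem on last half}. Moving $p$ down from $\overline{p}_1$, I would determine which entry of \eqref{result sec, equation of distribution v(i) in second last interval} first hits zero. For $k=2$, a direct computation gives $\overline{p}_2=2/(n-1)$. For $k\ge4$, isolating the critical coordinate and setting its explicit value to zero should, after factoring out a power of $(1-p)$, yield the cubic \eqref{result sec, equation2 of f(2,k-4)}; its unique root in $\bigl[1-2/(n-k+1),\overline{p}_1\bigr]$ is the desired $\overline{p}_2$.

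The main obstacle I expect is the construction of the dual certificate in the optimality step. Although the primal candidate is handed to us, producing the multipliers $c_i$ and verifying the $n+1$ dual constraints (tight on $I$, strict off $I$) requires a careful Bonferroni-polynomial calculation tailored to the peculiar support $I$, which is not a simple reflection of the support $\{0,1,\dots,k-3,k-1,k,n\}$ used in Theorem \ref{resuls sec, theorem on second interval 1}. A secondary technical point is to pinpoint the correct coordinate controlling $\overline{p}_2$ in each regime of $k$, so that no other coordinate turns negative first and invalidates the endpoint formula.
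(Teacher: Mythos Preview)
Your plan is sound in outline, but you are proposing to do by hand what the Boros--Pr{\'e}kopa machinery in Section~\ref{preliminaries k-wise} already gives for free. The support $I=\{n-k-1,n-k,n-k+2,\ldots,n-1,n\}$ consists of $n$ together with $k/2$ pairs of consecutive integers, so by Theorem~\ref{pre sec, theorem on I} the basis $B(I)$ is automatically dual feasible. Theorem~\ref{pre sec, theorem on B(I) tight bound} then says that once the primal solution $B(I)^{-1}d$ is nonnegative, it is optimal; there is no need to construct multipliers $c_i$ or check the $n+1$ dual constraints one by one. The paper exploits this by passing to the reflected variables $v_i'=v_{n-i}$, $S_i'=\binom{n}{i}(1-p)^i$, and the index set $I'=\{0,1,\ldots,k-2,k,k+1\}$, after which the basis-inverse formula \eqref{pre sec, equation of elements of matrix B} produces the $v_i'$ directly and the calculation runs in parallel with the proof of Theorem~\ref{resuls sec, theorem on second interval 1}. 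Your separate feasibility check is also unnecessary: the $v_i$ computed as $B(I)^{-1}d$ satisfy the moment equations by construction.

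For part~2 there is a genuine gap in your plan. Identifying the coordinate whose vanishing gives the cubic \eqref{result sec, equation2 of f(2,k-4)} (it is $v_{n-k+3}$) is not enough; you must also show that none of the remaining $k-2$ coordinates $v_{n-k+4},\ldots,v_n$ turns negative first on $[\overline{p}_2,\overline{p}_1]$. Checking each of these explicitly is unpleasant. The paper handles them in one stroke via Theorem~\ref{pre sec, theorem on basis change}: for each such index, every basis obtained by swapping it out fails to be dual feasible (by the structural characterization of Theorem~\ref{pre sec, theorem on I}), forcing the coordinate to stay nonnegative. You should invoke the same device rather than attempt coordinate-by-coordinate analysis.
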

\tcb{We conclude this section with two remarks.}
\begin{remark}\emph{
\begin{enumerate}
\item Unlike the polynomials describing $M(n,k,p)$ on the first, second, and last intervals, \tcb{which form upper bounds on $M(n,k,p)$ on the entire interval $[0,1]$}, the polynomial on the right-hand side of \eqref{result sec, equation of M(n,k,p) in second last interval} is not an upper bound \tcb{for} $M(n,k,p)$. In all of the examples we have checked, the polynomial is negative on some sub-interval of $[0,1]$.
\item It follows from \eqref{result sec, equation of f(2,k-4)} and \eqref{result sec, equation2 of f(2,k-4)} that the smallest $\overline{p}_2$ in Theorem \ref{resuls sec, theorem on second last interval 1} and the largest $p_2$ in Theorem \ref{resuls sec, theorem on second interval 1} are symmetric with respect to the point $p=1/2$, namely $\overline{p}_2= 1- p_2$.
\end{enumerate}
}
\end{remark}

\begin{remark}\emph{
Theorems \ref{resuls sec, theorem on last half}, \ref{resuls sec, theorem on second last interval 1}, and the numerical data suggest that, for even $k$ between $2$ and $n-2$, there exists a point $\overline{p}_3<\overline{p}_2$
such that
$$M(n,k,p)= B(n,k,1-p) + \frac{\binom{n}{k+1}}{\binom{n-k+3}{4}}(1-p)^{k-3}B(n-k+3,3,1-p),\qquad p\in [\overline{p}_{3}, \overline{p}_{2}],$$
where the endpoints $\overline{p}_2, \overline{p}_3$ are symmetric, respectively, to the endpoints $p_2, p_3$ of the third interval with respect to the point $p=1/2$.
}
\end{remark}
\tcb{This concludes the section of the main results, all of which will be proven in  Section~\ref{k-wise proof section}.}
\section{Preliminaries}\label{preliminaries k-wise}
\tcb{In this section, we recall the necessary preliminaries for the proofs in Section \ref{k-wise proof section}.}
\subsection{Bonferroni Inequalities}\label{subsection Bonferroni}
Let $A_1, A_2,\ldots, A_n$ be arbitrary events in a probability space $(\Omega, \mathcal{B},\P)$. Let $N$ be the (random) number of events $A_i$-s that occur. 
Denote
\begin{equation}\label{pre, equation of Si for arbitrary events}
S_t= \sum_{1\leq i_1<i_2<\cdots<i_t\leq n} \P(A_{i_1}\cap A_{i_2}\cap\cdots\cap A_{i_t}),\qquad t=1,2,\ldots, n,  
\end{equation}
\tcb{where $S_0=1$}.
{\it Bonferroni's inequalities}, first proved by Bonferroni \cite{Bonferroni} (see also \cite{Spencer}), state that
\begin{equation}\label{pre, equation of  Bonferroni inequalities}
\sum_{i=0}^{2m+1}(-1)^i S_i\leq \P(N=0)\leq   \sum_{i=0}^{2m}(-1)^i S_i,\qquad m\geq 0.  
\end{equation}
Inequalities of the form
\begin{equation}\label{pre, equation of  Bonferroni type inequalities}
\sum_{i=0}^{n}c_i S_i\leq \P(N=r)\leq   \sum_{i=0}^{n}d_i S_i, 
\end{equation}
where $c_i= c_i(r,n)$ and $d_i= d_i(r,n)$ are constants (unrelated to the specific events $A_j$, $1\leq j\leq n$), possibly zero, are {\it Bonferroni-type inequalities} (see \cite{Galambos1}). Such inequalities have a long history (cf. \cite{Bonferroni, Prekopa2, Galambos1, M Lee, Galambos2}). An extension of \eqref{pre, equation of  Bonferroni inequalities} is given by {\it Jordan's inequalities}, which are as follows (cf. \cite[p.8]{M Lee})
\begin{equation*}\label{pre, equation of  Jordan inequalities}
\sum_{i=0}^{2m+1}(-1)^i \binom{i+r}{r}S_{i+r}\leq \P(N=r)\leq   \sum_{i=0}^{2m}(-1)^i\binom{i+r}{r} S_{i+r},\qquad 0\leq r\leq n,\, m\geq 0.  
\end{equation*}

It follows from \cite[Theorem 2]{Galambos1} that \eqref{pre, equation of  Bonferroni type inequalities} holds for arbitrary events $A_1,A_2,\ldots,A_n$ if and only it holds for exchangeable events. In other words, \tcb{the inequality in} \eqref{pre, equation of  Bonferroni type inequalities} holds for arbitrary $A_1,A_2,\ldots, A_n$ if and only if
\begin{equation*}
\sum_{i=0}^n c_i\binom{n}{k}P_k\leq \P(N'=r)\leq  \sum_{i=0}^n d_i\binom{n}{k}P_k  \end{equation*}
for arbitrary exchangeable events $C_1,C_2,\ldots,C_n$, where $N'$ is the number of events $C_i$ that occur and $P_k= \P(C_{i_i}\cap C_{i_2}\cap\cdots \cap C_{i_k})$. Furthermore, the \tcb{inequality in} \eqref{pre, equation of  Bonferroni type inequalities} holds for arbitrary events $A_1, A_2,\ldots, A_n$ if and only if it holds when the $A_j$'s are independent and $P(A_j)=p$ for each $1\leq j\leq n$ (see \cite[Theorem 3]{Galambos1}).
\vspace{0.25 cm}

Let $X_1,X_2,\ldots,X_n$ be $\P$-distributed random bits, where $\P\in A_E(n,k,p)$, namely $\P(X_i=1)=p$ and the $X_i$-s are $k$-wise independent. Let $A_i=\{X_i=1\}$, $1\leq i\leq n$. By~\eqref{pre, equation of Si for arbitrary events},
\begin{equation}\label{pre sec, equation of Si in term of (1-p)}
 S_t= \binom{n}{t}p^t,\qquad t=1,2,3,\ldots,k.
 \end{equation}
\tcb{Employing \eqref{pre, equation of  Bonferroni inequalities} and \eqref{pre sec, equation of Si in term of (1-p)}, we obtain}
\begin{equation}\label{pre sec, equation of upper bound of P(x1,..Xn) as expresion}
\begin{split}
\P(N=0)&\begin{cases}
 \leq B(n,k,p)=1-\binom{n}{1}p + \cdots + \binom{n}{k}p^k, & \quad k \equiv 0\, (\textup{mod}\, 2),\\
 \geq B(n,k,p)= 1-\binom{n}{1}p + \cdots - \binom{n}{k}p^k,&\quad k \equiv 1\, (\textup{mod}\, 2),\\
\end{cases} 
\end{split}
\end{equation}
which explains the term ``Bonferroni polynomials'' introduced in Section \ref{result section}. \tcb{We refer the reader to} \cite{Galambos2} for further details on Bonferroni inequalities.

Several useful properties of Bonferroni polynomials are listed in the following lemma. \tcb{So as to ease the flow of presentation, the proof is postponed to Section \ref{k-wise proof section}.}
\begin{lemma}\label{prelim, lemma for Bonferroni polynomial}
For any positive integers $0\leq k\leq n$,
\begin{description}
\item {1.} \vspace{-0.8 cm} \begin{equation*}
\begin{split}
 B(n,k,p)&= B(n-1,k,p)- pB(n-1,k-1,p)\\  
 &=(1-p)B(n-1,k-1,p)+ (-1)^k \binom{n-1}{k}p^k.
\end{split}    
\end{equation*}
\item {2.} \vspace{-0.6 cm} $$\frac{d}{d p} B(n,k,p)= -nB(n-1,k-1,p).$$
\item {3.}
\vspace{-0.6 cm}\begin{equation}\label{prilim sec, equation for Bonferroni polynomial in terms of others}
B(n,k,p)= B(k,k,p)- \sum_{i=0}^{k-1}\binom{n-k}{k-i} p^{k-i} B(n-k+i,i,p).    
\end{equation}
\end{description}
\end{lemma}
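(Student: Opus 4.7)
My plan is to handle the three parts separately; parts 1 and 2 are direct, while part 3 requires some bookkeeping.

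\textbf{Part 1.} Starting from the defining sum $B(n,k,p)=\sum_{j=0}^{k}(-1)^{j}\binom{n}{j}p^{j}$ and applying Pascal's identity $\binom{n}{j}=\binom{n-1}{j}+\binom{n-1}{j-1}$, I would split the sum into two pieces; re-indexing the second piece by $j\mapsto j+1$ yields the first recurrence $B(n,k,p)=B(n-1,k,p)-p\,B(n-1,k-1,p)$. The second equality then follows by observing that $B(n-1,k,p)-B(n-1,k-1,p)=(-1)^{k}\binom{n-1}{k}p^{k}$ (the top term of $B(n-1,k,p)$) and substituting into the first recurrence.

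\textbf{Part 2.} Differentiating $B(n,k,p)$ term-by-term and using $j\binom{n}{j}=n\binom{n-1}{j-1}$, followed by a shift of the summation index, gives the claim immediately.

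\textbf{Part 3.} I would compare coefficients of $p^{t}$ on both sides for each $0\le t\le k$. On the left, the coefficient is $(-1)^{t}\binom{n}{t}$. On the right, $B(k,k,p)$ contributes $(-1)^{t}\binom{k}{t}$, while the $i$-th summand $\binom{n-k}{k-i}p^{k-i}B(n-k+i,i,p)$ contributes to $p^{t}$ only for $\max(0,k-t)\le i\le k-1$; substituting $r=k-i$ then reduces the required equality of coefficients to the classical identity
\begin{equation*}
\sum_{r=0}^{t}(-1)^{r}\binom{n-k}{r}\binom{n-r}{t-r}=\binom{k}{t}.
\end{equation*}
I would establish this identity by inclusion-exclusion: in an $n$-element set partitioned into a distinguished $k$-subset $A$ and its $(n-k)$-element complement $B$, the $t$-subsets of $A$ are precisely the $t$-subsets of the full $n$-set disjoint from $B$; evaluating this count by inclusion-exclusion over the subsets of $B$, and using that a $t$-subset containing a prescribed $r$-element subset of $B$ can be completed in $\binom{n-r}{t-r}$ ways, produces exactly the displayed alternating sum.

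The main obstacle is the index bookkeeping in part 3: one must recognize that the substitution $r=k-i$ converts the truncated Bonferroni sums on the right-hand side into the inclusion-exclusion identity above. Once that reduction is in hand, the remainder of the argument is routine.
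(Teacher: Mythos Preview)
Your arguments for Parts 1 and 2 are correct and match the paper's proof essentially line for line: Pascal's rule followed by peeling off the top term for Part 1, and term-by-term differentiation with $j\binom{n}{j}=n\binom{n-1}{j-1}$ for Part 2.

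For Part 3 your proof is correct but genuinely different from the paper's. The paper proceeds by induction on $k$: it notes that both sides agree at $p=0$, then uses Part 2 to compute the derivative of each side, and after some algebraic regrouping of the right-hand derivative (shifting indices and combining the binomial factors) invokes the induction hypothesis for $k-1$ to conclude. Your approach instead compares the coefficient of $p^{t}$ directly and reduces everything to the Vandermonde-type identity $\sum_{r=0}^{t}(-1)^{r}\binom{n-k}{r}\binom{n-r}{t-r}=\binom{k}{t}$, which you justify by a clean inclusion--exclusion count of $t$-subsets of a $k$-set inside an $n$-set. Your route is more self-contained (no induction, no appeal to Part 2) and exposes the underlying combinatorics explicitly; the paper's route is slicker in that it leverages the already-proved derivative formula and avoids having to recognise or prove a separate binomial identity. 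Both are short; yours is arguably the more elementary of the two.
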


\tcb{Another} essential inequality concerning the Bonferroni polynomials, crucial for us in the sequel, is presented in the following proposition. \tcb{So as to ease the flow of presentation, its proof is postponed to Section \ref{k-wise proof section}.}

\begin{prop}\label{result, proposition for B(n,l,p)}
Let $0\leq l\leq m$. Then
\begin{equation}\label{result sec, positivity of B(n,l,p) in first half}
B(m,l,p)\geq 0, \qquad  0\leq p\leq \frac{1}{m-l+1}.    
\end{equation}
Moreover, the inequality in \eqref{result sec, positivity of B(n,l,p) in first half} is strict except for the following cases:
\begin{description}
\item {1.} $l=1$ and $p=1/m$.
\item {2.} $m$ is odd, $l=m-1$, and $p=1/2$.
\item {3.} $l=m$ and $p=1$.
\end{description}
\end{prop}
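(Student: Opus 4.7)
The plan is to derive a compact alternating-sum representation of $B(m,l,p)$ and then extract the proposition from the standard alternating-series machinery. Starting from the second identity in Lemma~\ref{prelim, lemma for Bonferroni polynomial}.1, namely $B(m,l,p)=(1-p)B(m-1,l-1,p)+(-1)^{l}\binom{m-1}{l}p^{l}$, I would iterate $l$ times, each application lowering both arguments by one, until the recursion bottoms out at $B(m-l,0,p)=1$. After reindexing, this yields
\[
B(m,l,p)\;=\;\sum_{i=0}^{l}(-1)^{i}\,a_{i}(p),\qquad a_{i}(p)\;:=\;\binom{m-l-1+i}{i}p^{i}(1-p)^{l-i},
\]
an alternating sum whose terms are non-negative for $p\in[0,1]$.

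The central computation is the ratio
\[
\frac{a_{i+1}(p)}{a_{i}(p)}\;=\;\frac{m-l+i}{i+1}\cdot\frac{p}{1-p},
\]
which at $i=0$ equals $(m-l)p/(1-p)$ and is at most $1$ precisely when $p\leq 1/(m-l+1)$. Since $(m-l+i)/(i+1)$ is non-increasing in $i$, the entire sequence $\{a_{i}\}_{i=0}^{l}$ is non-increasing on $[0,1/(m-l+1)]$. The classical pairing
\[
B(m,l,p)\;=\;(a_{0}-a_{1})+(a_{2}-a_{3})+\cdots
\]
(with a leftover $+a_{l}$ when $l$ is even) then makes every summand non-negative, delivering \eqref{result sec, positivity of B(n,l,p) in first half}.

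For the equality claim, strict decrease of $\{a_{i}\}$ holds throughout the open interval $p\in(0,1/(m-l+1))$, so every pair is strictly positive there and $B(m,l,p)>0$; combined with $B(m,l,0)=1$, this confines possible equality to $p_{0}=1/(m-l+1)$. At $p_{0}$ one has $a_{0}=a_{1}$, annihilating the first pair, and when $l\leq m-2$ every subsequent ratio is strictly less than $1$, so the remaining pairs and (for even $l$) the trailing $a_{l}$ are strictly positive, giving $B(m,l,p_{0})>0$ except when only the single pair $(a_{0}-a_{1})$ is present, i.e.\ $l=1$, which is case~(1). The two remaining regimes are $l=m-1$ (so $m-l=1$: at $p_{0}=1/2$ every ratio equals $1$, all $a_{i}$ take a common positive value $a$, and the alternating sum collapses to $a\sum_{i=0}^{l}(-1)^{i}$, which vanishes exactly by the parity condition in case~(2)) and $l=m$ (where $p_{0}=1$ forces every $a_{i}=0$, yielding case~(3)). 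The only step requiring genuine care is executing the iteration cleanly to obtain the representation; once it is in hand, the positivity and the equality characterization reduce to the sign/parity bookkeeping just sketched.
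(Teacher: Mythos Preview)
Your proof is correct and takes a genuinely different route from the paper's. The paper proceeds by parity: for even $l$ it invokes the classical Bonferroni bound $B(m,l,p)\geq(1-p)^{m}$ directly, while for odd $l$ it disposes of the edge cases $l=1$, $l=m-1$, $l=m$ by hand and then, for $3\leq l\leq m-2$, uses the monotonicity of $B(m,l,\cdot)$ (via Lemma~\ref{prelim, lemma for Bonferroni polynomial}.2) to reduce to checking strict positivity at the single endpoint $p_{0}=1/(m-l+1)$; that final check is carried out through the estimate $B(m,l,p_{0})\geq(1-p_{0})^{m}-\binom{m}{l}p_{0}^{l}$ and a somewhat delicate product inequality, with numerical verification still required for a handful of small $(m,l)$. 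Your alternating-sum representation $B(m,l,p)=\sum_{i=0}^{l}(-1)^{i}\binom{m-l-1+i}{i}p^{i}(1-p)^{l-i}$, obtained by iterating Lemma~\ref{prelim, lemma for Bonferroni polynomial}.1, bypasses all of this: once monotonicity of the terms is read off from the ratio $\frac{a_{i+1}}{a_i}=\frac{m-l+i}{i+1}\cdot\frac{p}{1-p}$, a single pairing argument covers both parities uniformly and pins down the equality cases without residual estimates or case-by-case numerics. Your approach is cleaner and more self-contained; the paper's has the minor advantage of leaning on an off-the-shelf Bonferroni bound rather than deriving a new identity, but pays for it with the case splitting and the numerical loose ends. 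One small remark: your claim that $(m-l+i)/(i+1)$ is non-increasing in $i$ requires $l<m$; in the boundary case $l=m$ the coefficients $\binom{i-1}{i}$ vanish for $i\geq1$, so $B(m,m,p)=a_{0}=(1-p)^{m}$ and nothing is lost, but it is worth flagging.
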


\vspace{0.25 cm}

\subsection{$M(n,k,p)$ as the Solution of a Linear Programming Problem}\label{subsection on LP}
Let $X_1,X_2,\ldots,X_n$ be $\P$-distributed random bits, where $\P\in A_E(n,k,p)$. Let $A_i=\{X_i=1\}$, $1\leq i\leq n$. Let $N$ be the number of events $A_i$ \tcb{which} occur.
By \eqref{result sec, equation of vi} and \eqref{pre, sec, equation sum of vi equal to pi in the first form}, \tcb{one may recast the problem of calculating $M(n,k,p)$} for given $n,k,p$ as a primal linear programming problem, as follows:
\begin{equation}\label{pre sec, equation of LP primal in k-wise independent}
\begin{array}{ccccc}
 \textup{maximize} &v_n && &\\
\vspace{0.25 cm}
\textup{subject to}& &&&\\
 & \displaystyle \sum_{j=0}^n \binom{n-i}{j-i} v_j&=&p^i,& \qquad i=0,1,2,\ldots, k,\\
\vspace{0.25 cm}
 & v_j&\geq &0,& \qquad j=0,1,2,\ldots,n.
\end{array}    
\end{equation}
\tcb{Letting}
$$w_i= \binom{n}{i}v_i, \qquad i=0,1,2,\ldots,n,$$
and
$$S_i= \binom{n}{i}p^i, \qquad i=0,1,2,\ldots,k,$$ 
we may rewrite \eqref{pre sec, equation of LP primal in k-wise independent} in the form:
\begin{equation}\label{pre sec, equation of LP primal}
\begin{array}{ccccc}
 \textup{maximize} &w_n && &\\
\vspace{0.25 cm}
 \textup{subject to}& &&&\\
 & \displaystyle \sum_{j=0}^n \binom{j}{i} w_j&=&S_i,& \qquad i=0,1,2,\ldots, k,\\
\vspace{0.25 cm}
 & w_j&\geq &0,& \qquad j=0,1,2,\ldots,n.
\end{array}    
\end{equation}
\indent \tcb{We now pause to introduce some notation which will enable us to rewrite} \eqref{pre sec, equation of LP primal} in a more compact form.
\begin{equation}\label{pre sec, equation for notation of ai and b and A}
\begin{array}{cclc}
a_i&=& \displaystyle \left(1,i,\binom{i}{2},\ldots, \binom{i}{k}\right)^T \in \R^{k+1},&\qquad i=0,1,2,\ldots,n, \\
\vspace{0.25 cm}
A&=& \displaystyle [a_0,a_1,\ldots, a_n]\in M_{(k+1)\times (n+1)},&  \\
\vspace{0.25 cm}
w&=& \displaystyle (w_0,w_1,\ldots,w_n)^T,&\\
\vspace{0.25 cm}
c&=& \displaystyle (0,0,\ldots,0,1)^T\in \R^{n+1},\\
\vspace{0.25 cm}
d&=& \displaystyle (S_0,S_1,\ldots,S_k)^T\in \R^{n+1},
\end{array} 
\end{equation}
where $T$ denotes matrix transpose.
With \tcb{this notation}, \eqref{pre sec, equation of LP primal} becomes
\begin{equation}\label{pre sec, equation of LP primal in matrix form}
\begin{array}{ccccc}
 \textup{maximize}&c^{T}\cdot w && &\\
\vspace{0.25 cm}
 \textup{subject to}& &&&\\
 & A\cdot w&=&d,&\\
\vspace{0.25 cm}
 & w&\geq &0.&
\end{array}    
\end{equation}
The dual of \eqref{pre sec, equation of LP primal in matrix form} is
\begin{equation}\label{pre sec, equation of LP dual in matrix form}
\begin{array}{ccccc}
 \textup{minimize}&d^{T}\cdot y && &\\
\vspace{0.25 cm}
 \textup{subject to}& &&&\\
 & A^T\cdot y&\geq&c.&\\
\end{array}    
\end{equation}

\tcb{We now mention some definitions and recall some key results} of Boros and Pr{\'e}kopa \cite{Prekopa1}, and Pr{\'e}kopa \cite{Prekopa2,Prekopa3}, who considered a more general problem than \eqref{pre sec, equation of LP primal in matrix form}.
\tcb{With slight abuse of notation}, let $B$ be a basis consisting of $k+1$ columns of $A$. \tcb{Then} $B$ is {\it dual feasible} if 
\begin{equation}\label{pre sec, equation of condition for dual nondegenerate}
c_B^T B^{-1}a_j\geq c_j, \qquad j\in \{0,1,\ldots,n\}-I,   \end{equation}
where $I$ is the set of subscripts of the basis vectors and $c_B$ is the vector of basic components of $c$. The basis $B$ is {\it dual nondegenerate} if the inequality in \eqref{pre sec, equation of condition for dual nondegenerate} is strict for every $j$.

Let $I= \{i_0,i_1,\ldots,i_k\}$, $0\leq i_0<i_1<\ldots<i_k\leq n$. Denote
$$B(I)= [a_{i_0},a_{i_1},\ldots, a_{i_k}].$$
It follows from \cite[Theorem 7.1]{Prekopa1} that 
$$B(I)^{-1}= [b_{s,t}]_{s\in I}^{t=0,1,\ldots,k},$$ where
\begin{equation}\label{pre sec, equation of elements of matrix B}
b_{s,t} =  \frac{(-1)^t \sum_{\alpha=0}^t (-1)^\alpha \binom{t}{\alpha}\prod_{u\in I-\{s\}}(u- \alpha)}{\prod_{u\in I-\{s\}}(u- s)},\qquad s\in I,\, 0\leq t\leq k. \end{equation}

\begin{thm}\emph{\cite[Theorem 7.2]{Prekopa1}}\label{pre sec, theorem on I} A basis $B(I)$, corresponding to problem \eqref{pre sec, equation of LP primal in matrix form}, is dual feasible if and only if $I$ satisfies the following conditions:
\begin{enumerate}
\item $n\in I$.
\item If $k$ is odd then $0\in I$.
\item The elements of $I-\{n\}$ in the case of even $k$, and of $I-\{0,n\}$ in the case of odd~$k$, come in pairs of consecutive numbers. Namely,
\begin{equation}\label{pre sec, equation of I in theorem}
\begin{split}
I= \begin{cases}
\{i_1,i_1+1,i_2,i_2+1,\ldots, i_{k/2},i_{k/2}+1,n\},&\qquad k \equiv 0\, (\textup{mod}\, 2),\\ 
\{0, i_1,i_1+1,i_2,i_2+1,\ldots, i_{(k-1)/2},i_{(k-1)/2}+1,n\},&\qquad k \equiv 1\, (\textup{mod}\, 2),\\
\end{cases}    
\end{split}    
\end{equation}
where $i_{j+1}\geq i_{j}+2$ for each $j$.
\end{enumerate}
\end{thm}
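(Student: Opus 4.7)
The plan is to reduce the dual-feasibility inequalities to a single sign condition on a Lagrange interpolation polynomial, and then read off the combinatorial structure of $I$ from that sign pattern.

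First, I would dispatch condition (1) directly. Since the cost vector is $c=(0,\ldots,0,1)^{T}$, the subvector $c_B$ of basic components vanishes identically whenever $n\notin I$. But then $c_B^{T}B(I)^{-1}a_j=0$ for every $j$, which violates \eqref{pre sec, equation of condition for dual nondegenerate} at $j=n$ (where $c_n=1$). Hence $n\in I$ is forced.

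Next, assume $n\in I$ and write $I=\{i_0<i_1<\cdots<i_k\}$ with $i_k=n$. Then $c_B$ has a single $1$ in its last slot, so $c_B^{T}B(I)^{-1}$ is the last row of $B(I)^{-1}$, and the dual slack at column $j$ is
$$L(j)\;:=\;c_B^{T}B(I)^{-1}a_j\;=\;\sum_{t=0}^{k}b_{n,t}\binom{j}{t}.$$
Because $B(I)^{-1}a_{i_r}$ equals the $r$-th standard basis vector, $L$ vanishes on $I\setminus\{n\}$ and equals $1$ at $j=n$. As a polynomial in $j$ of degree at most $k$ interpolating $k+1$ prescribed values, $L$ is uniquely determined:
$$L(j)\;=\;\prod_{u\in I\setminus\{n\}}\frac{j-u}{n-u}.$$
Every denominator factor is positive (since $n=\max I$), so the feasibility condition $L(j)\geq 0$ for all $j\in\{0,1,\ldots,n\}\setminus I$ collapses to the purely integer-sign statement
$$P(j)\;:=\;\prod_{u\in I\setminus\{n\}}(j-u)\;\geq\;0,\qquad j\in\{0,1,\ldots,n\}\setminus I.$$

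The remaining work is then a combinatorial sign analysis. Writing $J=I\setminus\{n\}=\{u_1<u_2<\cdots<u_k\}$, the polynomial $P$ is positive on the strip $(u_k,n)$ and its sign flips at each $u_r$ as $j$ decreases. Thus $P$ is \emph{negative} precisely on the open intervals $(u_{k-1},u_k),(u_{k-3},u_{k-2}),\ldots$, and dual feasibility is exactly the statement that each of those forbidden intervals contains no element of $\{0,\ldots,n\}\setminus I$, that is, its endpoints are two consecutive integers. For even $k$ this forces the pairing $u_2=u_1+1,\,u_4=u_3+1,\ldots,u_k=u_{k-1}+1$, which is precisely \eqref{pre sec, equation of I in theorem}. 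For odd $k$, the leftmost unbounded piece $\{j<u_1\}$ also sits in the negative family, so the only way to exclude the integer $j=0$ is to demand $u_1=0$, i.e.\ $0\in I$ (condition (2)); the remaining $k-1$ elements $u_2,\ldots,u_k$ then pair up consecutively as above. The converse direction is immediate: for any $I$ satisfying (1)--(3) the same sign analysis shows $P(j)\geq 0$ at every admissible integer, so $B(I)$ is dual feasible.

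The main obstacle is the identification of $L(j)$ as the Lagrange polynomial; this is the one step that uses the specific structure of the columns $a_j=(1,j,\binom{j}{2},\ldots,\binom{j}{k})^{T}$ (which span the degree-$\leq k$ polynomials in $j$) together with the closed form for $b_{n,t}$ in \eqref{pre sec, equation of elements of matrix B}. Once this identification is clear, the pairing conclusion and the extra $0\in I$ condition for odd $k$ follow mechanically from counting sign changes across the $u_r$.
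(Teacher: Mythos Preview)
The paper does not actually prove this theorem; it is quoted from \cite[Theorem 7.2]{Prekopa1} as a preliminary result and used without proof. So there is no ``paper's own proof'' to compare against.

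That said, your argument is correct and is essentially the standard one. The key identification
\[
L(j)=c_B^{T}B(I)^{-1}a_j=\prod_{u\in I\setminus\{n\}}\frac{j-u}{n-u}
\]
is exactly right: it does not even require the explicit entries $b_{n,t}$ from \eqref{pre sec, equation of elements of matrix B}, only that $\{\binom{j}{0},\binom{j}{1},\ldots,\binom{j}{k}\}$ is a basis for polynomials of degree $\le k$ in $j$ together with $B(I)^{-1}a_{i_r}=e_r$. The subsequent sign counting is clean; in particular your handling of the leftmost region $\{j<u_1\}$ for odd $k$ (forcing $u_1=0$) and the pairing $u_{2r}=u_{2r-1}+1$ for even $k$ (respectively $u_{2r+1}=u_{2r}+1$ for odd $k$) matches the statement exactly. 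One tiny stylistic point: in the converse direction you might also note explicitly that the condition $i_{j+1}\ge i_j+2$ merely ensures the listed elements are distinct (so $|I|=k+1$), not an additional constraint to verify.
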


\begin{thm}\emph{\cite[Theorem 7.3]{Prekopa1},\cite[Theorem IV.1.]{Galambos2}}\label{pre sec, theorem on B(I) tight bound}
Let $I\subseteq \{0,1,2,\ldots,n\}$ be a set of size $k+1$ of the type in \eqref{pre sec, equation of I in theorem}. Then 
\begin{equation}\label{proof sec, equation of P(u=n)}
M(n,k,p)\leq \sum_{i=0}^k b_{n,i} S_i,
\end{equation}
where the $b_{n,i}$ are as in \eqref{pre sec, equation of elements of matrix B}. Moreover, If 
$B(I)^{-1}d\geq 0$, the bound in \eqref{proof sec, equation of P(u=n)} is tight, and therefore
$$M(n,k,p)= \sum_{i=0}^k b_{n,i} S_i.$$
\end{thm}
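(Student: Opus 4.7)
The plan is to derive the bound and its tightness directly from linear programming duality applied to the pair \eqref{pre sec, equation of LP primal in matrix form}--\eqref{pre sec, equation of LP dual in matrix form}. First I would construct, from the basis $B=B(I)$, the associated basic dual solution $y^*=(B^T)^{-1}c_B$, where $c_B$ is the restriction of $c$ to the coordinates indexed by $I$. Since $I$ is of the type in \eqref{pre sec, equation of I in theorem}, Theorem~\ref{pre sec, theorem on I} tells us $n\in I$, so $c_B$ has a single nonzero entry (a $1$ in the coordinate corresponding to $n$). The dual feasibility condition \eqref{pre sec, equation of condition for dual nondegenerate} is exactly $A^T y^*\geq c$ written out column by column, so $y^*$ is feasible for the dual \eqref{pre sec, equation of LP dual in matrix form}.

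Next I would compute the dual objective value at $y^*$. We have
\begin{equation*}
d^T y^* \;=\; c_B^T B^{-1} d,
\end{equation*}
and because $c_B$ picks out only the row of $B^{-1}$ indexed by $n$, this evaluates to
\begin{equation*}
\sum_{i=0}^k b_{n,i}\,S_i,
\end{equation*}
with $b_{n,i}$ as in \eqref{pre sec, equation of elements of matrix B}. Weak LP duality then yields, for every primal feasible $w$,
\begin{equation*}
c^T w \;=\; w_n \;\leq\; d^T y^* \;=\; \sum_{i=0}^k b_{n,i}\,S_i,
\end{equation*}
and maximizing the left-hand side over $w\in A_E(n,k,p)$ gives the bound \eqref{proof sec, equation of P(u=n)}.

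For the tightness statement, I would consider the primal basic solution associated with the same basis, namely the vector $w^*$ whose $I$-coordinates are $w^*_B=B^{-1}d$ and whose other coordinates are zero. By construction $A w^*=d$, so if the hypothesis $B(I)^{-1}d\geq 0$ holds, $w^*$ is primal feasible. Its primal objective value is
\begin{equation*}
c^T w^* \;=\; c_B^T\,B^{-1}d \;=\; \sum_{i=0}^k b_{n,i}\,S_i,
\end{equation*}
matching the dual objective at $y^*$. Strong duality (or direct verification of complementary slackness, since both solutions sit on the same basis) then forces equality in \eqref{proof sec, equation of P(u=n)}, yielding $M(n,k,p)=\sum_{i=0}^k b_{n,i}S_i$.

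The only step requiring any genuine care is the bookkeeping showing that $c_B^T B^{-1}d$ collapses to the single row $n$ of $B^{-1}$ paired against $d$; everything else is the standard primal–dual template, and the explicit formula \eqref{pre sec, equation of elements of matrix B} for the entries of $B^{-1}$ is already at hand from Prékopa's setup. The hypothesis $n\in I$ guaranteed by Theorem~\ref{pre sec, theorem on I} is the key structural fact that makes the dual objective depend only on the $n$-th row of $B^{-1}$.
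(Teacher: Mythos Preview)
The paper does not give its own proof of this statement; it is quoted as a preliminary result from \cite{Prekopa1} and \cite{Galambos2} without argument. Your LP duality proof is correct and is essentially the standard one: Theorem~\ref{pre sec, theorem on I} certifies that the basic dual vector $y^*=(B^T)^{-1}c_B$ is feasible for \eqref{pre sec, equation of LP dual in matrix form}, and since $n\in I$ the vector $c_B$ has a single nonzero entry so that $d^Ty^*=c_B^TB^{-1}d=\sum_{i=0}^k b_{n,i}S_i$; weak duality then gives \eqref{proof sec, equation of P(u=n)}, and when $B(I)^{-1}d\ge 0$ the associated basic primal solution is feasible with the same objective value, forcing equality. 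There is nothing to correct.
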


\begin{thm} \emph{\cite[Theorem 6]{Prekopa3}}\label{pre sec, theorem on basis change}
Let $B(I)$ be a dual feasible basis corresponding to \eqref{pre sec, equation of LP primal in matrix form}. For $i\in I$, if all the bases $B(I_{i,j})$, where
$$I_{i,j}= (I-\{i\})\cup \{j\}, \qquad j\in \{0,1,\ldots,n\}-I,$$
are not dual feasible basis, then we will have $w_i\geq 0$.
\end{thm}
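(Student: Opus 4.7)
The plan is to prove the contrapositive: assuming $w_i<0$ for some $i\in I$, I will produce an index $j\in\{0,1,\ldots,n\}-I$ such that $B(I_{i,j})$ is again dual feasible, contradicting the hypothesis. The construction is precisely one iteration of the \emph{dual simplex method}: at a dual feasible but primal-infeasible basis, a ratio test identifies an entering column whose pivot preserves dual feasibility.

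Write $w=B(I)^{-1}d$, and for each $j\in\{0,1,\ldots,n\}-I$ set
\[
\rho_j=(B(I)^{-1}a_j)_i,\qquad \bar c_j=c_B^{T}B(I)^{-1}a_j-c_j,
\]
so that dual feasibility of $B(I)$ from \eqref{pre sec, equation of condition for dual nondegenerate} reads $\bar c_j\geq 0$ for every $j\notin I$. First I would show that $\rho_j<0$ for at least one $j\notin I$. To this end, compare $w$ with the fully-independent feasible point $w^{*}_j=\binom{n}{j}p^j(1-p)^{n-j}>0$ of \eqref{pre sec, equation of LP primal}: since $A(w-w^{*})=0$, multiplying by the row of $B(I)^{-1}$ indexed by $i$ gives
\[
w_i = w^{*}_i + \sum_{j\notin I}\rho_j\, w^{*}_j,
\]
and with $w^{*}>0$ entrywise and $w_i<0$, some $\rho_j$ must be strictly negative. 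Next, choose
\[
j^{*}\in \arg\min\left\{\frac{\bar c_j}{-\rho_j}\ :\ j\notin I,\ \rho_j<0\right\}
\]
and form $I_{i,j^{*}}=(I-\{i\})\cup\{j^{*}\}$. The updated reduced costs take the standard pivoted form
\[
\bar c_k' = \bar c_k - \bar c_{j^{*}}\cdot\frac{\rho_k}{\rho_{j^{*}}},\qquad k\notin I_{i,j^{*}},
\]
and a short case split on $\mathrm{sign}(\rho_k)$, combined with the minimality of the ratio at $j^{*}$, yields $\bar c_k'\geq 0$ in every case, so $B(I_{i,j^{*}})$ is dual feasible.

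The main obstacle is the sign-analysis in the last step: when $\rho_k\geq 0$ one obtains $\bar c_k'\geq 0$ for free from $\bar c_k\geq 0$ and $\rho_{j^{*}}<0$, but when $\rho_k<0$ the inequality $\bar c_k'\geq 0$ rearranges to $\bar c_k/(-\rho_k)\geq \bar c_{j^{*}}/(-\rho_{j^{*}})$, which is exactly the defining property of $j^{*}$. This dual-simplex correctness computation, while standard in LP theory, is the essential piece of the argument; the existence of a negative $\rho_j$, non-trivial in an abstract LP, is here dispatched by the concrete strictly positive i.i.d.\ point built into the convex structure of $A(n,k,p)$.
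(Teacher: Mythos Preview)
Your argument is correct. It is the standard one-step dual-simplex pivot: at a dual feasible basis with a negative basic component $w_i$, the ratio test over columns with $\rho_j<0$ selects an entering index $j^{*}$ such that the pivoted reduced costs remain nonnegative. Your verification of $\bar c_k'\geq 0$ by splitting on the sign of $\rho_k$ is exactly the textbook computation, and the fact that $\rho_{j^{*}}\neq 0$ guarantees $B(I_{i,j^{*}})$ is again a basis. The one place where the argument could fail in an abstract LP --- emptiness of the ratio set, signalling primal infeasibility --- you handle cleanly by exhibiting the strictly positive i.i.d.\ point $w^{*}_j=\binom{n}{j}p^{j}(1-p)^{n-j}$ (valid for $p\in(0,1)$, which is the paper's standing assumption) and deducing that some $\rho_j$ must be negative from $w_i=w^{*}_i+\sum_{j\notin I}\rho_j w^{*}_j$.

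As for comparison with the paper: there is nothing to compare. The paper does not prove this theorem; it quotes it verbatim from Pr{\'e}kopa \cite[Theorem~6]{Prekopa3} as a preliminary tool and moves on. Your write-up therefore supplies what the paper omits. Pr{\'e}kopa's original argument is likewise a dual-simplex step, but phrased for his general moment-problem LP; your use of the concrete interior feasible point specific to $A(n,k,p)$ is a pleasant shortcut over invoking a general primal-feasibility hypothesis.
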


All dual feasible bases for \eqref{pre sec, equation of LP primal in matrix form} are dual nondegenerate \cite[p.232]{Prekopa3}. Moreover, the optimal distribution is unique \cite[Theorem 5]{Prekopa3}. We refer the reader to \cite[Chapter IV]{Galambos2} for more details on dual feasibility of the basis $B(I)$ and related results.
\vspace{0.25 cm}

\section{Proofs}\label{k-wise proof section}
We first recall for later reference several well-known identities involving binomial coefficients (see \cite[Lemma 1.4.2]{Galambos3} and \cite[p.14]{Galambos2}):
\begin{enumerate}
\item \begin{equation}\label{proof sec, equation on the sum of binomial coefficients positive}
\sum_{i=0}^t (-1)^i \binom{s}{i}= (-1)^t\binom{s-1}{t}, \qquad s\geq 1,\, t\geq 0,
\end{equation}
\item
\begin{equation}\label{proof sec, equation on the sum of binomial coefficients equal to zero}
\sum_{i=0}^s (-1)^i \binom{s}{i}= 0, \qquad s\geq 0,  
\end{equation}
\item
\begin{equation}\label{proof sec, equation of elementary combinatorial identity}
\binom{n}{i}\binom{n-i}{j-i}= \binom{j}{i}\binom{n}{j},\qquad 0\leq i\leq j\leq n.
\end{equation}
\end{enumerate}
\subsection{Proof of Theorem \ref{resuls sec, theorem on upper bounds}}
\noindent 1. For $p\in [0,1]$, we have
\begin{equation}\label{proof sec, equation of constraints second interval}
\everymath{\displaystyle}
\setlength\arraycolsep{0.1pt}
\begin{array}{ccccccccccccc}
v_0 & + &\binom{n}{1}v_1 ~~& + & \binom{n}{2}v_2&+~\cdots~& + &\binom{n}{k}v_k & + &~\cdots ~& + &\binom{n}{n}v_n &=1,\\
&&&&&&&&&&&&\\
&&~~~~v_1 & +& \binom{n-1}{1}v_2~~~ &+~\cdots~& + &\binom{n-1}{k-1}v_k~~~ & + &~\cdots~ & + &\binom{n-1}{n-1}v_n &=p,\\
&&&&&& & &&\vdots & & &\\
&&&&&& &~~~~~~~~v_k & + &~\cdots~ & + &\binom{n-k}{n-k}v_n &=p^k.\\
\end{array}
\end{equation}
\tcb{We proceed to multiply the last $(m+1)$ equalities in \eqref{proof sec, equation of constraints second interval} by, respectively,}
$$\binom{n-k+m}{0},\, -\binom{n-k+m}{1},\, \binom{n-k+m}{2}, \, -\binom{n-k+m}{3}, \ldots, \binom{n-k+m}{m}.$$
\tcb{We then add up the resulting equalities, which gives an equality of the form}
\begin{equation}\label{proof sec, equation M(n,k,p) less than second interval}
\begin{split}
c_{k-m}v_{k-m} + c_{k-m +1} v_{k-m +1} +\cdots +  c_k v_k + \cdots + c_n v_n&= \sum_{i=0}^m (-1)^i \binom{n-k+m}{i}\cdot p^{k-m +i}\\
&= p^{k-m}\cdot B(n-k+m,m,p).
\end{split}
\end{equation}
This equality will yield an upper bound on $v_n$ if we find $c_n$ and show that all other coefficients $c_l$ on the left-hand side of \eqref{proof sec, equation M(n,k,p) less than second interval} are non-negative. Using  \eqref{proof sec, equation on the sum of binomial coefficients positive}, \eqref{proof sec, equation on the sum of binomial coefficients equal to zero}, and \eqref{proof sec, equation of elementary combinatorial identity}, we clearly have:
\begin{itemize}
\item 
  \begin{equation}\label{proof sec, equation of coefficient c(k-m)}
  c_{k-m}=1.   
\end{equation}
\item The following $m$ coefficients vanish because
\begin{equation}\label{proof sec, equation2 of ck in the second interval}
\begin{split}
   c_{k-m+j} & =\sum_{i=0}^j (-1)^i \binom{n-k+m}{i}\binom{n-(k-m+i)}{j-i}\\
   &= \sum_{i=0}^j (-1)^i \binom{j}{i}\binom{n-k+m}{j}\\
   &= \binom{n-k+m}{j} \cdot \sum_{i=0}^j (-1)^i \binom{j}{i}=0,\\
\end{split} \qquad j=1,2,\ldots,m.
\end{equation}
\item The final $n-k$ \tcb{coefficients} are strictly positive:
\begin{equation}\label{proof sec, equation3 of ck in the second interval}
\begin{split}
  c_{k+j}&= \sum_{i=0}^m (-1)^i \binom{n-k+m}{i}\binom{n-(k-m+i)}{k+j-(k-m+i)}\\
  &=\sum_{i=0}^m (-1)^i \binom{n-k+m}{i}\binom{n-k+m-i}{j+m-i}\\
  &=\sum_{i=0}^m (-1)^i \binom{j+m}{i}\binom{n-k+m}{j+m}\\
 &=\binom{n-k+m}{j+m} \cdot \sum_{i=0}^m (-1)^i \binom{j+m}{i}\\
 &=\binom{n-k+m}{j+m} \cdot \binom{j+m-1}{m}>0.\\  
\end{split}\qquad j=1,2,\ldots, n-k.
\end{equation}
\end{itemize}
In particular, $c_n=\binom{n-k+m -1}{m}$, and therefore
$$v_n\leq \frac{p^{k-m}}{\binom{n-k+m-1}{m}}\cdot B(n-k+m,m,p).$$
Hence,
$$M(n,k,p)\leq \frac{p^{k-m}}{\binom{n-k+m-1}{m}}\cdot B(n-k+m,m,p).$$
\vspace{0.25 cm}

\noindent 2.
Recall that
\begin{equation}\label{proof sec, equation of recalling definition of M(n,k,p)}
M(n,k,p)= \max_{\P\in A_E(n,k,p)}\P(X_1=\cdots=X_n=1). 
\end{equation}
Let $A_i=\{X_i=1\}$, $1\leq i\leq n$. It follows from \eqref{pre, equation of Si for arbitrary events} and \eqref{pre, equation of  Bonferroni inequalities} that
\begin{equation}\label{proof sec, equation of upper bound of v(n) last interval}
\begin{split}
\P(X_1=\cdots=X_n=1)&\leq \sum_{t=0}^k (-1)^t \sum_{1\leq i_1<i_2<\cdots<i_t\leq n} \P(A_{i_1}^C\cap A_{i_2}^C\cap\cdots\cap A_{i_t}^C)\\
&=\sum_{t=0}^k (-1)^t \binom{n}{t}(1-p)^t= B(n,k,1-p).
\end{split}
\end{equation}
By \eqref{proof sec, equation of recalling definition of M(n,k,p)} and \eqref{proof sec, equation of upper bound of v(n) last interval},
$$M(n,k,p)\leq B(n,k,1-p).$$ This concludes the proof.
\vspace{0.25 cm}

\subsection{Proof of Lemma \ref{prelim, lemma for Bonferroni polynomial}}
\noindent $1.$ We have
\begin{equation}\label{per sec, equation of B(n,k,p) using recurrence realtion}
\begin{split}
B(n,k,p)&= \sum_{i=0}^k (-1)^i\binom{n}{i}p^i =  \sum_{i=0}^k (-1)^i\left(\binom{n-1}{i}+ \binom{n-1}{i-1}\right)p^i\\  
&= \sum_{i=0}^k (-1)^i\binom{n-1}{i}p^i + \sum_{i=0}^k (-1)^i\binom{n-1}{i-1}p^i\\
&= \sum_{i=0}^k (-1)^i\binom{n-1}{i}p^i + \sum_{i=0}^{k-1} (-1)^{i+1}\binom{n-1}{i}p^{i+1}\\
&= B(n-1,k,p)- pB(n-1,k-1,p).
\end{split} 
\end{equation}
Simplifying the right-hand side of \eqref{per sec, equation of B(n,k,p) using recurrence realtion} further, we obtain
\begin{equation}\label{per sec, equation of B(n,k,p) using recurrence relation simplified}
\begin{split}
B(n,k,p)&= B(n-1,k-1,p)+ (-1)^k \binom{n-1}{k}p^k- pB(n-1,k-1,p)\\
&= (1-p)B(n-1,k-1,p) + (-1)^k \binom{n-1}{k}p^k.
\end{split} 
\end{equation}
\noindent $2.$ We have
\begin{equation*}
\begin{split}
\frac{d}{d p} B(n,k,p)&= \frac{d}{d p}\left(\sum_{i=0}^k(-1)^i\binom{n}{i}p^i\right) = \sum_{i=0}^k(-1)^i\binom{n}{i} i p^{i-1}= \sum_{i=1}^k(-1)^i n \binom{n-1}{i-1}p^{i-1}\\
&=-n  \sum_{i=0}^{k-1}(-1)^i \binom{n-1}{i}p^{i}= -n B(n-1,k-1,p).
\end{split}
\end{equation*}

\noindent $3.$ We use induction on $k$. For $k=0$, both sides of \eqref{prilim sec, equation for Bonferroni polynomial in terms of others} are 1. Assume that \eqref{prilim sec, equation for Bonferroni polynomial in terms of others} holds with $k-1$ instead of $k$. Note that both sides of \eqref{prilim sec, equation for Bonferroni polynomial in terms of others} are polynomials of degree at most~$k$ in $p$ and they coincide at $p=0$. Hence, it suffices to prove that the derivatives with respect to $p$ of both sides of \eqref{prilim sec, equation for Bonferroni polynomial in terms of others} coincide, namely that
\begin{equation}\label{proof sec, equality of two derivatives}
 \frac{d}{d p}B(n,k,p)= \frac{d}{d p}\left((1-p)^k- \sum_{i=0}^{k-1}\binom{n-k}{k-i} p^{k-i}B(n-k+i,i,p)\right). 
\end{equation}
By the preceding part, the right-hand side of \eqref{proof sec, equality of two derivatives} is
\begin{equation*}
\begin{split} 
& -k(1-p)^{k-1}- \sum_{i=0}^{k-1}\binom{n-k}{k-i}(k-i) p^{k-1-i}B(n-k+i,i,p)\\
&~~~ + \sum_{i=0}^{k-1}\binom{n-k}{k-i}(n-k+i) p^{k-i}B(n-k+i-1,i-1,p)\\
=& -k(1-p)^{k-1}- \sum_{i=0}^{k-2}\binom{n-k}{k-i}(k-i) p^{k-1-i}B(n-k+i,i,p)- (n-k)B(n-1,k-1,p)\\
&~~~+ \sum_{i=0}^{k-2}\binom{n-k}{k-(i+1)}(n-k+i+1) p^{k-(i+1)}B(n-k+i,i,p)\\
&~~~ + \binom{n-k}{k} (n-k) p^k B(n-k-1,-1,p)\\
=& - \sum_{i=0}^{k-2}\left(\binom{n-k}{k-i}(k-i)- \binom{n-k}{k-1-i}(n-k+i+1)\right)p^{k-1-i} B(n-k+i,i,p)\\
&~~~ -k(1-p)^{k-1} -(n-k)B(n-1,k-1,p) +0\\
=&-k(1-p)^{k-1}- \sum_{i=0}^{k-2}(-k)\binom{n-k}{k-1-i} p^{k-1-i}B(n-k+i,i,p)\\
&~~~ -(n-k)B(n-1,k-1,p)\\
=&-k\left((1-p)^{k-1}- \sum_{i=0}^{k-2}\binom{n-k}{k-1-i} p^{k-1-i}B(n-k+i,i,p)\right)\\
&~~~ - (n-k)B(n-1,k-1,p)\\
\end{split}   
\end{equation*}
By the induction hypothesis, the last expression reduces to 
$$-kB(n-1,k-1,p)- (n-k)B(n-1,k-1,p)= -nB(n-1,k-1,p).$$
In view of part 2 of the lemma, this proves \eqref{proof sec, equality of two derivatives}.
\vspace{0.25 cm}

\vspace{0.25 cm}

\subsection{Proof of Proposition \ref{result, proposition for B(n,l,p)}}
It follows from \cite[Lemma 1.3.2]{Galambos3} (see also \cite[p.562]{Galambos1}) that, for even $l$,
\begin{equation}\label{proof sec, equation of Bonferroni for even l}
 \binom{m}{1}p - \binom{m}{2}p^2 + \binom{m}{3}p^3 - \cdots -  \binom{m}{l}p^l \leq 1- (1-p)^m.   
\end{equation}
Thus, for even $l$, we have $B(m,l,p)\geq (1-p)^m$ for all $0\leq p\leq 1$. It remains to prove the proposition for odd $l$. We first deal with a few simple cases.
\begin{description}
\item {1.} If $l=1$, then $B(m,l,p)=1-mp$, which is trivially non-negative for $p\leq \frac{1}{m-1+1}=\frac{1}{m}$, and vanishes only at $p=\frac{1}{m}$.
\item {2.} If $l=m-1$, then $B(m,l,p)= (1-p)^m-p^m$. Since $p\leq \frac{1}{2}$, this is certainly nonnegative, vanishing only at $p=1/2$.
\item {3.} If $l=m$, then $B(m,l,p)=(1-p)^m$, which is trivially non-negative, vanishing only at $p=1$.
\end{description}
Now, let $l$ be odd, $3\leq l\leq m-2$. Let
$$p_1= \frac{1}{m-k+1}.$$
It follows from Lemma \ref{prelim, lemma for Bonferroni polynomial}$.2$ (since $l-1$ is even) that $\frac{d}{d p} B(m,l,p)$ is negative, and hence it suffices to check that the required inequality is satisfied and is strict at the right endpoint of the interval, namely that
$$1- \binom{m}{1}p_1 + \binom{m}{2}p_1^2- \cdots- \binom{m}{l}p_1^l> 0.$$
Since $l-1$ is even, we have by \cite[Lemma 1.3.2]{Galambos3}
\begin{equation*}
 \begin{split}
\binom{m}{1}p_1 -  \binom{m}{2}p_1^2 + \binom{m}{3}p_1^3 - \cdots -  \binom{m}{l-1}p_1^{l-1} &\leq 1- (1-p_1)^m,   
 \end{split}   
\end{equation*}
which yields
\begin{equation}\label{proof sec, equation of Bonferroni for even l-1}
\begin{split}
1-\binom{m}{1}p_1 +  \binom{m}{2}p_1^2 - \binom{m}{3}p_1^3 + \cdots +  \binom{m}{l-1}p_1^{l-1} -\binom{m}{l}p_1^{l}& \geq(1-p_1)^m-\binom{m}{l}p_1^{l}.\\
\end{split}  
\end{equation}
Thus, it suffices to show that for odd $l$
$$\binom{m}{l}p_1^l < (1-p_1)^m,$$
which is equivalent to 
\begin{equation}\label{proof sec, first equation right-hand side less than e}
\frac{m\cdot(m-1)\cdots (m-l+1)}{l!(m-l)^l} < \left(\frac{m-l}{m-l+1}\right)^{m-l}.    
\end{equation}
The right-hand side of \eqref{proof sec, first equation right-hand side less than e} is at least $1/e$, so that it suffices to show that
\begin{equation}\label{proof sec, for odd l equation less than e}
 \frac{m-l+1}{1\cdot (m-l)}\cdot \frac{m-l+2}{2\cdot (m-l)}\cdot \frac{m-l+3}{3\cdot (m-l)}\cdot\cdots\cdot \frac{m}{l\cdot (m-l)}= \prod_{j=1}^l\left(\frac{1}{j}+ \frac{1}{m-l}\right)< \frac{1}{e}.   
\end{equation}
We show this by distinguishing between several cases:
\begin{description}
\item {1.} $l\geq 7$: We have
$$\prod_{j=1}^l\left(\frac{1}{j}+ \frac{1}{m-l}\right)\leq \prod_{j=1}^7\left(\frac{1}{j}+ \frac{1}{2}\right)= \frac{9}{32}< \frac{1}{e}.$$
\item {2.} $l=3,5$ and $m\geq 10$: For every fixed $l$, as $m$ increases the product in \eqref{proof sec, for odd l equation less than e} consists of the same number of factors, but they become smaller and smaller. Therefore, if~\eqref{proof sec, for odd l equation less than e} holds for some $(m,l)$, then it must hold for $(m+1,l),(m+2,l),\ldots$. We verified ~\eqref{proof sec, for odd l equation less than e} numerically for $m=10$, and $l=3$ and $l=5$; hence it must be true for $m\geq 10$.
\item {3.} $l=3,5$ and $m<10$: \tcb{It is easily verified} that \eqref{result sec, positivity of B(n,l,p) in first half} holds (with a strict inequality) \tcb{via straightforward calculation.}
\end{description}
\vspace{0.25 cm}

\subsection{Proof of Theorem \ref{resuls sec, theorem on first half}}
Suppose first that $M(n,k,p)\equiv v_n=p^k$. By \eqref{pre, sec, equation sum of vi equal to pi in the first form}, all the $v_i$, $k\leq i \leq n-1$, must vanish. We need to show that
\begin{equation*}
\begin{split}
v_i& = (1-p)\cdot p^i B(n-i-1,k-i-1,p), \qquad 0\leq i\leq k-1.
\end{split}
\end{equation*}
Using the generalized inclusion-exclusion principle (see \cite[Corollary~5.2]{Aigner}) and the fact that $\P$ is an exchangeable measure, we have
\begin{equation}\label{proof sec, equation of v(i) in thm M(n,k,p)=p^k}
v_i= \sum_{j=i}^n (-1)^{j-i}\binom{n-i}{j-i}\P(X_1=X_2=\cdots =X_j=1),\qquad i=0,1,\ldots,n.    
\end{equation}
Since the $X_i$-s are $k$-wise independent,
\begin{equation}\label{proof sec, equation of w(j) in thm M(n,k,p)=p^k}
\P(X_1=X_2=\cdots =X_j=1) =p^j,\qquad j=0,1,\ldots,k.   
\end{equation}
For $j\geq k$,
\begin{equation*}
=p^k=\P(X_1=\cdots =X_k=1)\geq \P(X_1=\cdots =X_j=1)\geq \P(X_1=\cdots =X_n=1)=p^k  
\end{equation*}
so that
\begin{equation}\label{proof sec, equation of w(J) in thm M(n,k,p)=p^k for j>k}
\P(X_1=X_2=\cdots =X_j=1) =p^k,\qquad j=k+1,k+2,\ldots,n.   
\end{equation}
By \eqref{proof sec, equation of v(i) in thm M(n,k,p)=p^k}, \eqref{proof sec, equation of w(j) in thm M(n,k,p)=p^k}, and \eqref{proof sec, equation of w(J) in thm M(n,k,p)=p^k for j>k}, we have for $0\leq i<k$
\begin{equation}\label{proof sec, formula of vi in the case vn pk}
\begin{split}
v_i& =  p^i- (n-i)p^{i+1} + \binom{n-i}{2}p^{i+2}- \binom{n-i}{3}p^{i+3}+ \cdots\\
&~~~  + (-1)^{k-i-1}\binom{n-i}{k-i-1}p^{k-1} + (-1)^{k-i}\binom{n-i}{k-i}p^k + (-1)^{k-i+1}\binom{n-i}{k-i+1}p^k + \cdots\\
& ~~~+ (-1)^{n-i}\binom{n-i}{n-i}p^k\\
&= p^i- (n-i)p^{i+1} + \binom{n-i}{2}p^{i+2}- \binom{n-i}{3}p^{i+3}+ \cdots\\
&~~~  + (-1)^{k-1-i}\binom{n-i}{k-i-1}p^{k-1} + (-1)^{k-i}\binom{n-i-1}{k-i-1}p^k,\\
\end{split}
\end{equation}
\tcb{where in the last step we have relied} on the identity
$$\binom{m}{j}- \binom{m}{j+1} + \binom{m}{j+2}- \cdots + (-1)^{m-j}\binom{m}{m}= \binom{m-1}{j-1}.$$
By \eqref{proof sec, defi of bonferroni}, \eqref{proof sec, formula of vi in the case vn pk}, and Lemma \ref{prelim, lemma for Bonferroni polynomial}$.1$,
\begin{equation}\label{proof sec, equation of vi in terms of nonferroni}
\begin{split}
v_i&=p^i B(n-i,k-i-1,p)+ (-1)^{k-i}\binom{n-i-1}{k-i-1}p^k\\ 
&= p^i(B(n-i-1,k-i-1,p)- p B(n-i-1,k-i-2,p))\\
&~~~ - p^{i+1}(B(n-i-1,k-i-1,p)-B(n-i-1,k-i-2,p))\\
&= p^i (1-p) B(n-i-1,k-i-1,p).
\end{split}   
\end{equation}
In particular,
\begin{equation*}
0\leq v_{k-2}= p^{k-2}(1-p)B(n-k+1,1,p)= p^{k-2}(1-p)(1- (n-k+1)p),
\end{equation*}
which implies that the condition $p\leq \frac{1}{n-k+1}$ is necessary.

It follows from Proposition \ref{result, proposition for B(n,l,p)} that the solution in \eqref{result ses, equation of optimal distribution in the first interval} is a feasible solution for $p\leq \frac{1}{n-k+1}$. Using Theorem \ref{resuls sec, theorem on upper bounds}$.1$, we conclude that the solution is optimal. Hence the condition $p\leq \frac{1}{n-k+1}$ is sufficient.
\vspace{0.25 cm}

\subsection{Proof of Theorem \ref{resuls sec, theorem on last half}}
By the generalized inclusion-exclusion principle (see \cite[Corollary~5.2]{Aigner}) and the fact that $\P$ is exchangeable, 
\begin{subequations}
\begin{align}\label{proof sec, equation for constraints (a)}
&\sum_{j=i}^n\binom{n-k}{j-i}v_j= p^i(1-p)^{k-i},&\quad 0\leq i\leq k,\\ \label{proof sec, equation for constraints (b)}
&v_i\geq 0,& \qquad 0\leq i\leq n.
\end{align}
\end{subequations}
Since $k$ is even, it follows from Theorem \ref{resuls sec, theorem on upper bounds}$.2$ that, for every feasible solution,
\begin{equation}\label{proof sec, equation on Vn less than B(n,k,p)}
\begin{split}
\P(X_1= X_2=\cdots=X_n=1)&\leq B(n,k,1-p).
\end{split}
\end{equation}
\tcb{Now moving in the opposite direction}, we have to find a feasible solution of the system of constraints \eqref{proof sec, equation for constraints (a)}-\eqref{proof sec, equation for constraints (b)}
such that $v_n = B(n,k,1-p)$. We choose \begin{align}\label{proof sec, equation of optimal distribution v(i) in the last interval}
 v_i= \begin{cases}
     0,\quad &0\leq i\leq n-k-1,\\
     (1-p)^{n-i}\cdot B(i ,i-(n-k),1-p),\quad &n-k\leq i\leq n.\\
 \end{cases}    
\end{align}
By Proposition \ref{result, proposition for B(n,l,p)}, the $v_i$-s in \eqref{proof sec, equation of optimal distribution v(i) in the last interval} are non-negative for $p\geq1-\frac{1}{n-k+1}$.
We rewrite \eqref{proof sec, equation for constraints (a)} in the form
$$\sum_{j=0}^k\binom{n-k}{n-k-i+j}v_{n-k+j}= p^i(1-p)^{k-i}, \qquad 0\leq i\leq k.$$
Equivalently,
\begin{equation}\label{proof sec, equation of the formal v(n-k+j)}
\sum_{j=0}^i\binom{n-k}{i-j}v_{n-k+j}= p^i (1-p)^{k-i},\qquad 0\leq i\leq k.    
\end{equation}
By Lemma \ref{prelim, lemma for Bonferroni polynomial}$.3$ and \eqref{proof sec, equation of optimal distribution v(i) in the last interval}, we have that for $0\leq i\leq k$
\begin{equation*}
\begin{split}
\sum_{j=0}^i \binom{n-k}{i-j}v_{n-k+j}&=\sum_{j=0}^i \binom{n-k}{i-j} (1-p)^{k-j}B(n-k+j,j,1-p)\\
&= (1-p)^{k-i} \sum_{j=0}^i \binom{n-k}{i-j} (1-p)^{i-j}B(n-k+j,j,1-p)\\
&=(1-p)^{k-i}p^i.
\end{split}   
\end{equation*}
Hence, the $v_i$-s in \eqref{proof sec, equation of optimal distribution v(i) in the last interval} form the optimal distribution. This completes the proof.
\vspace{0.25 cm}

\subsection{Proof of Proposition \ref{prop for M(n,2,p) except finitely many p}}
Let $I=\{j-1,j,n\}$ for some $1\leq j\leq n-1$. The inverse $B(I)^{-1}=[b_{s,t}]_{s\in I}^{t=0,1,2}$ is easily calculated as
{ \everymath={\displaystyle}
\begin{equation}\label{proof sec, equation of inverse of B(I) for M(n,2,p)}
B(I)^{-1}=\begin{bmatrix}
 \frac{n j}{n-j+1} & -\frac{n+j-1}{n-j+1} & \frac{2}{n-j+1}\\
 &&\\
 -\frac{n(j-1)}{n-j} & \frac{n+j-2}{n-j} & -\frac{2}{n-j}\\
 &&\\
 \frac{j(j-1)}{(n-j+1)(n-j)} & -\frac{2(j-1)}{(n-j+1)(n-j)} & -\frac{2}{(n-j+1)(n-j)}
\end{bmatrix}.    
\end{equation}
}
It follows from the discussion in Subsection \ref{subsection on LP} that
\begin{equation}\label{proof sec, equation of v(i) for M(n,2,k)}
v_i= \frac{1}{\binom{n}{i}}\sum_{l=0}^2 b_{i,l}\binom{n}{l}p^l,\qquad i=j-1,j,n.    
\end{equation}
By \eqref{proof sec, equation of inverse of B(I) for M(n,2,p)} and \eqref{proof sec, equation of v(i) for M(n,2,k)},
\begin{equation}\label{proof sec, equation of v(j-1) for M(n,2,p)}
\begin{split}
v_{j-1}&= \frac{1}{\binom{n}{j-1}}\cdot \frac{n}{n-j+1}\cdot (1-p)\cdot (j+p-np)\\ 
&= \frac{1}{\binom{n-1}{j-1}}\cdot (1-p)\cdot (j+p-np),
\end{split}    
\end{equation}
\begin{equation}\label{proof sec, equation of v(j) for M(n,2,p)}
\begin{split}
v_{j}&= \frac{1}{\binom{n}{j}}\cdot \frac{n}{n-j}\cdot (1-p)\cdot (np-p-j+1)\\ 
&= \frac{1}{\binom{n-1}{j}} \cdot (1-p)\cdot (np-p-j+1),
\end{split}    
\end{equation}
and 
\begin{equation}\label{proof sec, equation of v(n) for M(n,2,p)}
\begin{split}
v_{n}&= \frac{1}{(n-j)(n-j+1)}\cdot (n(n-1)p^2 -2n(j-1)p + j(j-1))\\ 
&= \frac{1}{\binom{n-j+1}{2}}\left(\binom{n}{2}p^2 -n(j-1)p + \binom{j}{2}\right).
\end{split}    
\end{equation}
By \eqref{proof sec, equation of v(j-1) for M(n,2,p)}-\eqref{proof sec, equation of v(n) for M(n,2,p)}, considered as functions of $p$ on the interval $\left[\frac{j-1}{n-1},\frac{j}{n-1}\right]$:
\begin{itemize}
\item $v_{j-1}$ is decreasing and vanishes at the right endpoint;
\item $v_{j}$ vanishes at the left endpoint and is increasing;
\item $v_n$ attains its minimum, which is positive, at the left endpoint.
\end{itemize}
It then follows from Theorem \ref{pre sec, theorem on B(I) tight bound} that
$$M(n,2,p)=\frac{1}{\binom{n-j+1}{2}}\left(\binom{n}{2}p^2 -n(j-1)p + \binom{j}{2}\right), \qquad p\in \left[\frac{j-1}{n-1},\frac{j}{n-1}\right],$$
and the distribution is given by \eqref{proof sec, equation of v(j-1) for M(n,2,p)}-\eqref{proof sec, equation of v(n) for M(n,2,p)}.
\vspace{0.25 cm}

\subsection{Proof of Theorem \ref{resuls sec, theorem on second interval 1}}
\noindent 1. By Theorem \ref{resuls sec, theorem on first half}, the optimal distribution for $0\leq p \leq p_1$ is given by
\begin{equation}\label{proof sec, equation of vi in the first interval for proof of the second interval}
\begin{split}
v_i= \begin{cases}(1-p)\cdot p^i\cdot B(n-i-1,k-i-1,p),& \quad 0\leq i\leq k-1,\\
0,& \quad k\leq i\leq n-1, \\
p^k,&\quad i=n.    
\end{cases}     
\end{split}
\end{equation}
In particular, at the point $p=p_1$,
$$v_{k-2}= (1-p_1)p_1^{k-2}B(n-k+1,1,p_1)=(1-p_1)p_1^{k-2}\left(1-(n-k+1)\cdot \frac{1}{n-k+1}\right)=0,$$
while from Proposition \ref{result, proposition for B(n,l,p)} it follows that
$$v_i>0,\qquad i\in \{0,1,\ldots, k-3,k-1,n\}.$$
The optimal distribution changes continuously with $p$, and therefore
\begin{equation}\label{proof sec, equation for vi strictly positive}
v_i>0,\qquad i\in \{0,1,\ldots, k-3,k-1,n\}, \,\, p\in [p_1,p_2],  
\end{equation}
for some $p_2>p_1$.
Let $I=\{0,1,2,\ldots, k-3,k-1, k, n\}$. It follows from the discussion in Subsection \ref{subsection on LP} and \eqref{pre sec, equation of elements of matrix B} that
\begin{equation}\label{proof sec, equation of vi second interval}
 v_i=\frac{1}{\binom{n}{i}}\cdot \sum_{l=0}^k b_{i,l} \binom{n}{l}p^l, \qquad    i\in I,
\end{equation}
where 
\begin{equation}\label{proof sec, equation of b(st) second interval}
b_{s,t} =  \frac{(-1)^t \sum_{\alpha=0}^t (-1)^\alpha \binom{t}{\alpha}\prod_{u\in I-\{s\}}(u- \alpha)}{\prod_{u\in I-\{s\}}(u- s)},\qquad s\in I,\, 0\leq t\leq k.   
\end{equation}
By \eqref{proof sec, equation of b(st) second interval},
\begin{equation}\label{proof sec, equation of b(0,j) second interval for i less then k-3}
\begin{split}
b_{0,j}=\begin{dcases}
 (-1)^j,& \qquad 0\leq j\leq k-3, \\
 (-1)^{k-2} + (-1)^{k-1}C_0,& \qquad j=k-2,\\
(-1)^{k-1} - (-1)^{k-1}\binom{k-1}{k-2}C_0,&\qquad j=k-1,\\
(-1)^{k} + (-1)^{k-1}\binom{k}{k-2}C_0,&\qquad j=k,
\end{dcases}
\end{split}  
\end{equation}
where
\begin{equation}\label{proof sec, equation of C0}
\begin{split}
C_0&= \frac{(k-2)!\cdot 2\cdot (n-k+2)}{k!\cdot n} = \frac{\binom{n-1}{k}}{\binom{n-k+1}{2}\cdot \binom{n}{k-2}}.
\end{split}    
\end{equation}
In view of \eqref{proof sec, equation of vi second interval}, \eqref{proof sec, equation of b(0,j) second interval for i less then k-3}, and \eqref{proof sec, equation of C0}, we obtain
\begin{equation}\label{proof sec, equation of v(0) in second interval}
\begin{split}
v_0&= \sum_{j=0}^{k}b_{0,j} \binom{n}{j} p^j\\
&= \sum_{j=0}^{k}(-1)^j\binom{n}{j}p^j + \\
&~~~ (-1)^{k-1}\left(\binom{n}{k-2}p^{k-2} - \binom{k-1}{k-2}\binom{n}{k-1} p^{k-1} + \binom{k}{k-2}\binom{n}{k}p^{k-2}\right) C_0\\
&=B(n,k,p)+ (-1)^{k-1}\binom{n}{k-2}\cdot p^{k-2}B(n-k+2,2,p) C_0\\
&=B(n,k,p) +(-1)^{k-1} \frac{\binom{n-1}{k}}{\binom{n-k+1}{2}}\cdot p^{k-2} B(n-k+2,2,p).\\
\end{split}
\end{equation}
By \eqref{proof sec, equation of b(st) second interval}, for $1\leq i\leq k-3$,
\begin{equation}\label{proof sec, equation of b(i,j) second interval for i less then k-3}
\begin{split}
b_{i,j}=\begin{dcases}
(-1)^j(-1)^i\binom{j}{i},& \qquad i\leq j\leq k-3, \\
(-1)^{k-2}(-1)^i\binom{k-2}{i} + (-1)^{k-i-1}C_{i}, & \qquad j=k-2,\\
(-1)^{k-1}(-1)^i\binom{k-1}{i} - (-1)^{k-i-1}\binom{k-1}{k-2}C_{i}, & \qquad j=k-1,\\
(-1)^{k}(-1)^i\binom{k}{i} + (-1)^{k-i-1}\binom{k}{k-2}C_{i},& \qquad j=k,
\end{dcases}
\end{split}   
\end{equation}
where 
\begin{equation}\label{proof sec, equation of C1}
\begin{split}
C_{i}&= \frac{(k-2)!\cdot 2\cdot (n-k+2)}{i!\cdot (k-i)!\cdot (n-i)}= \frac{\binom{n}{i}\cdot \binom{n-i-1}{k-i}}{\binom{n-k+1}{2}\cdot \binom{n}{k-2}}.
\end{split}  \qquad 1\leq i\leq k-3.  
\end{equation}
(Here and below, we hint implicitly that all unspecified coefficients vanish. For example, in \eqref{proof sec, equation of b(i,j) second interval for i less then k-3} we have $b_{i,j}=0$ for $j<i$.)

By \eqref{proof sec, equation of vi second interval}, \eqref{proof sec, equation of b(i,j) second interval for i less then k-3}, and \eqref{proof sec, equation of C1}, for $1\leq i\leq k-3$,
\begin{equation*}
\begin{split}
\binom{n}{i}v_i&= \sum_{j=0}^{k}b_{i,j} \binom{n}{j} p^j\\
&= \sum_{j=i}^{k}(-1)^j(-1)^i \binom{j}{i}\binom{n}{j}p^j\\
&~~~ + (-1)^{k-i-1} p^{k-2} \left(\binom{n}{k-2} - \binom{k-1}{k-2}\binom{n}{k-1} p + \binom{k}{k-2}\binom{n}{k}p^2\right) C_{i}\\
&= \binom{n}{i} p^iB(n-i,k-i,p) +(-1)^{k-i-1} p^{k-2} \binom{n}{k-2} B(n-k+2,2,p) C_{i},\\
\end{split}
\end{equation*}
and therefore
\begin{equation}\label{proof sec, equation of v(i) in second interval i <=k-3}
v_i=p^iB(n-i,k-i,p) +(-1)^{k-i-1} p^{k-2} \frac{\binom{n-i-1}{k-i}}{\binom{n-k+1}{2}} B(n-k+2,2,p).    
\end{equation}
By \eqref{proof sec, equation of b(st) second interval},
\begin{equation}\label{proof sec, equation of b(k-1,i) second interval}
\begin{split}
b_{k-1,j}= \begin{dcases}
C_{k-1}, & \qquad j=k-2,\\
- \binom{k-1}{k-2 }C_{k-1} +1, & \qquad j=k-1,\\
\binom{k}{k-2 }C_{k-1} -\binom{k}{k-1},& \qquad j=k, \\    
\end{dcases}
\end{split}
\end{equation}
where 
\begin{equation}\label{proof sec, equation of C2}
\begin{split}
C_{k-1}&= \frac{(k-2)! \cdot 2 \cdot (n-k+2)}{(k-1)!\cdot (n-k+1)}= \frac{\binom{n}{k-1}}{\binom{n}{k-2}}\cdot \frac{\binom{n-k}{1}}{\binom{n-k+1}{2}}.
\end{split}    
\end{equation}
By \eqref{proof sec, equation of vi second interval}, \eqref{proof sec, equation of b(k-1,i) second interval}, and \eqref{proof sec, equation of C2}, we have
\begin{equation*}
\begin{split}
 \binom{n}{k-1}v_{k-1} &= b_{k-1,k-2} \binom{n}{k-2}p^{k-2} + b_{k-1,k-1} \binom{n}{k-1}p^{k-1}  + b_{k-1,k} \binom{n}{k}p^{k} \\
 &= C_{k-1}\cdot p^{k-2} \binom{n}{k-2} - \binom{k-1}{k-2}\binom{n}{k-1} C_{k-1}\cdot p^{k-1} + \binom{n}{k-1}p^{k-1}\\
 &~~~ + \binom{k}{k-2}\binom{n}{k} C_{k-1} \cdot p^{k-1}- \binom{k}{k-1}\binom{n}{k}p^k\\
 &= \binom{n}{k-1} p^{k-1}B(n-k+1,1,p) + \binom{n}{k-2}p^{k-2}B(n-k+2,2,p )C_{k-1}\\
 &= \binom{n}{k-1} p^{k-1}B(n-k+1,1,p) + \frac{\binom{n}{k-1}\cdot \binom{n-k}{1}}{\binom{n-k+1}{2}}p^{k-2}B(n-k+2,2,p ),\\
\end{split}   
\end{equation*}
and therefore
\begin{equation}\label{proof sec, equation of v(k-1) in second interval}
v_{k-1} =p^{k-1}B(n-k+1,1,p) + \frac{\binom{n-k}{1}}{\binom{n-k+1}{2}}p^{k-2}B(n-k+2,2,p ).   \end{equation}
By \eqref{proof sec, equation of b(st) second interval},
\begin{equation}\label{proof sec, equation of b(kj) second interval}
\begin{split}
b_{k,j}= \begin{dcases}
-C_k,&\qquad j=k-2,\\
\binom{k-1}{k-2} C_k,&\qquad j=k-1,\\
-\binom{k}{k-2} C_k +1,&\qquad j=k,\\
\end{dcases}
\end{split}
\end{equation}
where
\begin{equation}\label{proof sec, equation of C3}
\begin{split}
C_k&= \frac{(k-2)! \cdot 2 \cdot (n-k+2)}{k!\cdot (n-k)}=\frac{\binom{n}{k}}{\binom{n}{k-2}}\cdot \frac{1}{\binom{n-k+1}{2}}.
\end{split}   
\end{equation}
In view of \eqref{proof sec, equation of vi second interval}, \eqref{proof sec, equation of b(kj) second interval}, and \eqref{proof sec, equation of C3}, we obtain
\begin{equation*}
\begin{split}
\binom{n}{k}v_k&= b_{k,k-2} p^{k-2}\binom{n}{k-2} + b_{k,k-1} p^{k-1}\binom{n}{k-1}+ b_{k,k} p^{k}\binom{n}{k}\\
&= -C_k p^{k-2}\binom{n}{k-2} + \binom{k-1}{k-2} C_k p^{k-1}\binom{n}{k-1}- \binom{k}{k-2} C_k p^{k}\binom{n}{k} + p^{k}\binom{n}{k}\\
&=\binom{n}{k}p^k -p^{k-2}\binom{n}{k-2}B(n-k+2,2,p)C_k\\
&=\binom{n}{k} p^k -p^{k-2}\cdot \frac{\binom{n}{k}}{\binom{n-k+1}{2}}B(n-k+2,2,p),
\end{split}
\end{equation*}
and therefore
\begin{equation}\label{proof sec, equation of v(k) in second interval}
v_k =p^k -p^{k-2}\cdot \frac{1}{\binom{n-k+1}{2}}B(n-k+2,2,p).  
\end{equation}
Using Lemma \ref{prelim, lemma for Bonferroni polynomial}$.1$, we further simplify \eqref{proof sec, equation of v(k) in second interval} as
\begin{equation}\label{proof sec, equation of v(k) in second interval simplified}
\begin{split}
v_k&= p^k -p^{k-2}\frac{1-p}{\binom{n-k+1}{2}}B(n-k+1,1.p) - p^k\\
&= -p^{k-2}\frac{1-p}{\binom{n-k+1}{2}}B(n-k+1,1,p)\\
 &=\frac{1-p}{\binom{n-k+1}{2}}\cdot \left(-1+(n-k+1)p\right).\\
\end{split}
\end{equation}
By \eqref{proof sec, equation of v(k) in second interval simplified}, we observe that $v_k=0$ at the point $p_1$ while $v_k>0$ for $p>p_1$.
Using \eqref{proof sec, equation of vi second interval}, we obtain
\begin{equation}\label{proof sec, equation of v(n) in second interval}
v_n= \sum_{l=0}^k b_{n,l} \binom{n}{l}p^l.    
\end{equation}
By \eqref{proof sec, equation of b(st) second interval}, 
\begin{equation}\label{proof sec, equation of b(n0) second interval}
\begin{split}
b_{n,j}=\begin{dcases}
C_n,&\qquad j=k-2,\\
-\binom{k-1}{k-2}C_n, &\qquad j=k-1,\\
\binom{k}{k-2}C_n,&\qquad j=k,\\    
\end{dcases}
\end{split}
\end{equation}
where
\begin{equation}\label{proof sec, equation of C4}
\begin{split}
C_n&= \frac{1}{\binom{n}{k-2}}\cdot\frac{1}{\binom{n-k+1}{2}}.
\end{split}   
\end{equation}
In view of \eqref{proof sec, equation of v(n) in second interval}, \eqref{proof sec, equation of b(n0) second interval}, \eqref{proof sec, equation of C4}, we conclude that
\begin{equation}\label{proof sec, equation2 of v(n) in second interval}
\begin{split}
v_n&= b_{n,k-2 }\binom{n}{k-2}p^{k-2} + b_{n,k-1 }\binom{n}{k-1} p^{k-1}+ b_{n,k}\binom{n}{k}p^{k}\\  
&=\left(\binom{n}{k-2} -\binom{k-1}{k-2}\binom{n}{k-2} + \binom{k}{k-2}\binom{n}{k}\right)C_n\\
&= p^{k-2}\binom{n}{k-2}B(n-k+2,2,p)C_n\\
&= \frac{1}{\binom{n-k+1}{2}}\cdot p^{k-2}\cdot B(n-k+2,2,p).
\end{split}
\end{equation}
Using Theorem \ref{resuls sec, theorem on upper bounds}$.1$, \eqref{proof sec, equation for vi strictly positive}, \eqref{proof sec, equation of v(k) in second interval simplified}, and \eqref{proof sec, equation2 of v(n) in second interval}, we finally arrive at
$$M(n,k,p)= \frac{1}{\binom{n-k+1}{2}}\cdot p^{k-2}\cdot B(n-k+2,2,p),\qquad p\in \left[p_1,p_2\right].$$
\vspace{0.25 cm}

\noindent 2. For $k=2,3$, the claim follows readily from Propositions \ref{prop for M(n,2,p) except finitely many p}-\ref{prop for M(n,3,p) except finitely many p}. Thus, let $k\geq 4$.
Using Lemma \ref{prelim, lemma for Bonferroni polynomial}$.1$ and \eqref{proof sec, equation of v(k-1) in second interval}, we obtain 
\begin{equation}\label{proof sec, equation of vk-1 for k in second interval}
\begin{split}
v_{k-1}&= p^{k-1}B(n-k+1,1,p) + \frac{\binom{n-k}{1}}{\binom{n-k+1}{2}}\cdot p^{k-2} B(n-k+2,2,p)\\
&= p^{k-1}\left((1-p)B(n-k,0,p)- \binom{n-k}{1}p\right) \\
&~~~+ \frac{2}{n-k+1}\cdot p^{k-2} \left((1-p)B(n-k+1,1,p) +\binom{n-k+1}{2}p^2\right)\\
&= p^{k-1}(1-p) +\frac{2}{n-k+1}\cdot p^{k-2}(1-p)B(n-k+1,1,p)\\
&= \frac{1-p}{n-k+1}\cdot p^{k-2}\cdot \left(2-p(n-k+1)\right)=\frac{1-p}{n-k+1}\cdot  p^{k-2}\cdot f_{k-1}(p),\\
\end{split}    
\end{equation}
where $f_{k-1}(p)= 2- (n-k+1)p$.
Using Lemma \ref{prelim, lemma for Bonferroni polynomial}$.1$ and \eqref{proof sec, equation of v(i) in second interval i <=k-3}, we obtain
\begin{equation}\label{proof sec, equation of vk-3 for k in second interval}
\begin{split}
v_{k-3}&= p^{k-3}B(n-k+3,3,p) + \frac{\binom{n-k+2}{3}}{\binom{n-k+1}{2}}\cdot p^{k-2} B(n-k+2,2,p)\\
&= p^{k-3}\left((1-p)B(n-k+2,2,p)- \binom{n-k+2}{3}p^3\right)\\
&~~~ + \frac{\binom{n-k+2}{3}}{\binom{n-k+1}{2}}\cdot p^{k-2} \left((1-p)B(n-k+1,1,p)+ \binom{n-k+1}{2}p^2\right)\\
&=p^{k-3}(1-p)\left(B(n-k+2,2,p) + \frac{n-k+2}{3}\cdot p\cdot B(n-k+1,1,p)\right)\\
&=(1-p) \cdot p^{k-3}\cdot \left(1-\frac{2}{3}(n-k+2)p + \frac{1}{6} (n-k+1)(n-k+2)p^2\right),\\
&=(1-p) \cdot p^{k-3}\cdot f_{k-3}(p),\\
\end{split}    
\end{equation}
where 
\begin{equation}\label{proof sec, equation of f(2,k-3)}
f_{k-3}(p)= 1-\frac{2}{3}\binom{n-k+2}{1}p + \frac{1}{3} \binom{n-k+2}{2}p^2.    
\end{equation}
Using Lemma \ref{prelim, lemma for Bonferroni polynomial}$.1$ and \eqref{proof sec, equation of v(i) in second interval i <=k-3}, we obtain
\begin{equation}\label{proof sec, equation of vk-4 for k in second interval}
\begin{split}
v_{k-4}&= p^{k-4}B(n-k+4,4,p) - \frac{\binom{n-k+3}{4}}{\binom{n-k+1}{2}}\cdot p^{k-2} B(n-k+2,2,p)\\
&= p^{k-4}\left((1-p)B(n-k+3,3,p) +\binom{n-k+3}{4}p^4 \right)\\
&~~~ - \frac{\binom{n-k+3}{4}}{\binom{n-k+1}{2}}\cdot p^{k-2} \left((1-p)B(n-k+1,1,p) +\binom{n-k+1}{2}p^2\right)\\
&= p^{k-4}(1-p)\left(B(n-k+3,3,p)- \frac{\binom{n-k+3}{4}}{\binom{n-k+1}{2}}\cdot p^2\cdot B(n-k+1,1,p) \right)\\
&= (1-p) \cdot p^{k-4}\cdot f_{k-4}(p),\\
\end{split}    
\end{equation}
where 
\begin{equation}\label{proof sec, equation of f(2,k-4)}
f_{k-4}(p)= 1-\binom{n-k+3}{1}p + \frac{5}{6} \binom{n-k+3}{2}p^2- \frac{1}{2}\binom{n-k+3}{3}p^3.    
\end{equation}
By \eqref{proof sec, equation of vk-1 for k in second interval}-\eqref{proof sec, equation of f(2,k-4)}, considered as functions of $p$ on the interval $\left[\frac{1}{n-k+1},\frac{2}{n-k+1}\right]$:
\begin{itemize}
\item $v_{k-1}$ is a decreasing function and vanishes at the right endpoint;
\item $v_{k-3}$ attains its minimum, which is positive, at the right endpoint;
\item $v_{k-4}$ is strictly positive at the left endpoint and strictly negative at the right endpoint. The derivative of $f_{k-4}$ is strictly negative in the interval. Hence, $v_{k-4}$ vanishes exactly once in the interval, at the point $p_2$;
\item $v_i$ is non-negative throughout the sub-interval $[p_1,p_2]$ for $i=0,1,2,\ldots, k-5$ by Theorem~\ref{pre sec, theorem on basis change}.
\end{itemize}
Using Theorem \ref{pre sec, theorem on B(I) tight bound}, we conclude the proof.
\vspace{0.25 cm}

\subsection{Proof of Theorem \ref{resuls sec, theorem on second last interval 1}}
By Theorem \ref{resuls sec, theorem on last half}, the optimal distribution for $[\overline{p}_1,1]$ is given by
\begin{align}\label{proof sec, equation of optimal distribution in the last interval}
 v_i= \begin{cases}
     0,&\quad 0\leq i\leq n-k-1,\\
     (1-p)^{n-i}\cdot B(i ,i-(n-k),1-p),&\quad n-k\leq i\leq n.\\
 \end{cases}    
\end{align}
In particular, at the point $p=\overline{p}_1$
$$v_{n-k+1}= (1-\overline{p}_1)^{k-1} B(n-k+1,1,1-\overline{p}_1)= (1-\overline{p}_1)^{k-1}(1-(n-k+1)(1-\overline{p}_1))=0.$$
The optimal distribution changes continuously with $p$, and therefore,
$$v_{i}>0, \qquad i\in \{n-k,n-k+2,\ldots, n-1,n\}, \,\, p\in [\overline{p}_2,\overline{p}_1],$$
for some $\overline{p}_2<\overline{p}_1$. Denote
{ \everymath={\displaystyle}
\begin{equation}\label{proof sec, equation for notation of vi' and wi'}
\begin{array}{ccll}
v_i'&=& \P(X_1=X_2=\cdots=X_i=0,X_{i+1}=\cdots=X_n=1),&\qquad i=0,1,2,\ldots,n, \\
\vspace{0.25 cm}
w_i'&=& \binom{n}{i}v_i',&\qquad i=0,1,2,\ldots,n, \\
\vspace{0.25 cm}
S_0'&=& 1,\\
\vspace{0.25 cm}
S_i'&=& \binom{n}{i}(1-p)^i,&\qquad i=1,2,\ldots,k,\\
\vspace{0.25 cm}
I'&=& \{0,1,2,\ldots, k-2,k,k+1\}.\\
\end{array} 
\end{equation}
(Note that $v_{i}'= v_{n-i}$, we are using this notation to make the calculation simple and easily relate the calculation below to the proof of Theorem \ref{resuls sec, theorem on second interval 1})
It follows from the discussion in Subsection \ref{subsection on LP} and \eqref{pre sec, equation of elements of matrix B} that
\begin{equation}\label{proof sec, equation of vi second last interval}
 v_i'=\frac{1}{\binom{n}{i}}\cdot \sum_{l=0}^k b_{i,l} \binom{n}{l}(1-p)^l, \qquad    i\in I',
\end{equation}
where 
\begin{equation}\label{proof sec, equation of b(st) second last interval}
b_{s,t} =  \frac{(-1)^t \sum_{\alpha=0}^t (-1)^\alpha \binom{t}{\alpha}\prod_{u\in I'-\{s\}}(u- \alpha)}{\prod_{u\in I'-\{s\}}(u- s)},\qquad s\in I',\, 0\leq t\leq k.   
\end{equation}
By \eqref{proof sec, equation of b(st) second last interval},
\begin{equation}\label{proof sec, equation of b(0,j) second interval last for i less then k-2}
\begin{split}
b_{0,j}=\begin{dcases}
 (-1)^j,& \qquad 0\leq j\leq k-2, \\
 (-1)^{k-1} - (-1)^{k-1}D_0,& \qquad j=k-1,\\
(-1)^{k} - (-1)^{k}\binom{k}{k-1}D_0,&\qquad j=k,
\end{dcases}
\end{split}  
\end{equation}
where
\begin{equation}\label{proof sec, equation of C0 second last}
\begin{split}
D_0&= \frac{1}{\binom{k+1}{2}}.
\end{split}    
\end{equation}
In view of \eqref{proof sec, equation of vi second last interval}, \eqref{proof sec, equation of b(0,j) second interval last for i less then k-2}, and \eqref{proof sec, equation of C0 second last}, we obtain
\begin{equation}\label{proof sec, equation of v(0)' in second last interval}
\begin{split}
v_0'&= \sum_{j=0}^{k}b_{0,j} \binom{n}{j} (1-p)^j\\
&= \sum_{j=0}^{k}(-1)^j\binom{n}{j}(1-p)^j \\
&~~~- (-1)^{k-1}\left(\binom{n}{k-1}(1-p)^{k-1} - \binom{k}{k-1}\binom{n}{k} (1-p)^{k}\right) D_0\\
&=B(n,k,1-p)- (-1)^{k-1}\binom{n}{k-1} \cdot (1-p)^{k-1} B(n-k+1,1,1-p) D_0\\
&=B(n,k,1-p)- (-1)^{k-1}\frac{\binom{n}{k-1}}{\binom{k+1}{2}} \cdot (1-p)^{k-1} B(n-k+1,1,1-p)\\
&=B(n,k,1-p)+\frac{\binom{n}{k+1}}{\binom{n-k+1}{2}}\cdot (1-p)^{k-1}B(n-k+1,1,1-p).\\
\end{split}
\end{equation}
By \eqref{proof sec, equation of v(0)' in second last interval},  and using the fact that $v_0'= v_n$, we have
\begin{equation}\label{proof sec, equation of vn in the second last interval}
v_n= B(n,k,1-p)+\frac{\binom{n}{k+1}}{\binom{n-k+1}{2}}\cdot (1-p)^{k-1}B(n-k+1,1,1-p).    
\end{equation}
By \eqref{proof sec, equation of b(st) second last interval}, for $1\leq i\leq k-2$,
\begin{equation}\label{proof sec, equation of b(i,j) second last interval for i less then k-2}
\begin{split}
b_{i,j}=\begin{dcases}
(-1)^j(-1)^i\binom{j}{i},& \qquad i\leq j\leq k-2, \\
(-1)^{k-1}(-1)^i\binom{k-1}{i} - (-1)^{k-i-1}D_{i}, & \qquad j=k-1,\\
(-1)^{k}(-1)^i\binom{k}{i} + (-1)^{k-i-1}\binom{k}{k-1}D_{i},& \qquad j=k,
\end{dcases}
\end{split}   
\end{equation}
where 
\begin{equation}\label{proof sec, equation of D1 in second last interval}
\begin{split}
D_{i}&= \frac{\binom{n-i}{k+1-i}\cdot \binom{n}{i}}{\binom{n}{k-1}\cdot \binom{n-k+1}{2}}.
\end{split}  \qquad 1\leq i\leq k-2.  
\end{equation}
It follows from \eqref{proof sec, equation of vi second last interval}, \eqref{proof sec, equation of b(i,j) second last interval for i less then k-2}, and \eqref{proof sec, equation of D1 in second last interval} that, for $1\leq i\leq k-2$,
\begin{equation*}
\begin{split}
\binom{n}{i}v_i'&= \sum_{j=0}^{k}b_{i,j} \binom{n}{j} (1-p)^j\\
&= \sum_{j=i}^{k}(-1)^j(-1)^i \binom{j}{i}\binom{n}{j}(1-p)^j\\
&~~~~ - (-1)^{k-i-1} (1-p)^{k-1} \left(\binom{n}{k-1} - \binom{k}{k-1}\binom{n}{k} (1-p)\right) D_{i}\\
&= \binom{n}{i} (1-p)^iB(n-i,k-i,1-p)\\
&~~~~~~ -(-1)^{k-i-1} (1-p)^{k-1} \binom{n}{k-1} B(n-k+1,1,1-p) D_{i},\\
\end{split}
\end{equation*}
and therefore
\begin{equation}\label{proof sec, equation of v(i)' in second last interval i <=k-2}
v_{n-i}= v_i'=(1-p)^iB(n-i,k-i,1-p) -(-1)^{k-i-1}\frac{\binom{n-i}{k+1-i}}{\binom{n-k+1}{2}}\cdot (1-p)^{k-1} B(n-k+1,1,1-p).    
\end{equation}
By \eqref{proof sec, equation of b(st) second last interval},
\begin{equation}\label{proof sec, equation of b(k,i) second last interval}
\begin{split}
b_{k,j}= \begin{dcases}
D_{k}, & \qquad j=k-1,\\
- \binom{k}{k-1 }D_k +1, & \qquad j=k,\\    
\end{dcases}
\end{split}
\end{equation}
where 
\begin{equation}\label{proof sec, equation of Dk second last interval}
\begin{split}
D_k&= \frac{\binom{k-1}{1}}{\binom{k}{2}}.
\end{split}    
\end{equation}
According to \eqref{proof sec, equation of vi second last interval}, \eqref{proof sec, equation of b(k,i) second last interval}, and \eqref{proof sec, equation of Dk second last interval},
\begin{equation*}
\begin{split}
\binom{n}{k}v_k'&= \binom{n}{k}(1-p)^k + (1-p)^{k-1}\left(\binom{n}{k-1} - \binom{k}{k-1}\binom{n}{k} (1-p)\right) D_k\\
&= \binom{n}{k}(1-p)^k +\binom{n}{k-1} (1-p)^{k-1}B(n-k+1,1,1-p) D_k\\
&=\binom{n}{k}(1-p)^k +\frac{\binom{n}{k-1}\cdot \binom{k-1}{1}}{\binom{k}{2}}\cdot (1-p)^{k-1}B(n-k+1,1,1-p),\\
\end{split}
\end{equation*}
and therefore
\begin{equation}\label{proof sec, equation of v(k)' in second last interval}
v_{n-k}= v_k' =(1-p)^k + \frac{\binom{n-k}{1}}{\binom{n-k+1}{2}}\cdot (1-p)^{k-1}B(n-k+1,1,1-p).   
\end{equation}
By \eqref{proof sec, equation of b(st) second last interval},
\begin{equation}\label{proof sec, equation of b(k+1,i) second last interval}
\begin{split}
b_{k+1,j}= \begin{dcases}
-D_{k+1}, & \qquad j=k-1,\\
\binom{k}{k-1 }D_{k+1}, & \qquad j=k,\\    
\end{dcases}
\end{split}
\end{equation}
where 
\begin{equation}\label{proof sec, equation of D(k+1) second last interval}
\begin{split}
D_{k+1}&= \frac{1}{\binom{k+1}{2}}.
\end{split}    
\end{equation}
Finally \eqref{proof sec, equation of vi second last interval}, \eqref{proof sec, equation of b(k+1,i) second last interval}, and \eqref{proof sec, equation of D(k+1) second last interval} imply
\begin{equation*}
\begin{split}
\binom{n}{k+1}v_{k+1}'&= - (1-p)^{k-1}\left(\binom{n}{k-1} - \binom{k}{k-1}\binom{n}{k} (1-p)\right) D_{k+1}\\
&= -\binom{n}{k-1}(1-p)^k B(n-k+1,1,1-p) D_{k+1}\\
&= -\frac{\binom{n}{k-1}}{\binom{k+1}{2}}(1-p)^k B(n-k+1,1,1-p),
\end{split}
\end{equation*}
and thus
\begin{equation}\label{proof sec, equation of v(k+1)' in second last interval}
v_{n-k-1}= v_{k+1}' =-\frac{1}{\binom{n-k+1}{2}}\cdot (1-p)^{k-1} B(n-k+1,1,1-p).   
\end{equation}
By Proposition \ref{result, proposition for B(n,l,p)} and \eqref{proof sec, equation of v(k+1)' in second last interval},
\begin{equation}\label{proof sec, equation2 of v(k+1)' in second last interval}
v_{n-k-1}\geq 0, \qquad p\leq\overline{p}_1.   
\end{equation}
Using Theorem \ref{pre sec, theorem on B(I) tight bound}, \eqref{proof sec, equation of vn in the second last interval}, and \eqref{proof sec, equation2 of v(k+1)' in second last interval}, we arrive at the desired result
$$M(n,k.p)= B(n,k,1-p)+ \frac{\binom{n}{k+1}}{\binom{n-k+1}{2}}\cdot (1-p)^{k-1}B(n-k+1,1,1-p),\qquad p\in [\overline{p}_2,\overline{p}_1].$$
\vspace{0.25 cm}

\noindent 2. For $k=2$, the claim follows readily from Proposition \ref{prop for M(n,2,p) except finitely many p}. Thus, let $k\geq 4$.
Using \eqref{proof sec, equation of v(k)' in second last interval}, we have 
\begin{equation}\label{proof sec, equation of v(n-k) case 2 second last interval}
\begin{split}
 v_{n-k}&= (1-p)^k + \frac{\binom{n-k}{1}}{\binom{n-k+1}{2}}\cdot (1-p)^{k-1} B(n-k+1,1, 1-p)\\
 &= (1-p)^{k-1}\left(\frac{2}{n-k+1}-(1-p)\right)\\
 &= (1-p)^{k-1}g_{n-k}(p),
\end{split}    
\end{equation}
where
$$g_{n-k}(p)= \frac{2}{n-k+1}-(1-p).$$
By \eqref{proof sec, equation of v(i)' in second last interval i <=k-2},
\begin{equation}\label{proof sec, equation of v(n-k+2) case 2 second interval}
\begin{split}
v_{n-k+2}&= (1-p)^{k-2}B(n-k+2,2,1-p) + \frac{\binom{n-k+2}{3}}{\binom{n-k+1}{2}}\cdot (1-p)^{k-1} B(n-k+1,1,1-p)\\
&= (1-p)^{k-2}\left(B(n-k+2,2,1-p)+ \frac{n-k+2}{3}\cdot (1-p) B(n-k+1,1,1-p)\right)\\
&=(1-p)^{k-2}\left(1- \frac{2}{3}\binom{n-k+2}{1}(1-p) + \frac{1}{3}\binom{n-k+2}{2}(1-p)^2\right)\\
&= (1-p)^{k-2}g_{n-k+2}(p),
\end{split}  
\end{equation}
where 
\begin{equation}\label{proof sec equation of g(n-k+2)}
g_{n-k+2}(p)=1- \frac{2}{3}\binom{n-k+2}{1}(1-p) + \frac{1}{3}\binom{n-k+2}{2}(1-p)^2.    
\end{equation}
Employing \eqref{proof sec, equation of v(i)' in second last interval i <=k-2}, we obtain
\begin{equation}\label{proof sec, equation of v(n-k+3) case 2 second interval}
\begin{split}
v_{n-k+3}&= (1-p)^{k-3}B(n-k+3,3,1-p) - \frac{\binom{n-k+3}{4}}{\binom{n-k+1}{2}}\cdot (1-p)^{k-1} B(n-k+1,1,1-p)\\
&= (1-p)^{k-3}\left(B(n-k+3,3,1-p)- \frac{\binom{n-k+3}{4}}{\binom{n-k+1}{2}}\cdot(1-p)^2 B(n-k+1,1,1-p)\right)\\
&=(1-p)^{k-3}\cdot g_{n-k+3}(p),
\end{split}  
\end{equation}
where 
\begin{equation}\label{proof sec, equation of g(n-k+3)}
 g_{n-k+3}(p) =1-\binom{n-k+3}{1}(1-p) + \frac{5}{6} \binom{n-k+3}{2}(1-p)^2- \frac{1}{2}\binom{n-k+3}{3}(1-p)^3. 
\end{equation}
By \eqref{proof sec, equation2 of v(k+1)' in second last interval}-\eqref{proof sec, equation of g(n-k+3)}, considered as function of $p$ on the interval $\left[1-\frac{2}{n-k+1},1-\frac{1}{n-k+1}\right]$:
\begin{itemize}
\item $v_{n-k-1}\geq 0$ throughout the interval;
\item $v_{n-k}$ vanishes at the left endpoint of the interval and is increasing throughout;
\item $v_{n-k+2}$ attains it minimum, which is positive, at the left endpoint;
\item $v_{n-k+3}$ is strictly positive at the right endpoint and strictly negative at the left endpoint. It vanishes only at the point $\overline{p}_2$ of this interval;
\item $v_{n-k+i}$ is non-negative throughout the sub-interval $[\overline{p}_2,\overline{p}_1]$ for $i=4,5,\ldots,k$ by Theorem \ref{pre sec, theorem on basis change}.
\end{itemize}
Using Theorem \ref{pre sec, theorem on B(I) tight bound}, we conclude the proof.

\section{Numerical Data} \label{numerical data}

\tcb{\rcb{In this final section, we conduct numerical studies \tcb{and present our key observations from these studies as eight open conjectures. }}}

\indent In the sequel, the data we will consider is for \tcb{the following values of $k$ and $n$}:
\begin{enumerate}
\item $k=4$ and $6\leq n\leq 500$.
\item $k=6$ and $8\leq n\leq 200$.
\item $k=8$ and $10\leq n\leq 100$.
\item $k=10$ and $12\leq n\leq 50$.
\end{enumerate} 
All eight conjectures hold for this observed data.

Let $2\leq k\leq n-1$ with even $k$. Recall that a basis $B$ is dual feasible if it consists of pairs of consecutive columns of $A$, say $a_1, a_1+1, \ldots, a_{k/2}, a_{k/2}+1$, and in addition column~$n$. By abuse of language, we will refer to the set of column numbers
$$I= \{a_1,a_1+1,a_2,a_2+1,\ldots, a_{k/2},a_{k/2}+1,n\}\subset \{0,1,2,\ldots,n\},$$
as the basis. In all collected data, for every dual feasible basis $I$, the set 
$$L(I)=\{p\in [0,1]\, :\, v_i\geq 0,\, i\in I\}$$
is a closed interval. We will thus refer to $L(I)$ as the {\it interval corresponding to} $I$. For the vast majority of bases $I$, the interval $L(I)$ is empty or degenerate (see Proposition \ref{Observation on realizable bases R} below). We will be interested in {\it realizable bases} $-$ those for which~$L(I)$ is of positive length. 

\subsection{Ordering the Realizable Bases}
Recall that the lexicographic order $\preceq$ on $\Z^m$ is defined as follows: for any two points $\textbf{x}=(x_1,\ldots, x_m)$ and $\textbf{y}=(y_1,\ldots,y_m)$ in $\Z^m$, we have $\textbf{x}\preceq \textbf{y}$ if either
\begin{enumerate}
\item for some $1\leq i\leq n$ we have $x_l=y_l$ for all $1\leq l\leq i-1$ and $x_i<y_i$, or
\item $\textbf{x}=\textbf{y}$.
\end{enumerate}
\tcb{Denote by $\mathcal{R}$ the set of all realizable bases.} The lexicographic order on $\Z^m$ induces an order relation on the set $\mathcal{R}$, also denoted by~$\preceq$. We simply order the elements of each basis in increasing order and view the basis as a tuple in $\Z^{k+1}$.

Let $\mathcal{L}$ be the set of all closed sub-intervals of positive length of $[0,1]$. We define an additional order relation, denoted by $\preceqdot$, on $\mathcal{L}$, as follows: given two intervals $J_1=~[a_1,b_1]$,~\,$J_2=[a_2,b_2]\in \mathcal{L}$, we have $J_1\precdott J_2$ if $J_1$ is to the left of $J_2$, namely $b_1\leq a_2$, or $J_1= J_2$. Note that, unlike $\preceq$, the order $\preceqdot$ is not total. 

\tcb{We now fix $n$ and $k$} and write $\mathcal{R}=\{I_1, I_2,\ldots, I_N\}$, ordered lexicographically:
$$I_1 \prec I_2 \prec \cdots \prec I_N.$$
\begin{example}\emph{
\begin{enumerate}
\item For $k=2$ and $n=10$, we have
\begin{equation*}
 \begin{split}
 \mathcal{R}=& \big\{\{0,1,10\}, \{1,2,10\},\{2,3,10\},\{3,4,10\},\{4,5,10\},\{5,6,10\},\{6,7,10\},\\
 &~~\{7,8,10\},\{8,9,10\}\big\}.   
 \end{split}   
\end{equation*} 
\item For $k=4$ and $n=6$, we have 
\begin{equation*}
 \begin{split}
 \mathcal{R}=& \big\{\{0,1,2,3,6\},\{0,1,3,4,6\},\{1,2,3,4,6\},\{1,2,4,5,6\},\{2,3,4,5,6\}\big\}.  \end{split}   
\end{equation*}
\end{enumerate}
}
\end{example}

\begin{conjecture}\label{conjecture on the order of I}\emph{
The mapping $L:\mathcal{R} \to \mathcal{L}$, taking each realizable basis to the corresponding interval, is order-preserving, namely
$$I, I' \in \mathcal{R}, \, I\prec I' \implies L(I) \precdott L(I').$$
\,\,\,\,\,In other words, writing $I_i=[c_i,d_i]$ for $1\leq i\leq N$, we have
$$0=c_0< d_0=c_1< d_1=c_2< \cdots =c_N< d_N=1.$$
}  
\end{conjecture}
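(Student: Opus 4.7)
The plan is to recast Conjecture~\ref{conjecture on the order of I} via LP duality and exploit the combinatorial structure of dual feasible bases from Theorem~\ref{pre sec, theorem on I}. By complementary slackness applied to the primal--dual pair~\eqref{pre sec, equation of LP primal in matrix form}--\eqref{pre sec, equation of LP dual in matrix form}, the dual objective at a dual feasible basis $I$ evaluates to
\[
Q_I(p) \;=\; \mathbb{E}[P_I(N)], \qquad N \sim \mathrm{Bin}(n,p), \qquad P_I(j) \;=\; \prod_{u \in I \setminus \{n\}} \frac{j - u}{n - u},
\]
a polynomial of degree $k$ in $p$. Theorem~\ref{pre sec, theorem on I} guarantees $P_I(j) \ge 0$ at every integer $j \in \{0,1,\ldots,n\}$, and weak duality gives $Q_I(p) \ge M(n,k,p)$, with equality precisely on $L(I)$. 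Hence $M(n,k,\cdot)$ is the lower envelope of the family $\{Q_I\}_I$, and $\mathcal{R}$ consists of those $I$ attaining the envelope on an interval of positive length. Theorems~\ref{resuls sec, theorem on first half} and~\ref{resuls sec, theorem on last half} already pin down the two anchor bases: $L(I_1) = [0, 1/(n-k+1)]$ for the lex-smallest $I_1 = \{0,1,\ldots,k-1,n\}$, and $L(I_N) = [1 - 1/(n-k+1), 1]$ for the lex-largest $I_N = \{n-k,\ldots,n-1,n\}$.

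Next I would classify single-pivot transitions between dual feasible bases. Writing $I = \{a_1, a_1+1, \ldots, a_{k/2}, a_{k/2}+1, n\}$, removing one element $s \in I \setminus \{n\}$ leaves its partner unpaired; to remain in the pair-plus-$n$ family, the incoming element must re-form a consecutive pair with the orphan. Thus $s' = s + 2$ (a \emph{rightward pair shift} $(a_j, a_j+1) \to (a_j+1, a_j+2)$, strictly lex-increasing) or $s' = s - 2$ (a \emph{leftward pair shift} $(a_j, a_j+1) \to (a_j-1, a_j)$, strictly lex-decreasing), each admissible only when the shifted pair does not collide with a neighboring one. Consequently the conjecture reduces to the following \emph{rightward-monotonicity lemma}: at every breakpoint of $M(n,k,\cdot)$, as $p$ increases, the pivot performed is a rightward pair shift.

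Proving this lemma is the main obstacle. The plan is to compare $Q_I(p)$ with $Q_{I^r}(p)$ for each candidate rightward shift $I^r$ of the $j$-th pair. A direct computation yields
\[
P_{I^r}(j) - P_I(j) \;=\; -\,\frac{2(n - j)(j - a_j - 1)}{(n - a_j)(n - a_j - 1)(n - a_j - 2)} \prod_{u \in I \setminus \{a_j,\,a_j+1,\,n\}} \frac{j - u}{n - u},
\]
which is nonnegative at integer $j \le a_j + 1$ and nonpositive at integer $j \ge a_j + 1$ (since $P_I(j), P_{I^r}(j) \ge 0$ at the remaining integers). Taking expectations under $N \sim \mathrm{Bin}(n,p)$, the sign of $Q_{I^r}(p) - Q_I(p)$ flips from positive to negative exactly once as $p$ grows, identifying the unique crossover point at which $I^r$ supplants $I$. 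A parallel computation for leftward shifts $I^l$ shows that $Q_{I^l}(p) > Q_I(p)$ whenever $p$ lies past the initial segment where $I^l$ itself could be optimal. Combining these sign analyses with Theorem~\ref{pre sec, theorem on basis change} should force the leaving variable at each breakpoint of $L(I)$ to be the left member $a_j$ of some pair rather than $a_j+1$, establishing the lemma.

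Once the rightward-monotonicity lemma is in hand, the conjecture follows: starting at $I_1$ and applying successive rightward shifts produces a strictly lex-increasing chain of realizable bases terminating at $I_N$. Since any two distinct dual feasible bases yield polynomials $P_I \ne P_{I'}$ (they differ in roots) and hence $Q_I \ne Q_{I'}$, the intervals $L(I), L(I')$ overlap in at most a finite set of points, so the chain exhausts $\mathcal{R}$ and coincides with its lex enumeration. Continuity of the lower envelope $M(n,k,\cdot)$ then forces $d_j = c_{j+1}$ at consecutive breakpoints, completing the proof.
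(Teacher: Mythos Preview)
The statement you are attempting to prove is presented in the paper as an \emph{open conjecture}, not a theorem; the paper offers no proof, only numerical evidence. So there is no ``paper's proof'' to compare against. That said, your strategy has two concrete gaps that prevent it from being a proof.

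First, your reduction to the \emph{rightward-monotonicity lemma} assumes that every transition between consecutive realizable bases is a single simplex pivot, hence a single pair shift. The paper's own data (Subsection~\ref{Subsection on Exceptional points} and Table~\ref{rational degenerate points for k=4}) documents \emph{exceptional points} at which several pairs shift simultaneously; for instance, for $(n,k)=(10,4)$ the transition at $p=1/2$ takes $\{2,3,5,6,10\}$ directly to $\{3,4,6,7,10\}$. Your single-pivot classification of admissible moves $s\mapsto s\pm2$ does not cover these multi-pair jumps, so the lemma as stated (``the pivot performed is a rightward pair shift'') is already contradicted by the numerics. The conjecture may still be true---all observed multi-pair jumps happen to be composites of rightward shifts---but your argument does not establish this.

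Second, and more seriously, the heart of your argument is the assertion that $Q_{I^r}(p)-Q_I(p)$ ``flips from positive to negative exactly once as $p$ grows,'' inferred from the pointwise sign pattern of $P_{I^r}(j)-P_I(j)$ in $j$. This inference is not automatic: a function of $j$ with one sign change, averaged against $\mathrm{Bin}(n,p)$, need not have exactly one sign change in $p$ without an additional total-positivity or variation-diminishing argument, which you have not supplied. The hedged phrasing (``should force the leaving variable\ldots'') signals that you recognise this step is incomplete. Absent a rigorous control on the number of roots of $Q_{I^r}-Q_I$ (and of $Q_{I^l}-Q_I$), the chain argument in your final paragraph does not go through, and neither does the conclusion that the chain of rightward shifts exhausts~$\mathcal{R}$.
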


\begin{example}\emph{
For $k=4$ and $n=10$, there are $28$ dual feasible bases, $12$ of which are realizable. Table \ref{table k=4 and n=10} lists all $I_i$-s, as well as the intervals $L(I_i)$ and their lengths $|L(I_i)|$. 
(Here, all numerical values are rounded to two or three significant digits in their decimal expansion.) The graph of the function $M(10,4,p)$ is plotted in Figure \ref{fig11}. The intervals~$L(I_i)$ are depicted alternately in red and cyan to emphasize the piecewise polynomiality of the function.}
\end{example}

\begin{table} 
    \centering
\begin{tabular}{|   C{1cm}| C{3cm} | C{3cm} | C{3cm} |}
 \hline
$i$ & $I_i$&  $L(I_i)$& $|L(I_i)|$ \\
 \hline
1& \{0,1,2,3,10\}&$~~~~~[0,1/7]$&1/7\\
 \hline
 2& \{0,1,3,4,10\}&~$[1/7,0.23]$& 0.090\\
 \hline
 3&  \{1,2,3,4,10\}&$[0.23, 0.26]$&0.022\\
 \hline
 4&  \{1,2,4,5,10\}&$[0.26,0.37]$&0.117\\
 \hline
 5& \{2,3,4,5,10\}&$[0,37,0.38]$&0.004\\
 \hline
6 & \{2,3,5,6,10\}&~$[0.38,1/2]$&0.123\\
 \hline
7 &\{3,4,6,7,10\} &~$[1/2,0.62]$&0.123\\
 \hline
8&\{4,5,6,7,10\}&$[0.62, 0.63]$&0.004\\
 \hline
9 &  \{4,5,7,8,10\}&$[0.63,0.74]$&0.117\\
 \hline
10 & \{5,6,7,8,10\}&$[0.74,0.77]$&0.022\\
 \hline
 11& \{5,6,8,9,10\}&~$[0.77,6/7]$&0.090\\
 \hline
12 &\{6,7,8,9,10\}&~~~~~$[6/7,1]$&1/7\\
 \hline
\end{tabular}
\vspace{0.25 cm}
\caption{Realizable bases, corresponding intervals, and their lengths for $k=4$ and $n=10$.}
\label{table k=4 and n=10}
\end{table}

\begin{figure}[H]
\centering
\includegraphics[width=0.80\linewidth]{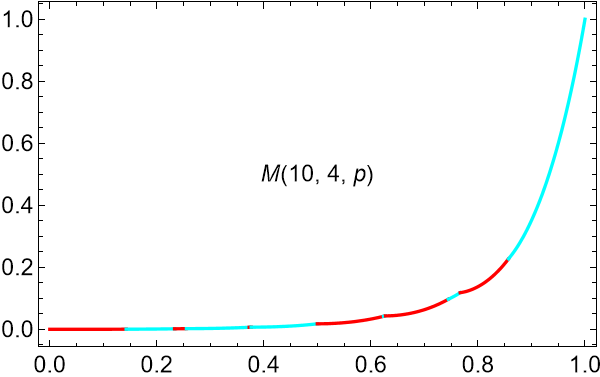}\\
\caption{A graphical depiction of $M(10,4,p)$.}
\label{fig11}
\end{figure}
\vspace{0.25 cm}

\subsection{Realizable Bases}\label{Subsection on realizable bases}
Let $I_i$ be an arbitrary fixed realizable basis. As $p$ varies from the left endpoint of $L(I_{i})=[c_i,d_i]$ to its right endpoint, the functions $v_j= v_j(p)$, $j\in I_i$ are non-negative. As we move beyond the right endpoint $d_i$, one of these functions becomes negative $-$ the solution of equation \eqref{pre sec, equation of LP primal in k-wise independent} involving only $v_j$-s with $j\in I_i$ stops being feasible, and the control passes to the next basis $I_{i+1}$, with corresponding interval $L(I_{i+1})= [c_{i+1},d_{i+1}]$, where $c_{i+1}=d_{i}$. 
\begin{conjecture}\label{conjecture on consecutive I-s}\emph{ \tcb{For at most $d_i$ breakpoints}, the transition from $I_{i}$ to $I_{i+1}$ involves a {\it minimal change} $-$ $I_{i+1}$ is obtained from $I_{i}$ by disposing of some $a_j\in I_{i}$ and adjoining $a_{j}+2$ in its stead. Namely, $I_{i}$ and $I_{i+1}$ are of the form
\begin{equation}\label{data section, equation of I(i) and I(i+1)}
\begin{array}{lll}
I_i&= &\{a_1,a_{1}+1, \ldots,a_{j-1}, a_{j-1}+1, \underline{a_{j}, a_{j}+1},\ldots, n\},\\
 I_{i+1}&=& \{a_1,a_{1}+1, \ldots,a_{j-1}, a_{j-1}+1, \underline{a_{j}+1, a_{j}+2},\ldots, n\},
\end{array}
\end{equation}
for some $1\leq j\leq k/2$.
}
\end{conjecture}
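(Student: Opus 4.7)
The plan is to analyze the transition at a breakpoint $d_i$ using the LP framework from Subsection \ref{subsection on LP}. Fix a realizable basis $I_i = \{a_1, a_1+1, \ldots, a_{k/2}, a_{k/2}+1, n\}$ with corresponding interval $L(I_i) = [c_i, d_i]$. Inside $L(I_i)$ all basic variables $v_j$ with $j \in I_i$ are strictly positive; at the right endpoint $p = d_i$ at least one of them vanishes. By standard simplex logic, the adjacent optimal basis $I_{i+1}$ is obtained by ejecting the leaving variable (the one hitting zero) and admitting one new index $m' \notin I_i$, subject to the constraint that the new basis remain dual feasible, i.e., of the paired form described in Theorem \ref{pre sec, theorem on I}.

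I would first argue that the leaving variable is always a ``left'' element $a_j$ of some pair, rather than $a_j + 1$. On $L(I_i)$ the vector $v$ is given by $B(I_i)^{-1} d(p)$, where the entries of $B(I_i)^{-1}$ are the explicit rational expressions from \eqref{pre sec, equation of elements of matrix B} and $d(p) = (1, np, \binom{n}{2}p^2, \ldots, \binom{n}{k}p^k)^T$. Differentiating with respect to $p$ and invoking Lemma \ref{prelim, lemma for Bonferroni polynomial}$.2$, one can extract the sign pattern of $v'(p)$ at $p = d_i$. The model calculation is the one carried out explicitly in \eqref{proof sec, equation of vk-1 for k in second interval}--\eqref{proof sec, equation of vk-4 for k in second interval} in the proof of Theorem \ref{resuls sec, theorem on second interval 1} and in \eqref{proof sec, equation of v(n-k) case 2 second last interval}--\eqref{proof sec, equation of v(n-k+3) case 2 second interval} in the proof of Theorem \ref{resuls sec, theorem on second last interval 1}: in each instance it is the left element of the relevant pair that contributes the vanishing Bonferroni factor at the breakpoint, and the task is to lift this case-by-case observation to a uniform statement.

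Once the leaving variable is identified as $m = a_j$, the entering variable is essentially forced. Dual feasibility of $I_{i+1}$ requires, by Theorem \ref{pre sec, theorem on I}, that the indices in $I_{i+1} \setminus \{n\}$ (for even $k$) decompose into consecutive pairs; removing $a_j$ leaves the isolated element $a_j + 1$, and re-pairing it via a single-index insertion demands adding either $a_j$ (just ejected) or $a_j + 2$. The first option is void, so $m' = a_j + 2$, yielding precisely the form \eqref{data section, equation of I(i) and I(i+1)}. Admissibility further requires $a_j + 2 \notin I_i$, i.e., $a_{j+1} \neq a_j + 2$; in the borderline case where two adjacent pairs would collide, Conjecture \ref{conjecture on the order of I} together with Theorem \ref{pre sec, theorem on basis change} ensures that the next realizable basis in the lexicographic order avoids this collision by picking the next available $j$ instead.

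The principal obstacle is the sign analysis at the heart of the first step: proving that at every right endpoint $d_i$ of a realizable interval the vanishing coordinate of $v$ is necessarily indexed by a left pair-element. This requires, in full generality, the monotonicity pair $v'_{a_j}(d_i) < 0 < v'_{a_j+1}(d_i)$, which in turn depends on a uniform control of the sign pattern of the entries $b_{s,t}$ of $B(I_i)^{-1}$ via the formula \eqref{pre sec, equation of elements of matrix B}. The alternation in sign of the product $\prod_{u \in I_i - \{s\}}(u-s)$ according to the parity of the position of $s$ within the ordered pair structure, combined with the combinatorial identities collected in Lemma \ref{prelim, lemma for Bonferroni polynomial} and the determinantal identities \eqref{proof sec, equation on the sum of binomial coefficients positive}--\eqref{proof sec, equation of elementary combinatorial identity}, appears to be exactly what drives the desired sign pattern; making this into a general proof valid for arbitrary $n$, $k$, and an arbitrary realizable $I_i$ would constitute the substantive content of a full argument.
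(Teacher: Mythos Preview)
The statement you are attempting to prove is labelled \emph{Conjecture} in the paper, not Theorem or Proposition. It is one of the eight open conjectures the authors present in Section~\ref{numerical data} based solely on numerical evidence; the paper contains no proof, and the authors explicitly describe these as observations they ``conjecture will hold in general'' and hope will help ``future researchers with the very difficult task'' of settling them. So there is no paper proof to compare your proposal against.

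As for the proposal itself: you correctly identify the LP/simplex mechanism and the dual-feasibility constraint from Theorem~\ref{pre sec, theorem on I} that forces any single-index swap to be of the form $a_j \mapsto a_j+2$. But the heart of the matter---that at each right endpoint $d_i$ exactly one basic variable vanishes (generically), and that the vanishing index is always the left element $a_j$ of a pair---is precisely what you flag as ``the principal obstacle,'' and you do not resolve it. The explicit computations in the proofs of Theorems~\ref{resuls sec, theorem on second interval 1} and~\ref{resuls sec, theorem on second last interval 1} handle only the very first and last transitions, where the basis has the extremal form $\{0,1,\ldots,k-1,n\}$ or its mirror; extracting a uniform sign pattern of $b_{s,t}$ from~\eqref{pre sec, equation of elements of matrix B} for an arbitrary realizable paired basis is the entire difficulty, and nothing in your sketch or in the paper supplies it. Your proposal is thus a reasonable outline of what a proof would have to establish, but it is not a proof, and the gap you yourself name is the whole conjecture.
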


We note that a minimal change may look somewhat different from \eqref{data section, equation of I(i) and I(i+1)}. For example,~$I_i$ could have contained some block of the form $l,l+1,l+2,l+3$ (in addition to other elements), and $l$ would be replaced by $l+4$. However, this is never the case in the collected data.

Sometimes, though, the basis undergoes more than one change at a point; several elements $i_1,i_2,\ldots, i_r$ leave the basis simultaneously, and $r$ other elements join the basis for some $r\geq 2$. Points $d_i$, where such a transition occurs, are {\it exceptional} \tcb{(see Section \ref{Subsection on Exceptional points}).}

\begin{conjecture}\label{conjecture on several changes 1}\emph{In the transition from $I_{i}$ to $I_{i+1}$, if several elements $i_1,i_2,\ldots,i_r$ leave the basis simultaneously, they are
replaced by $i_1+2,i_2+2,\ldots, i_r +2$.
}
\end{conjecture}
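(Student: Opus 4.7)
The plan is to combine the paired structure of dual feasible bases (Theorem \ref{pre sec, theorem on I}) with the lexicographic ordering of realizable bases (Conjecture \ref{conjecture on the order of I}) and a perturbation argument that reduces the exceptional transition at $d_i$ to a limit of ordinary transitions. Fix an exceptional breakpoint $d_i$ and let $i_1 < i_2 < \cdots < i_r$ be the elements of $I_i$ whose associated variables $v_{i_l}$ vanish simultaneously at $p=d_i$; write $J$ for the set of elements entering $I_{i+1}$ in their stead. The goal is to show $J = \{i_1+2,\ldots,i_r+2\}$.

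First I would use the pair constraint of Theorem \ref{pre sec, theorem on I}. Each $i_l$ occupies either a lower position $(i_l,i_l+1)$ or an upper position $(i_l-1,i_l)$ in $I_i$. If $i_l$ is in the upper position and its partner $i_l-1$ does not leave, then re-pairing $i_l-1$ in $I_{i+1}$ would force the entering element to be $i_l-2$; this gives $I_{i+1} \prec I_i$ in lexicographic order, contradicting Conjecture \ref{conjecture on the order of I}. Hence for every $l$ either $i_l$ is the lower element of its pair (so its surviving partner $i_l+1$ can only be re-paired by adjoining $i_l+2$), or the entire pair $(i_l,i_l+1)$ leaves and is replaced by some consecutive pair $(i_l+m,i_l+m+1)$ with $m\geq 1$.

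The main obstacle is ruling out $m\geq 3$ in the second case, i.e., showing that whole-pair replacements also shift by exactly~$2$. I would handle this via a deformation argument: perturb the LP \eqref{pre sec, equation of LP primal in matrix form} slightly so as to lift the degeneracy at $d_i$, splitting the single exceptional breakpoint into $r$ nearby ordinary breakpoints. At each ordinary breakpoint Conjecture \ref{conjecture on consecutive I-s} applies and contributes a minimal change in which one basis element is shifted by exactly~$2$. Letting the perturbation tend to zero, and invoking dual nondegeneracy of the dual feasible bases together with uniqueness of the optimizing measure \cite{Prekopa3}, one recovers $I_{i+1}$ as the cumulative effect of $r$ independent shifts by~$2$. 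A more combinatorial alternative would be a direct minimum-ratio pivot analysis at $d_i$ using the explicit formula \eqref{pre sec, equation of elements of matrix B} for $B(I)^{-1}$, whose entries behave like finite differences of monomials and can be expected to select the smallest feasible shift.

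The technical heart of either approach is to verify that the $r$ simultaneous pivots can be carried out with the \emph{same} shift value of $2$, rather than some of them combining into a single larger jump; this is where the strict dual feasibility condition \eqref{pre sec, equation of condition for dual nondegenerate} and the lexicographic ordering of $\mathcal{R}$ must be brought to bear together, and where I expect most of the work to lie.
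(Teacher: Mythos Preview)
The statement you are trying to prove is listed in the paper as a \emph{conjecture}, not a theorem. It appears in Section~\ref{numerical data}, where the authors explicitly present it as one of eight open conjectures supported only by numerical evidence for the ranges of $n$ and $k$ they tested. There is no proof in the paper to compare your proposal against.

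Your proposal, moreover, does not constitute a proof even in outline. The argument leans essentially on Conjecture~\ref{conjecture on the order of I} (that $L$ is order-preserving) and Conjecture~\ref{conjecture on consecutive I-s} (that non-exceptional transitions are minimal), both of which are themselves unproven conjectures in the paper. Using one open conjecture to establish another is not a valid strategy unless you first prove the supporting conjectures, and you give no indication of how to do so. Beyond this circularity, the perturbation step is not actually carried out: you assert that a small perturbation of the LP splits an exceptional breakpoint into $r$ ordinary ones to which Conjecture~\ref{conjecture on consecutive I-s} applies, but the perturbed LP is no longer the problem \eqref{pre sec, equation of LP primal in matrix form}, so neither Theorem~\ref{pre sec, theorem on I} nor Conjecture~\ref{conjecture on consecutive I-s} (which concerns the \emph{unperturbed} problem) is available for it. The claim that the limit of the perturbed optimal bases recovers $I_{i+1}$ also needs justification, since basis limits under LP perturbation are delicate precisely at degenerate points. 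In short, the proposal identifies some plausible structural constraints but does not close the gap you yourself flag in the last paragraph, and it rests on premises the paper leaves open.
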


\begin{example}\emph{
\begin{enumerate}
\item For $k=4$ and $n=10$, the bases $\{2,3,5,6,10\}$ and $\{3,4,6,7,10\}$ are consecutive realizable bases. We have two simultaneous changes: $2, 5$ leave, and $4, 7$ join at the exceptional point $1/2$. 
\item For $k=6$ and $n=23$, the bases $\{6,7,10,11,14,15,23\}$ and  $\{7,8,11,12,15,16,23\}$ are consecutive realizable bases. We have three simultaneous changes: $6, 10, 14$ leave, and $8, 12, 16$ join at the exceptional point $1/2$. 
\end{enumerate}
}
\end{example}

\begin{remark}\emph{
In view of Conjecture \ref{conjecture on several changes 1}, at exceptional points the basis undergoes several simultaneous changes as in \eqref{data section, equation of I(i) and I(i+1)}.  Conjecture \ref{conjecture on the order of I} does not exclude the possibility that a basis such as $\{10, 11, 20, 21, n\}$, for example, changes to $\{11, 12, 19, 20, n\}$ or to $\{11, 12, 22, 23\}$. \tcb{However, we did not encounter any such transitions}.
}
\end{remark}

\tcb{If both Conjectures \ref{conjecture on consecutive I-s} and \ref{conjecture on several changes 1} hold, then Proposition \ref{Observation on realizable bases R} below holds.}

\newpage
\begin{prop}\label{Observation on realizable bases R}
\tcb{Assume that both Conjectures} \ref{conjecture on consecutive I-s} and \ref{conjecture on several changes 1} hold. Then
\tcb{the following two statements hold}:
\emph{
\begin{enumerate}
\item For every $n$ and even $k\leq n-2$,
\begin{equation}\label{conjecture section, inequality for R}
 |\mathcal{R}|\leq \frac{k(n-k)}{2}+1.   
\end{equation}
\item If the transitions from $I_{i}$ to $I_{i+1}$, for all $1\leq i\leq N-1$, involve minimal changes (in other words, there are no exceptional points), then 
$$|\mathcal{R}|=\frac{k(n-k)}{2}+1.$$
\end{enumerate}
}
\end{prop}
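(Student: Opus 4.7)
The plan is to attach a scalar potential to each realizable basis, compute it at the lex-extreme realizable bases, and use Conjectures \ref{conjecture on consecutive I-s} and \ref{conjecture on several changes 1} to bound how quickly this potential can grow along the chain $I_1 \prec I_2 \prec \cdots \prec I_N$. Parametrize a generic realizable basis as
$$I = \{a_1,\, a_1+1,\, a_2,\, a_2+1,\, \ldots,\, a_{k/2},\, a_{k/2}+1,\, n\},$$
with $0 \leq a_1$, $a_{j+1} \geq a_j + 2$, and $a_{k/2} \leq n-2$, and set
$$\Phi(I) \;:=\; \sum_{j=1}^{k/2} a_j.$$

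First I would identify the lex-extreme realizable bases. The lex-smallest admissible tuple is $a_j = 2(j-1)$, giving $I_1 = \{0,1,2,\ldots,k-1,n\}$, and the lex-largest is $a_j = n-k+2(j-1)$, giving $I_N = \{n-k,n-k+1,\ldots,n-1,n\}$. Theorem \ref{resuls sec, theorem on first half} exhibits $I_1$ as the support of the optimizing measure on the first interval $\bigl[0,\tfrac{1}{n-k+1}\bigr]$, and Theorem \ref{resuls sec, theorem on last half} does the same for $I_N$ on the last interval, so both are actually realizable (not merely dual feasible). A direct calculation then gives
$$\Phi(I_N) - \Phi(I_1) \;=\; \sum_{j=1}^{k/2}(n-k) \;=\; \frac{k(n-k)}{2}.$$

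Next I would use the two conjectures to lower-bound each increment $\Phi(I_{i+1}) - \Phi(I_i)$. By Conjecture \ref{conjecture on consecutive I-s}, at a non-exceptional breakpoint the transition replaces one pair $\{a_j,a_j+1\}$ by $\{a_j+1,a_j+2\}$, so $\Phi$ increases by exactly $1$. By Conjecture \ref{conjecture on several changes 1}, at an exceptional breakpoint the $r \geq 2$ departing elements $i_1,\ldots,i_r$ are the bottom elements of $r$ distinct pairs and are replaced by $i_1+2,\ldots,i_r+2$; equivalently, $r$ of the pair-starts $a_j$ jump up by $1$ simultaneously, raising $\Phi$ by exactly $r \geq 2$. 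In every case $\Phi(I_{i+1}) - \Phi(I_i) \geq 1$, and telescoping yields
$$N - 1 \;\leq\; \sum_{i=1}^{N-1}\bigl(\Phi(I_{i+1}) - \Phi(I_i)\bigr) \;=\; \Phi(I_N) - \Phi(I_1) \;=\; \frac{k(n-k)}{2},$$
which is part (1). For part (2), the absence of exceptional breakpoints forces every increment to equal $1$, so $N - 1 = k(n-k)/2$ exactly.

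The main (and essentially only) obstacle is confirming that in Conjecture \ref{conjecture on several changes 1} the $r$ departing elements are genuinely the bottom elements of $r$ distinct pairs — as opposed to, say, one bottom and one top of the same or adjacent pairs — since otherwise the increment could fail to equal $r$. This is forced by the fact that the arrivals $i_s + 2$ must reassemble with the surviving elements of $I_i$ into a valid consecutive-pair configuration in $I_{i+1}$; the remark immediately following Conjecture \ref{conjecture on several changes 1} flags precisely the alternative scenarios that the conjecture rules out, and the rest of the argument is bookkeeping.
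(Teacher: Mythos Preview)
Your proof is correct and follows essentially the same potential argument as the paper, which uses the identical function $S(I)=\sum_{j=1}^{k/2}a_j$ and the same telescoping bound. Your flagged obstacle is in fact a non-issue: since $\sum_{x\in I\setminus\{n\}}x = 2\Phi(I)+k/2$, replacing any $r$ elements of $I_i\setminus\{n\}$ by their $+2$ shifts raises $\Phi$ by exactly $r$ regardless of whether the departing elements are pair-bottoms.
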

\begin{proof}
Denote  
$$S(I)= \sum_{j=1}^{k/2}a_{j},\qquad I=\{a_1, a_1+1, \ldots, a_{k/2}, a_{k/2}+1, n\}.$$
From Conjectures \ref{conjecture on consecutive I-s} and \ref{conjecture on several changes 1}, it follows that for every two consecutive realizable bases $I_i, I_{i+1}$, the difference $S(I_{i+1})-S(I_{i})$ is twice the number of elements of $I_i$ replaced by others in $I_{i+1}$. In particular,
$$S(I_{i+1})- S(I_i)\geq 2, \qquad 1\leq i\leq N-1.$$
Hence
\begin{equation}\label{data section equation of S(IN)- S(I1)}
S(I_N)- S(I_1)= \sum_{i=1}^{N-1}\left(S(I_{i+1})- S(I_{i})\right)\geq 2(N-1).    
\end{equation}
By Thereoms \ref{resuls sec, theorem on first half} and \ref{resuls sec, theorem on last half},
$$I_1= \{0,1,\ldots, k-1,n\}, \qquad I_N= \{n-k, n-k+1,\ldots,n-1,n\}.$$ 
We then have
\begin{equation}\label{data section equation of S(I1)}
\begin{split}
 S(I_1)= 0 + 2 + \cdots + (k-2)= \frac{k}{2}(k-2),
\end{split}   
\end{equation}
and 
\begin{equation}\label{data section equation of S(IN)}
\begin{split}
 S(I_N)&= (n-k) + (n- k + 2) + \cdots +(n-2)\\
 &= nk - (2+ 4+\cdots + k)= nk -k \left(\frac{k}{2}+1\right).
\end{split}   
\end{equation}
By \eqref{data section equation of S(IN)- S(I1)}, \eqref{data section equation of S(I1)}, and \eqref{data section equation of S(IN)}, we obtain
$$N\leq 1 + \frac{1}{2}(S(I_N)- S(I_1))= 1 + \frac{k(n-k)}{2},$$
which is equivalent to \eqref{conjecture section, inequality for R}.
Now, in the case when there is no exceptional point, we have $S(I_{i+1})- S(I_i)=2$ for all $1\leq i\leq N-1$. \tcb{We thus} have equality in \eqref{data section equation of S(IN)- S(I1)}, and the \tcb{second claimed statement follows}.
\end{proof}
\vspace{0.25 cm}

\subsection{Exceptional Points}\label{Subsection on Exceptional points}
A pair $(n,k)$ is an {\it exceptional pair} if there exists at least one exceptional point for $(n,k)$.
Numerical data suggest that, for a fixed $k$, exceptional pairs are quite rare (see Table \ref{number of bad integers}).
\begin{table}[H] 
    \centering
\begin{tabular}{|   C{2cm}| C{4cm} | C{5cm} |}
 \hline
$k$ & Range of $n$& Number of exceptional pairs $(n,k)$\\
 \hline
\multirow{5}{*}{4}& ~~~[6,100] & 19 \\
& [101,200] & 15 \\
& [201,300] & 13 \\
& [301,400] & 13 \\
& [401,500] & 11 \\
\hline
\multirow{2}{*}{6}& ~~~[8,100] & ~6 \\
& [101,200] & ~3 \\
\hline
8&~[10,100]& ~0\\
\hline
10&~~[12,50]&~0\\
\hline
\end{tabular}
\vspace{0.25 cm}
\caption{Statistics for exceptional pairs.}
\label{number of bad integers}
\end{table}

\begin{conjecture}\label{obs9}\emph{
All the exceptional points are rational.
}  
\end{conjecture}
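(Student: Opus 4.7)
Each coordinate $v_j(p)$ of the optimal distribution is a polynomial in $p$ of degree at most $k$ with rational coefficients, because
\[
v_j(p) \;=\; \frac{1}{\binom{n}{j}}\sum_{t=0}^{k} b_{j,t} \binom{n}{t} p^t
\]
and the entries $b_{j,t}$ from \eqref{pre sec, equation of elements of matrix B} lie in $\mathbb{Q}$. Consequently, every transition point between realizable bases is algebraic; the question is when it is in fact rational. At a non-exceptional (``minimal'') transition only a single $v_{i_j}$ vanishes, and the transition point can be genuinely irrational, as witnessed by the cubic in \eqref{result sec, equation of f(2,k-4)} of Theorem \ref{resuls sec, theorem on second interval 1}. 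At an exceptional point $d_i$, by contrast, Conjecture \ref{conjecture on several changes 1} asserts that $r \geq 2$ coordinates $v_{i_1}(p), \ldots, v_{i_r}(p)$ vanish simultaneously, with the rigid combinatorial structure that each $i_l$ is replaced by $i_l+2$. The plan is to use this over-determination to force $d_i \in \mathbb{Q}$.

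First I would expand each $v_{i_l}(p)$ as a rational linear combination of Bonferroni polynomials of the form $B(n-i, k-i, p)$, extending the explicit computations already carried out in the proofs of Theorems \ref{resuls sec, theorem on second interval 1} and \ref{resuls sec, theorem on second last interval 1}. Using Lemma \ref{prelim, lemma for Bonferroni polynomial} one can then systematically factorize and recombine these polynomials, and the $i_l \mapsto i_l+2$ pattern should cause the leading terms in the pairwise differences $v_{i_l}(p) - v_{i_m}(p)$ to cancel. The aim is to reduce the system $\{v_{i_l}(p) = 0\}_{l=1}^r$, by successive $\mathbb{Q}[p]$-linear combinations, to a single polynomial equation of small degree whose root must be rational --- ideally a linear factor of the form $1 - m \cdot p$ for some integer $m$, analogous to the Bonferroni factor $B(n-k+1, 1, p)$ that already governs the transition at $p_1 = 1/(n-k+1)$.

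A complementary lever is the involution $(p, v_i) \mapsto (1-p, v_{n-i})$, whose footprint is visible in the symmetric formulas of Theorems \ref{resuls sec, theorem on first half}--\ref{resuls sec, theorem on second last interval 1} and in the identity $\overline{p}_2 = 1 - p_2$ noted after Theorem \ref{resuls sec, theorem on second last interval 1}. This involution permutes realizable bases and maps each interval $L(I)$ to its mirror $1 - L(I)$; an exceptional transition fixed by it must therefore occur at $p = 1/2$, which accounts for the exceptional points observed in the examples just above. The main obstacle is performing the $\mathbb{Q}[p]$-gcd reduction uniformly across all combinatorial types of exceptional transitions, and in particular ruling out the possibility that the surviving common factor is an irreducible polynomial of degree $\geq 2$ over $\mathbb{Q}$. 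The symmetry argument should be the right tool here: any Galois conjugate $d' \neq d$ of an exceptional $d$ would itself have to occur as the endpoint of some $L(I_j)$, and reconciling this with the lexicographic ordering of realizable bases (Conjecture \ref{conjecture on the order of I}) should force $d' = 1-d$, leaving only very restricted algebraic possibilities that can then be eliminated one by one.
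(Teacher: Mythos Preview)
The statement you are attempting to prove is labeled \emph{Conjecture} in the paper, not a theorem. The paper offers no proof whatsoever: Section~\ref{numerical data} presents it as one of eight open conjectures supported solely by the numerical data in Table~\ref{rational degenerate points for k=4} and the remarks surrounding it. There is therefore no ``paper's own proof'' to compare against.

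Your proposal is a heuristic plan rather than a proof, and it has genuine gaps. First, it leans on Conjecture~\ref{conjecture on several changes 1} (the $i_l\mapsto i_l+2$ replacement rule), which is itself unproven; a proof of Conjecture~\ref{obs9} conditional on other conjectures would need to be flagged as such. Second, and more seriously, the Galois step does not work as stated. You claim that any Galois conjugate $d'$ of an exceptional point $d$ would itself have to be an endpoint of some $L(I_j)$, but this is unjustified: the polynomials $v_{i_l}(p)\in\mathbb{Q}[p]$ typically have several roots, and only one of them is the actual transition point---the others lie outside the relevant interval or correspond to no basis change at all. The Galois action permutes \emph{all} roots of the minimal polynomial, not just those with combinatorial meaning, so there is no mechanism forcing $d'$ to be an interval endpoint, let alone an exceptional one. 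Likewise, the symmetry $p\mapsto 1-p$ acts on the set of exceptional points, but there is no reason it should coincide with the Galois action on a given algebraic number; the inference $d'=1-d$ is a non sequitur. The reduction-to-a-linear-factor idea is appealing but remains entirely speculative: you would need to show, uniformly over all exceptional configurations, that the $\mathbb{Q}[p]$-ideal generated by the vanishing $v_{i_l}$'s contains a degree-one element, and nothing in the plan explains why the higher-degree irreducible factors visible already in \eqref{result sec, equation of f(2,k-4)} cannot survive as common factors at an exceptional point.
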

Table \ref{rational degenerate points for k=4} lists all exceptional points for $n$ in the range $[7,100]$ and $k=4$. \tcb{For $k=6$ in the range $[8,200]$}
$$23, 28, 35, 67, 76, 87, 135, 148, 163,$$
are the only $n$-s for which $(n,6)$ is exceptional. For each of these, $p=1/2$ is the only exceptional point.
\begin{table}[!htbp] 
    \centering
\begin{tabular}{|C{2.1cm} | L{5cm} | C{2.1cm} | L{5cm} |}
 \hline
$n$& Exceptional points &$n$& Exceptional point\\
 \hline
10&1/2& 52&2/5,\,\, 3/5\\
 \hline
 11&1/3, \,\, 2/3& 56&1/3, \,\, 2/3\\
 \hline
 17&1/2& 65&1/6, \,\, 1/2, \,\, 5/6\\
 \hline
 27&2/5, \,\, 3/5& 66&1/4, \,\, 3/4\\
 \hline
 29&1/6, \,\, 1/3, \,\, 2/3, \,\, 5/6& 77& 1/5, \,\, 3/10, \,\, 7/10, \,\, 4/5\\
 \hline
 34&1/4, \,\, 3/4& 82&1/2\\
 \hline
 37&1/2& 83&4/9,  \,\, 5/9\\
 \hline
 50&1/2& 92&1/3,  \,\, 2/3\\
 \hline
  51&1/7, \,\, 3/7, \,\, 4/7, \,\, 6/7& 100&1/7, \,\, 2/7, \,\, 5/7, \,\, 6/7\\
 \hline
\end{tabular}
\vspace{0.25 cm}
\caption{Exceptional points for $6\leq n\leq 100$ for $k=4$.}
\label{rational degenerate points for k=4}
\end{table}
\vspace{0.25 cm}

\subsection{Symmetry of Bases and of Intervals}
Theorems \ref{resuls sec, theorem on first half} and \ref{resuls sec, theorem on last half} show that $L(I_1)$ and $L(I_N)$ are mirror images of each other with respect to the point $p=1/2$. The same holds by Theorems \ref{resuls sec, theorem on second interval 1} and \ref{resuls sec, theorem on second last interval 1} for $L(I_2)$ and $L(I_{N-1})$. Moreover, the bases themselves enjoy a similar symmetry property. Namely, ignoring the number $n$ in the bases (as it belongs to all of them), $I_1$ and $I_N$ are mirror images with respect to the number $\frac{n-1}{2}$, and so are $I_2$ and $I_{N-1}$. These symmetries hold in general \tcb{for the collected data.}

\begin{conjecture}\label{conjecture on the symmetry of I}\emph{
\begin{enumerate}
\item If
$$I_i= \{a_1, a_1+1,a_2,a_2+1,\ldots, a_{k/2}, a_{k/2}+1,n\},$$
where $1\leq i\leq N$, then
$$I_{N+1-i}=\{n-a_{k/2}-2, n-a_{k/2}-1,\ldots, n-a_1-1, n-a_1,n\}.~~~~~~~~~~~$$
\item Denoting $L(I_i)=[c_i,d_i]$ for each $i$, we have $c_{N+1-i}=1-d_i$ and $d_{N+1-i}= 1- c_i$.
\end{enumerate}
}
\end{conjecture}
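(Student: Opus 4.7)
My plan is to factor the involution $\sigma$ as $\sigma=\mathrm{Shift}\circ\tau$, where $\tau$ is the bit-flip $I\mapsto n-I$ and $\mathrm{Shift}$ is the operation on bases of the form $\{0\}\cup\{(c_j,c_j+1)\}_{j=1}^{k/2}$ that replaces $0$ with $n$ and simultaneously subtracts $1$ from every pair, producing $\{n\}\cup\{(c_j-1,c_j)\}_{j=1}^{k/2}$. A direct check gives $\mathrm{Shift}(n-I)=\sigma(I)$ for every $I$ of the form described in Theorem \ref{pre sec, theorem on I}, so part (2) of the conjecture reduces to the two identities $L(n-I)=1-L(I)$ and $L(\mathrm{Shift}(J))=L(J)$.

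The first identity follows from the probabilistic bit-flip $X_i\mapsto 1-X_i$, which gives a bijection $A_E(n,k,p)\to A_E(n,k,1-p)$ sending $v_i\mapsto v_{n-i}$; equivalently, the substitution $v_j\to v_{n-j}$ interchanges the LP constraints \eqref{pre, sec, equation sum of vi equal to pi in the first form} with their complementary form $\sum_{j=0}^{n-i}\binom{n-i}{j}v_j=(1-p)^i$, obtained by applying $k$-wise independence to the events $\{X_{i_\ell}=0\}$.

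For the second identity I plan to verify the ansatz
\[
v^{(\mathrm{Shift}(J))}_{j-1}(p)\;=\;\frac{1-p}{p}\,v^{(J)}_j(p), \qquad j\in J-\{0\},
\]
together with $v_n^{(\mathrm{Shift}(J))}$ determined by the $i=0$ equation. A short calculation combining both forms of $J$'s LP constraints with Pascal's identity $\binom{n-i}{j-1}=\binom{n-i+1}{j}-\binom{n-i}{j}$ confirms that this ansatz satisfies the LP equations for $\mathrm{Shift}(J)$; since $(1-p)/p>0$ on $(0,1)$, the pair-positivity conditions then transport unchanged between $J$ and $\mathrm{Shift}(J)$. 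What I expect to be the main obstacle is showing that the two ``corner'' polynomials $v_0^{(J)}(p)$ and $v_n^{(\mathrm{Shift}(J))}(p)$ are both nonnegative on all of $[0,1]$, so that neither imposes a further constraint. In the special case $J=\{0,1,\ldots,k\}$, $v_0^{(J)}$ coincides with $B(n,k,p)$, which is nonnegative on $[0,1]$ for even $k$ by Theorem \ref{resuls sec, theorem on upper bounds}$.2$; the general case should be handled analogously by writing $v_0^{(J)}$ and $v_n^{(\mathrm{Shift}(J))}$ explicitly through \eqref{pre sec, equation of elements of matrix B} and invoking Bonferroni-type positivity estimates in the spirit of Proposition \ref{result, proposition for B(n,l,p)}.

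Combining the identities yields $L(\sigma(I))=L(\mathrm{Shift}(n-I))=L(n-I)=1-L(I)$, so that $L(I_i)=[c_i,d_i]$ implies $L(\sigma(I_i))=[1-d_i,\,1-c_i]$. Invoking Conjecture \ref{conjecture on the order of I}, which asserts that the realizable bases are indexed by the order of their disjoint intervals, mirror reflection through $p=1/2$ reverses this order, forcing $\sigma(I_i)=I_{N+1-i}$; both parts of the conjecture follow at once.
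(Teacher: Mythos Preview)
This statement is presented in the paper as an \emph{open conjecture}; the paper gives no proof, only numerical evidence.  So there is no ``paper's proof'' to compare your proposal against, and what you have written should be read as an attempted proof of an open problem.

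Your factorisation $\sigma=\mathrm{Shift}\circ\tau$ is sound, and the bit-flip identity $L(n-I)=1-L(I)$ is indeed immediate.  Two remarks on the remainder.

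First, the ``main obstacle'' you flag is not an obstacle at all.  Since $\mathrm{Shift}(J)$ is a dual feasible basis for the maximisation problem (it has exactly the form in Theorem~\ref{pre sec, theorem on I}), Theorem~\ref{pre sec, theorem on B(I) tight bound} gives $v_n^{(\mathrm{Shift}(J))}(p)\ge M(n,k,p)\ge 0$ for every $p\in[0,1]$.  And $v_0^{(J)}(p)=v_n^{(I)}(1-p)$ by the very bit-flip you already invoked (take $j'=0$ in $v_{j'}^{(n-I)}(1-p)=v_{n-j'}^{(I)}(p)$), so this is $\ge M(n,k,1-p)\ge 0$ as well.  No Bonferroni-type estimate is needed.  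The substantive step that \emph{does} need a full verification is your ansatz $v_{j-1}^{(\mathrm{Shift}(J))}=\tfrac{1-p}{p}\,v_j^{(J)}$; your sketch using the mixed constraints $\sum_j\binom{n-s-t}{j-s}v_j=p^s(1-p)^t$ for $s+t\le k$ is the right idea, but it is the crux and should be carried out in full rather than asserted.

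Second, and more seriously, your closing step appeals to Conjecture~\ref{conjecture on the order of I}, which is itself an open conjecture in the paper.  Granting $L(\sigma(I))=1-L(I)$, you still have to match $\sigma(I_i)$ with $I_{N+1-i}$.  The paper \emph{defines} the enumeration $I_1,\dots,I_N$ by the lexicographic order on $\mathcal R$, so either you must prove directly that $\sigma$ reverses the lex order on realizable bases (which is not obvious: the pair $(a_j,a_j+1)$ in $I$ lands at position $k/2{+}1{-}j$ in $\sigma(I)$, and later coordinates of $I$ and $I'$ can differ arbitrarily once the first discrepancy is found), or you must use that the lex order coincides with the interval order---precisely Conjecture~\ref{conjecture on the order of I}.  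As it stands, your argument establishes part~2 and the statement that $\sigma$ is an involution on $\mathcal R$ sending $L$ to $1-L$, but part~1 remains conditional on Conjecture~\ref{conjecture on the order of I}.
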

This implies
\begin{conjecture} \label{obs11} \emph{
If $d$ is an exceptional point for some $(n,k)$, then so is $1-d$.}
\end{conjecture}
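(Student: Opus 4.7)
The plan is to derive Conjecture \ref{obs11} as a direct consequence of Conjecture \ref{conjecture on the symmetry of I}, which is assumed. An exceptional point is, by definition in Subsection \ref{Subsection on realizable bases}, a common endpoint $d_i = c_{i+1}$ of two consecutive realizable intervals $L(I_i)$ and $L(I_{i+1})$ at which the transition from $I_i$ to $I_{i+1}$ involves $r \geq 2$ simultaneous replacements. So the task reduces to showing that the mirror image $1-d_i$ is again such a common endpoint and that the transition there involves the same number $r$ of simultaneous replacements.

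For the first point, I would apply Conjecture \ref{conjecture on the symmetry of I}(2). Let $d = d_i = c_{i+1}$. Then $c_{N+1-i} = 1 - d_i = 1-d$, and since endpoints of consecutive realizable intervals coincide we also have $c_{N+1-i} = d_{N-i}$. Hence $1-d = d_{N-i} = c_{N+1-i}$, so $1-d$ is itself a breakpoint, namely the one separating $L(I_{N-i})$ and $L(I_{N+1-i})$.

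For the second point, I would use Conjecture \ref{conjecture on the symmetry of I}(1) to check that the multiplicity of the transition is preserved under the involution $x \mapsto n-1-x$ acting on pair-heads (with $n$ always fixed). Suppose
\[
I_i = \{a_1, a_1+1, \ldots, a_{k/2}, a_{k/2}+1, n\}, \qquad
I_{i+1} = \{a_1', a_1'+1, \ldots, a_{k/2}', a_{k/2}'+1, n\},
\]
where $r$ of the pair-heads differ, each by $a_{j_s}' = a_{j_s} + 2$ for $s = 1,\ldots,r$, and the remaining $k/2 - r$ pair-heads agree. By Conjecture \ref{conjecture on the symmetry of I}(1), the pair-heads of $I_{N+1-i}$ are $\{n-a_j-2 : 1 \leq j \leq k/2\}$ and those of $I_{N-i}$ are $\{n-a_j'-2 : 1 \leq j \leq k/2\}$. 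Thus going from $I_{N-i}$ to $I_{N+1-i}$ we have exactly $r$ pair-heads changing, each by
\[
(n - a_{j_s}' - 2) \longmapsto (n - a_{j_s} - 2) = (n - a_{j_s}' - 2) + 2,
\]
which is again a block of $r$ simultaneous minimal shifts by $+2$. Hence the transition at $1 - d$ has multiplicity $r$, and since $r \geq 2$, the point $1-d$ is exceptional.

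The only real substance beyond quoting Conjecture \ref{conjecture on the symmetry of I} is the combinatorial check above, which is entirely routine. The main \emph{obstacle} is conceptual rather than technical: the statement is not actually proved unconditionally, since it rests on the unproven symmetry conjecture; any unconditional proof of Conjecture \ref{obs11} would presumably require first establishing Conjecture \ref{conjecture on the symmetry of I} (or at least its part concerning the endpoints of realizable intervals), which appears to be the genuinely hard step in this circle of observations.
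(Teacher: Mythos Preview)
Your proposal is correct and follows exactly the paper's approach: the paper simply writes ``This implies'' immediately after Conjecture \ref{conjecture on the symmetry of I}, and you have filled in the routine verification that the symmetry of bases and of interval endpoints forces the multiplicity of a transition at $d_i$ to equal that at $1-d_i$. One small remark: your second step invokes the specific $+2$ shift structure from Conjecture \ref{conjecture on several changes 1}, but this is not needed---since the map $x\mapsto n-1-x$ is a bijection on $I_i-\{n\}$, the cardinality $|I_i\setminus I_{i+1}|$ equals $|I_{N+1-i}\setminus I_{N-i}|$ directly, which already gives $r\ge 2$ at $1-d$; your argument is not wrong, just slightly more than required.
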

\vspace{0.25 cm}

\subsection{The Spread of the Bases}
Let $I =\{a_1, a_1+1,a_2,a_2+1,\ldots, a_{k/2}, a_{k/2}+1,n\}$ be a dual feasible basis. Since $n$ belongs to all such bases, we ignore it and define the {\it spread} of~$I$ by
$$\mathrm{Sp}(I)= |a_{k/2}+1-a_1|.$$ 
By Theorems \ref{resuls sec, theorem on first half} and \ref{resuls sec, theorem on last half}, $I_1$ and $I_N$ are the densest realizable bases, i.e., 
\begin{equation}\label{conjecture section equation of Sp}
\mathrm{Sp}(I_1)=\mathrm{Sp}(I_N)= k-1=\min\{\mathrm{Sp}(I_i),\, :\, i=1,2,\ldots,N\}.    
\end{equation}
There may be several realizable bases other than $I_1$ and $I_N$ with spread $k-1$. For example, for $n=10$ and $k=4$, we have $I_3= \{1,2,3,4,10\}$, $I_5= \{2,3,4,5,10\}$, $I_8=\{4,5,6,7,10\}$, and $I_{10}=\{5,6,7,8,10\}$, all of which are of minimal spread (see Table \ref{table k=4 and n=10}). 
It follows from Conjecture~\ref{conjecture on the symmetry of I} that
$$\mathrm{Sp}(I_i)= \mathrm{Sp}(I_{N+1-i}),\qquad i=1,2,\ldots, N.$$

The spread of a random dual feasible basis is usually not much less than $n$. However, \tcb{our numerical studies suggest} that all realizable bases are relatively quite concentrated; namely, the spread is much smaller than $n$. In fact, for fixed $n$ and $k$, the {\it maximum spread} $\mathrm{Msp}(n,k)$ is defined by
$$\mathrm{Msp}(n,k)=\max\{\mathrm{Sp}(I_i)\, :\, i=1,2,\ldots,N\}.$$

\begin{conjecture}\label{obs12}\emph{
For each fixed $k$, $\mathrm{Msp}(n,k)$ is a monotonic non-decreasing function of $n$.}
\end{conjecture}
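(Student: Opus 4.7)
The plan is to argue constructively: given a realizable basis $I^{\ast}=\{a_{1},a_{1}+1,\ldots,a_{k/2},a_{k/2}+1,n\}$ for $(n,k)$ that achieves the maximum spread $s=a_{k/2}+1-a_{1}$, produce a realizable basis for $(n+1,k)$ whose spread is at least $s$. The most natural candidate is
$$I^{\ast\ast}=\{a_{1},a_{1}+1,\ldots,a_{k/2},a_{k/2}+1,n+1\},$$
obtained by replacing $n$ with $n+1$; its spread equals $s$. Since $I^{\ast}$ satisfies the dual feasibility conditions of Theorem \ref{pre sec, theorem on I}, so does $I^{\ast\ast}$ for the $(n+1,k)$-version of the LP \eqref{pre sec, equation of LP primal in matrix form}, and the whole problem reduces to showing that
$$L(I^{\ast\ast})=\{p\in[0,1]\,:\,v_{i}^{(n+1,I^{\ast\ast})}(p)\ge 0,\;i\in I^{\ast\ast}\}$$
has positive length.

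To analyze $L(I^{\ast\ast})$, I would first write every $v_{i}^{(n+1,I^{\ast\ast})}(p)$ explicitly via the inverse-matrix formula \eqref{pre sec, equation of elements of matrix B}, using Lemma \ref{prelim, lemma for Bonferroni polynomial} to re-express the resulting sums in terms of Bonferroni polynomials, in the spirit of the proofs of Theorems \ref{resuls sec, theorem on second interval 1} and \ref{resuls sec, theorem on second last interval 1}. Because $n$ enters the coefficients $b_{s,t}$ only through the factors $(n-s)$ in the denominator and $(n-\alpha)$ in the numerator of \eqref{pre sec, equation of elements of matrix B}, the difference $v_{i}^{(n+1,I^{\ast\ast})}(p)-v_{i}^{(n,I^{\ast})}(p)$ should factor as an explicit rational expression in $n$ whose numerator decomposes into low-degree Bonferroni-type polynomials. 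Combined with the positivity statements of Proposition \ref{result, proposition for B(n,l,p)} and the assumed non-degeneracy of $L(I^{\ast})$, this would yield the required non-negativity of all $v_{i}^{(n+1,I^{\ast\ast})}$ on a suitable subinterval of $[0,1]$.

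A natural backup step, needed if the direct candidate $I^{\ast\ast}$ fails to be realizable for some $(n,k)$, would be to invoke Theorem \ref{pre sec, theorem on basis change}: starting from $I^{\ast\ast}$ in the lattice of dual feasible bases for $(n+1,k)$, perform a single swap on one of the paired columns that leaves $n+1$ untouched, yielding a neighbouring realizable basis whose spread is either preserved or increased. Combined with Conjecture \ref{conjecture on the symmetry of I}, one may further restrict attention to bases lying in the lexicographic first half, trimming the case analysis roughly in half.

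The main obstacle is precisely the last step: the coefficients $b_{s,t}$ depend on \emph{every} element of $I$, so replacing $n$ by $n+1$ simultaneously perturbs all of the functions $v_{j}(p)$ for $j\in I^{\ast\ast}$, and it is not a priori clear that $L(I^{\ast\ast})$ remains non-degenerate. The numerical evidence in Table \ref{number of bad integers} (where exceptional pairs become sparser as $k$ grows) suggests that the generic situation is benign, with only finitely many $(n,k)$ possibly requiring a small ad hoc modification of $I^{\ast\ast}$; turning this heuristic into a case-free argument appears to require simultaneous induction on $n$ together with a sharper form of Conjecture \ref{conjecture on consecutive I-s}, controlling how the feasibility intervals $L(I)$ deform under the transition $n\mapsto n+1$. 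This coupling of conjectures is, I expect, the central difficulty.
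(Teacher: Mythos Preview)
The statement you are addressing is presented in the paper as a \emph{conjecture}, not as a theorem: the paper offers no proof, only the numerical evidence in Tables~\ref{spreadk=4}--\ref{spread k=10}. So there is no ``paper's own proof'' to compare against; the relevant question is whether your proposal actually closes the problem.

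It does not, and you essentially say so yourself. The core step --- that replacing $n$ by $n+1$ in a realizable basis $I^{\ast}$ of maximum spread yields a realizable basis $I^{\ast\ast}$ for $(n+1,k)$ --- is exactly the unproven assertion. Dual feasibility of $I^{\ast\ast}$ is immediate from Theorem~\ref{pre sec, theorem on I}, but realizability (i.e., $L(I^{\ast\ast})$ having positive length) is not. The coefficients $b_{s,t}$ in \eqref{pre sec, equation of elements of matrix B} depend on $n$ through every factor $(n-\alpha)$ and $(n-s)$, so the perturbation $n\mapsto n+1$ shifts all $k+1$ polynomials $v_i(p)$ simultaneously; there is no monotonicity principle in the paper (nor do you supply one) guaranteeing that their common nonnegativity region does not collapse. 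Your appeal to Proposition~\ref{result, proposition for B(n,l,p)} is only heuristic: that proposition controls a single Bonferroni polynomial on a specific interval, not the intersection of $k+1$ sign conditions for an arbitrary basis configuration.

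Your backup plan compounds the difficulty rather than resolving it. Theorem~\ref{pre sec, theorem on basis change} tells you when a basic variable stays nonnegative, not how to locate a nearby realizable basis of at least the same spread; a single swap on a paired column can just as well \emph{decrease} the spread. And invoking Conjectures~\ref{conjecture on consecutive I-s} and~\ref{conjecture on the symmetry of I} to cut cases means your argument would at best be conditional on other open conjectures from the same section. In short, what you have written is a plausible strategy sketch with the decisive lemma missing, which is consistent with the paper's decision to leave the statement as a conjecture.
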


Tables \ref{spreadk=4}, \ref{spread k=6}, \ref{spread k=8}, \ref{spread k=10} list $\mathrm{Msp}(n,4)$, $\mathrm{Msp}(n,6)$, $\mathrm{Msp}(n,8)$, and $\mathrm{Msp}(n,10)$, respectively, for the ranges of $n$ in our \tcb{collected data}.

\begin{conjecture}\label{obs13}\emph{
\begin{equation}\label{conjecture sec, formula for spread for k=4}
\mathrm{Msp}(n,4)= \left\lfloor\sqrt{n-2}\right\rfloor+2, \qquad n\geq 6.    
\end{equation}
}
\end{conjecture}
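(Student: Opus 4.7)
The plan is, for each dual feasible basis $I=\{a_1,a_1+1,a_2,a_2+1,n\}$ with $k=4$ and $a_2\geq a_1+2$, to compute the five polynomials $v_{a_1}(p)$, $v_{a_1+1}(p)$, $v_{a_2}(p)$, $v_{a_2+1}(p)$, $v_n(p)$ explicitly, and then to characterize realizability $L(I)\neq\emptyset$ in terms of the spread $s=a_2+1-a_1$. First I would plug the index set $I$ into formulas (\ref{pre sec, equation of elements of matrix B}) and (\ref{proof sec, equation of vi second interval}) to obtain each $v_i(p)$ as a polynomial of degree at most $4$ in $p$, with coefficients rational in $a_1,a_2,n$. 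Following the pattern used in the proofs of Theorems \ref{resuls sec, theorem on second interval 1} and \ref{resuls sec, theorem on second last interval 1}, these expressions simplify via Lemma \ref{prelim, lemma for Bonferroni polynomial} into combinations of Bonferroni polynomials, and $v_n(p)$ in particular emerges in a clean form. Proposition \ref{result, proposition for B(n,l,p)} will then be the main tool for controlling signs of the resulting polynomials on $[0,1]$.

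Next I would identify the binding constraints. The \emph{corner} quantities $v_{a_1}(p)$ and $v_{a_2+1}(p)$ are non-negative on comparatively large sub-intervals of $[0,1]$, while $v_n(p)$ is the objective and is positive throughout. The crucial constraints are on the two \emph{inner} coordinates $v_{a_1+1}(p)$ and $v_{a_2}(p)$: each of these is a polynomial whose unique root in $[0,1]$ depends monotonically on $s$. As $s$ grows, the intervals $\{p : v_{a_1+1}(p)\geq 0\}$ and $\{p : v_{a_2}(p)\geq 0\}$ move in opposite directions and their intersection shrinks. The basis becomes non-realizable precisely when this intersection has empty interior; an analysis of the relevant discriminant should reduce, after cancellation, to an inequality of the form $(s-2)^{2}\leq n-2$, yielding the upper bound $\mathrm{Msp}(n,4)\leq\lfloor\sqrt{n-2}\rfloor+2$.

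For the matching lower bound, I would exhibit, for every $n\geq 6$, a concrete realizable basis of spread $\lfloor\sqrt{n-2}\rfloor+2$. A natural candidate, motivated by Conjecture \ref{conjecture on the symmetry of I}, is the basis that is nearly symmetric about $(n-1)/2$: setting $s_0=\lfloor\sqrt{n-2}\rfloor+2$, take $a_1=\lfloor(n-s_0)/2\rfloor$ and $a_2=a_1+s_0-1$. One then verifies directly that all five non-negativity constraints hold on an explicit sub-interval containing $p=1/2$; by Proposition \ref{result, proposition for B(n,l,p)}, the Bonferroni building blocks that appear in the simplified $v_i$-formulas are all positive there, which should suffice.

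The main obstacle will be twofold. First, even after exploiting Lemma \ref{prelim, lemma for Bonferroni polynomial} and the combinatorial identities in Section \ref{k-wise proof section}, the inner polynomials $v_{a_1+1}$ and $v_{a_2}$ are genuinely of degree $4$ in $p$ with the spread $s$ appearing through several binomial coefficients simultaneously, so isolating the dominant term that produces the $\sqrt{n-2}$ threshold requires careful tracking of leading orders in $s$ and $n$. Second, the floor function $\lfloor\sqrt{n-2}\rfloor$ forces a case split depending on the fractional part of $\sqrt{n-2}$, in particular on whether $n-2$ is a perfect square; ensuring both bounds are tight in these boundary cases will demand a delicate separate analysis, and I expect this (rather than the algebra of the $v_i$-s) to be the most technical part of the proof.
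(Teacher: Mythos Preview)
The paper does not prove this statement. Conjecture~\ref{obs13} is one of the eight open conjectures in Section~\ref{numerical data}, supported only by the numerical data in Table~\ref{spreadk=4}; the authors explicitly say the collected data does not even suggest a simple formula for $k\geq 6$. So there is no ``paper's own proof'' to compare your proposal against.

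On the merits of your plan: the overall strategy is reasonable, but several steps are more speculative than you acknowledge. First, your assertion that the binding constraints are precisely the two inner coordinates $v_{a_1+1}$ and $v_{a_2}$, with the outer ones $v_{a_1}$, $v_{a_2+1}$ slack on large intervals, is not established anywhere in the paper and is not obvious; for the bases handled explicitly in Theorems~\ref{resuls sec, theorem on second interval 1} and~\ref{resuls sec, theorem on second last interval 1} the pattern of which $v_i$ vanishes at the endpoints is more varied than this. Second, the claim that the relevant discriminant ``should reduce, after cancellation, to an inequality of the form $(s-2)^2\leq n-2$'' is the entire content of the conjecture; you have not indicated any mechanism by which the degree-$4$ polynomials in $p$ with coefficients depending on $a_1,a_2,n$ collapse to a single quadratic inequality in $s$ and $n$. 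Third, your argument tacitly assumes that each $L(I)$ is an interval, which is itself the unproven Conjecture~\ref{conjecture on the order of I}. Without that, ``the intersection shrinks and becomes empty'' is not a well-posed condition. In short, what you have is a plausible outline, but the decisive algebraic reduction that would turn it into a proof is missing.
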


Naturally, as $k$ grows, the function $\mathrm{Msp}(n,k)$, considered as a function of $n$, grows faster. However, the \tcb{collected data does not give such a simple formula as in} \eqref{conjecture sec, formula for spread for k=4}. Nor is it enough to guess whether, for a fixed $k\geq 6$, the function grows as $\Theta(\sqrt{n})$ or faster.

\begin{subtables}
\begin{table}[H] 
    \centering
\begin{tabular}{| C{2cm}| C{2cm} |C{2cm} |C{2cm} | C{2cm} |C{2cm} |}
 \hline
$\mathrm{Msp}(n,4)$ & Range of $n$ & $\mathrm{Msp}(n,4)$ &Range of $n$& $\mathrm{Msp}(n,4)$& Range of $n$\\
 \hline
~4 &~[6,10]&11&~[83,101]&18&[258,290]\\
\hline
~5 &[11,17]&12&[102,122]&19&[291,325]\\
\hline
~6 &[18,26]&13&[123,145]&20&[326,362]\\
\hline
~7 &[27,37]&14&[146,170]&21&[363,401]\\
\hline
~8 &[38,50]&15&[171,197]&22&[402,442]\\
\hline
~9 &[51,65]&16&[198,226]&23&[443,485]\\
\hline
10 &[66,82]&17&[227,257]&24&[486,500]\\
\hline
\end{tabular}
\vspace{0.25 cm}
\caption{The spread for $6\leq n\leq 500$ and $k=4$.}
\label{spreadk=4}
\end{table}
\begin{table}[H] 
    \centering
\begin{tabular}{| C{2cm}| C{2cm} |C{2cm} |C{2cm} | C{2cm} |C{2cm} |}
 \hline
$\mathrm{Msp}(n,6)$ & Range of $n$ & $\mathrm{Msp}(n,6)$ &Range of $n$& $\mathrm{Msp}(n,6)$& Range of $n$\\
 \hline
~6 &~~~[8,9]&13&~[42,49]&20&[110,121]\\
\hline
~7 &[10,13]&14&~[50,57]&21&[122,135]\\
\hline
~8 &[14,17]&15&~[58,67]&22&[136,148]\\
\hline
~9 &[18,23]&16&~[68,76]&23&[149,163]\\
\hline
10 &[24,28]&17&~[77,87]&24&[164,177]\\
\hline
11 &[29,35]&18&~[88,97]&25&[178,193]\\
\hline
12 &[36,41]&19&[98,109]&26&[194,200]\\
\hline
\end{tabular}
\vspace{0.25 cm}
\caption{The spread for $8\leq n\leq 200$ and $k=6$.}
\label{spread k=6}
\end{table}
\begin{table}[H]
    \centering
\begin{tabular}{| C{2cm}| C{2cm} |C{2cm} |C{2cm} | C{2cm} |C{2cm} |}
 \hline
$\mathrm{Msp}(n,8)$ & Range of $n$ & $\mathrm{Msp}(n,8)$ &Range of $n$& $\mathrm{Msp}(n,8)$& Range of $n$\\
 \hline
~8 &[10,11]&14&[29,32]&20&~[62,67]\\
\hline
~9 &[12,13]&15&[33,37]&21&~[68,75]\\
\hline
10 &[14,17]&16&[38,43]&22&~[76,82]\\
\hline
11 &[18,20]&17&[44,48]&23&~[83,90]\\
\hline
12 &[21,23]&18&[49,54]&24&~[91,98]\\
\hline
13 &[24,28]&19&[55,61]&25&[99,100]\\
\hline
\end{tabular}
\vspace{0.25 cm}
\caption{The spread for $10\leq n\leq 100$ and $k=8$.}
\label{spread k=8}
\end{table}
\begin{table}[H] 
    \centering
\begin{tabular}{| C{2cm}| C{2cm} |C{2cm} |C{2cm} | C{2cm} |C{2cm} |}
 \hline
$\mathrm{Msp}(n,10)$ & Range of $n$ & $\mathrm{Msp}(n,10)$ &Range of $n$& $\mathrm{Msp}(n,10)$& Range of $n$\\
 \hline
10 &[12,12]&14&[21,23]&18&[35,37]\\
\hline
11 &[13,14]&15&[24,26]&19&[38,42]\\
\hline
12 &[15,17]&16&[27,30]&20&[43,46]\\
\hline
13 &[18,20]&17&[31,34]&21&[47,50]\\
\hline
\end{tabular}
\vspace{0.25 cm}
\caption{The spread for $12\leq n\leq 50$ and $k=10$.}
\label{spread k=10}
\end{table}
\end{subtables}

\tcb{The numerical studies} suggest that
\begin{equation}\label{conjecture section equation of L(i)}
|L(I_1)|= |L(I_N)|>\max\{|L(I_i)|\, :\, i=2,3,\ldots, N-1\}.    
\end{equation}
\tcb{The equality in \eqref{conjecture section equation of Sp} and the inequality in \eqref{conjecture section equation of L(i)} raise the following question.} 
\begin{question}\emph{
For fixed $n$ and $k$, are the sequences  $(\mathrm{Sp}(I_i))_{i=1}^N$ and $(|L(I_i)|)_{i=1}^N$ negatively correlated?}
\end{question}
In Figure \ref{fig:exampe1 correlations}, we have plotted the correlation coefficients as a function of $n$ for~$k=4,6,8,10$. Note that the correlation coefficient is usually negative for $k=4$ and is negative throughout for $k=6,8,10$.  However, it seems to converge to $0$ as $n\to \infty$.

\begin{figure}[H]
\centering
\subfloat[$k=4$.]{\label{fig:a}\includegraphics[width=0.480\linewidth]{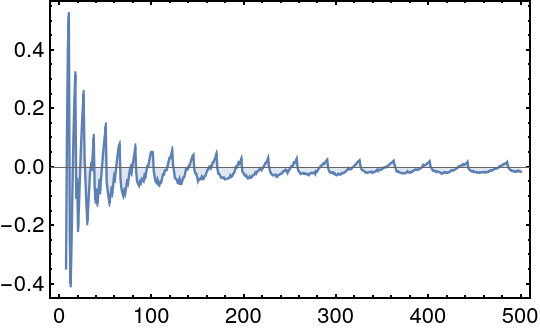}}\quad
\subfloat[$k=6$.]{\label{fig:b}\includegraphics[width=0.480\linewidth]{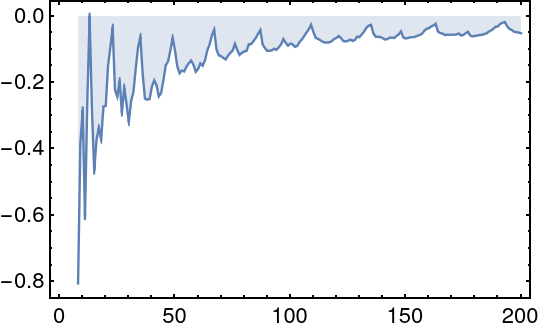}}\\
\subfloat[$k=8$.]{\label{fig:c}\includegraphics[width=0.480\textwidth]{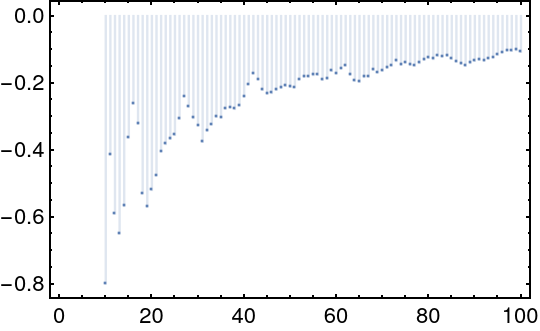}}\quad
\subfloat[$k=10$.]{\label{fig:d}\includegraphics[width=0.480\textwidth]{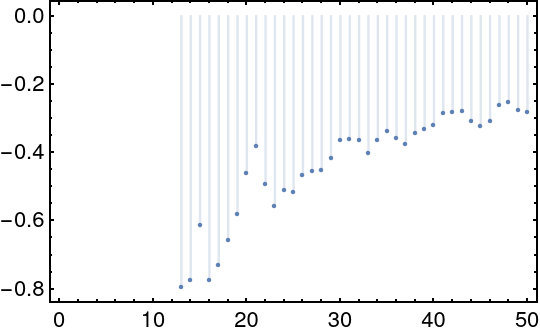}}
\caption{ Correlation between the spread of realizable bases and the lengths of the corresponding intervals.}
\label{fig:exampe1 correlations}
\end{figure}
\vspace{0.25 cm}
\tcb{As we have previously noted, we hope that the above conjectures 
will help future researchers with the very difficult task of deriving further exact formulas for $M(n,k,p)$ over the whole interval $[0,1]$.}
\bigskip

\noindent \tcb{\textbf{Acknowledgements}: We wish to thank J. Dudek, R. Pemantle, and A. Yadin for helpful discussions:}

\newpage
\bibliographystyle{plain}

\end{document}